\newtheorem{theorem}{Theorem}[section]
\newtheorem{lemma}{Lemma}[section]
\newcommand{\norm}[1]{\left\|#1\right\|}
\newcommand{\defeq}{\stackrel{\triangle}{=}}
\newtheorem{definition}[theorem]{Definition}
\newtheorem{assumption}[theorem]{Assumption}
\newtheorem{prop}{Proposition}
\newtheorem{remark}[theorem]{Remark}
\newcommand{\OMIT}[1]{}
\newif\ifpdf
\title{\LARGE \bf Distributed Convex Optimization for Continuous-Time Dynamics with Time-Varying Cost Functions}
\author{Salar Rahili, Student Member, IEEE, Wei Ren, Fellow, IEEE
\thanks{Salar Rahili and Wei Ren are with the Department of Electrical and Computer Engineering, University of California, Riverside, CA, 92521, USA.
       {Email: srahi001@ucr.edu, ren@ee.ucr.edu.} This work was supported by NSF under grants CMMI-1537729 and ECCS-1611423.}
}
\begin{document}
%

\maketitle

\begin{abstract}
In this paper, a time-varying distributed convex optimization problem is studied for continuous-time multi-agent systems.
The objective is to minimize the sum of local time-varying cost functions, each of which is known to only an individual agent, through local interaction. Here the optimal point is time varying and creates an optimal trajectory.
Control algorithms are designed for the cases of single-integrator and double-integrator dynamics. In both cases, a centralized approach is first introduced to solve the optimization problem.
Then this problem is solved in a distributed manner and a discontinuous algorithm based on the signum function is proposed in each case. In the case of single-integrator (respectively, double-integrator) dynamics, each agent relies only on its own position and the relative positions (respectively, positions and velocities) between itself and its neighbors. A gain adaption scheme is introduced in both algorithms to eliminate certain global information requirement.
To relax the restricted assumption imposed on feasible cost functions, an estimator based algorithm using the signum function is proposed, where each agent uses dynamic average tracking as a tool to estimate the centralized control input.
As a trade-off, the estimator based algorithm necessitates communication between neighbors.
Then in the case of double-integrator dynamics, the proposed algorithms are further extended.
Two continuous algorithms based on, respectively, a time-varying and a fixed boundary layer are proposed as continuous approximations of the signum function. To account for inter-agent collision for physical agents, a distributed convex optimization problem with swarm tracking behavior is introduced for both single-integrator and double-integrator dynamics. It is shown that the center of the agents tracks the optimal trajectory, the connectivity of the agents is maintained and inter-agent collision is avoided. Finally, numerical examples are included for illustration.
\end{abstract}


\IEEEpeerreviewmaketitle

\section{Introduction} \label{sec:introduction}
The distributed optimization problem has attracted a significant attention recently. It arises in many applications of multi-agent systems, where agents cooperate in order to accomplish various tasks as a team in a distributed and optimal fashion. We are interested in a class of distributed convex optimization problems, where the goal is to minimize the sum of local cost functions, each of which is known to only an individual agent.

The incremental subgradient algorithm is introduced as one of the earlier approaches addressing this problem \cite{D12, nowakJ}. In this algorithm an estimate of the optimal point is passed through the network while each agent makes a small adjustment on it. Recently some significant results based on the combination of consensus and subgradient algorithms have been published \cite{D8-11, D3-08, D4-10}. For example, this combination is used in \cite{D3-08} for solving the coupled optimization problems with a fixed undirected graph. A projected subgradient algorithm is proposed in \cite{D4-10}, where each agent is required to lie in its own convex set. It is shown that all agents can reach an optimal point in the intersection of all agents' convex sets even for a time-varying communication graph with doubly stochastic edge weight matrices.

However, all the aforementioned works are based on discrete-time algorithms. Recently, some new research is conducted on distributed optimization problems for multi-agent systems with continuous-time dynamics. Such a scheme has applications in motion coordination of multi-agent systems. For example, multiple physical vehicles modelled by continuous-time dynamics might need to rendezvous at a team optimal location. In \cite{tangJ}, a generalized class of zero-gradient sum controllers is introduced for twice differentiable strongly convex functions under an undirected graph.
In \cite{C4-13}, a continuous-time version of \cite{D4-10} for directed and undirected graphs is studied, where it is assumed that each agent is aware of the convex optimal solution set of its own cost function and the intersection of all these sets is nonempty. Article \cite{C7-12} derives an explicit expression for the convergence rate and ultimate error bounds of a continuous-time distributed optimization algorithm.
In \cite{C6-11}, a general approach is given to address the problem of distributed convex optimization with equality and inequality constraints. 
A proportional-integral algorithm is introduced in \cite{C5, C2-12, C2-14}, where \cite{C2-12} considers strongly connected weight balanced directed graphs and \cite{C2-14} extends these results using discrete-time communication updates. A distributed optimization problem is studied in \cite{C1} with the adaptivity and finite-time convergence properties.

In continuous-time optimization problems, the agents are usually assumed to have single-integrator dynamics. However, a broad class of vehicles requires double-integrator dynamic models. In addition, having time-invariant cost functions is a common assumption in the literature. However, in many applications the local cost functions are time varying, reflecting the fact that the optimal point could be changing over time and creates a trajectory. 
There are just a few works in the literature addressing the distributed optimization problem with time-varying cost functions \cite{Cheukuri,stochopt,ADMM}. In those works, there exist bounded errors converging to the optimal trajectory. For example, the economic dispatch problem for a network of power generating units is studied in \cite{Cheukuri}, where it is proved that the algorithm is robust to slowly time-varying loads. In particular, it is shown that for time-varying loads with bounded first and second derivatives the optimization error will remain bounded. In \cite{stochopt}, a distributed time-varying stochastic optimization problem is considered, where it is assumed that the cost functions are strongly convex, with
Lipschitz continuous gradients. It is proved that under the persistent excitation assumption, a bounded error in expectation will be achieved asymptotically.
In \cite{ADMM}, a distributed discrete-time algorithm based on the alternating direction method of multipliers (ADMM) is introduced to optimize a
time-varying cost function. It is proved that for
strongly convex cost functions with Lipschitz
continuous gradients, if the primal optimal solutions drift
slowly enough with time, the primal and dual variables are close to
their optimal values.

Furthermore, in all articles on distributed optimization mentioned above, the agents will eventually approach a common optimal point while in some applications it is desirable to achieve swarm behavior.
The goal of flocking or swarming with a leader is that a group
of agents tracks a leader with only local interaction while
maintaining connectivity and avoiding inter-agent collision \cite{Cucker07, olfati06, SuTAC09, caoren12}. 
Swarm tracking algorithms are studied in \cite{olfati06} and \cite{SuTAC09}, where it is assumed
that the leader is a neighbor
of all followers and has a constant and time-varying velocity, respectively. In \cite{caoren12}, swarm tracking algorithms via a variable structure approach are
introduced, where the leader is a neighbor of only a subset of
the followers. In the aforementioned studies, the leader plans
the trajectory for the team and the agents are not directly assigned to complete a task cooperatively. In \cite{TuSayed}, the agents are assigned a task to estimate a stationary field while exhibiting cohesive motions. Although optimizing a certain team criterion while performing the swarm behavior is a highly motivated task in many multi-agent applications, it has not been addressed in the literature.

The introduced framework, distributed continuous-time time-varying optimization, is of great significance in
motion coordination. Here, multiple agents cooperatively achieve motion coordination while optimizing a time-varying team objective function with only local information and interaction. For example, multiple spacecraft might need to dock at a moving location distributively with only local
information and interaction such that the total team performance is optimized. Multiple agents moving in a formation or swarm with local information and interaction might
need to cooperatively figure out what optimal trajectory the virtual leader or center of the team should follow
and that knowledge would help the individual agents specify their motions. Furthermore, there is a significant need to use distributed optimization in various applications such as economic dispatch, internet congestion control, and home automation with smart electrical devices. While the studies in the aforementioned applications would be simplified by assuming that the changing rate of the cost functions or the constraints, is small and hence treated as invariant in each time interval, it might be more realistic and relevant to explicitly take into account the time-varying nature of the cost functions or constraints. As a result, distributed continuous-time optimization algorithms with time-varying cost functions or constraints might serve as continuous-time solvers to figure out the optimal trajectory in these applications.

In this paper, we are faced with several challenges such as: 1) Having time-varying cost functions, which generally changes the problem from finding the fixed optimal point to tracking the optimal trajectory. 2) Solving the problem in a distributed manner using only local information and local interaction. 3) Solving the problem for continuous-time single-integrator and double-integrator dynamics, where in the latter case there is only direct control on agents' accelerations. 4)  In our algorithms, the signum function is employed to compensate for the effect of the inconsistent internal time-varying optimization signals among the agents so that the agents can reach consensus. As the signum function might cause chattering in some applications, it is replaced with continuous approximations in some algorithms but additional challenges in analysis would result from the replacement. 5) Providing analysis on optimization error bounds in scenarios where the agents' states cannot reach consensus. 6) The coexistence of the optimization objective and the inherent nonlinearity of the swarm tracking behavior. Our preliminary attempts for solving the distributed convex optimization problem with time-varying cost functions have been presented in \cite{ACC15salar, CDC15salar}.

The remainder of this paper is organized as follows: In Section \ref{sec:notation}, the notation and preliminaries used throughout this paper are introduced. In Section \ref{sec:single}, the case of single-integrator dynamics is studied. In Subsection \ref{seccent}, a centralized approach is introduced. Then, in Subsections \ref{subsecsingledis1} and \ref{secsingledis2}, two discontinuous algorithms are proposed to solve the problem in a distributed manner. In Section \ref{sec:double}, the case of double-integrator dynamics is studied. In Subsection \ref{subsecdoublecent}, a centralized algorithm is introduced. Then in Subsections \ref{subsecdbldis1} and  \ref{subsecdblestim} two discontinuous algorithms are defined to solve the problem in a distributed manner. Subsections \ref{subsecdblsgntimevar} and \ref{subsecdblsgninv} are devoted to extend the proposed discontinuous control algorithms. In the discontinuous algorithms, the signum function is used but it might cause chattering in some applications. Two continuous algorithms are proposed to avoid the chattering effect, where a time-varying and a time-invariant approximation of the signum function are employed in Subsections \ref{subsecdblsgntimevar} and \ref{subsecdblsgninv}, respectively. In Section \ref{sec:swarm}, the distributed convex optimization problem with swarm tracking behavior is studied, where two algorithms for single-integrator and double-integrator dynamics are designed in Subsections \ref{subsecswarmsingle} and \ref{subsecswarm}, respectively. Finally in Section \ref{sec:sim}, numerical examples are given for illustration.

\section{notations and preliminaries} \label{sec:notation}
The following notations are adopted throughout this paper. $\mathbb{R}^+$ denotes the set of positive real numbers. The cardinality of a set $S$ is denoted by $|S|$. ${\mathcal I}$ denotes the index set $\{1,...,N\}$; The transpose of matrix $A$ and vector $x$ are shown as $A^T$ and $x^T$, respectively. $\norm{x}_p$ denotes the p-norm of the vector $x$. We define $\text{sig}(z)^{\alpha}=|z|^{\alpha} \text{sgn}(z),$ where $z \in \mathbb{R}$ and $\alpha>0$. Let $\textbf{1}_n$ and $\textbf{0}_n$ denote the column vectors of $n$ ones and zeros, respectively. $I_n$ denotes the $n \times n$ identity matrix. For matrix $A$ and $B$, the Kronecker product is denoted by $A \otimes B$. The gradient and Hessian of function $f$ are denoted by $\nabla f$ and $H$, respectively. The matrix inequality $A>(\geq) B$, $A<(\leq) B$, $A> (\geq) 0$ and $A< (\leq) 0$ mean that $A-B$, $B-A$, $A$ and $-A$ are positive (semi)definite, respectively.  Let $\lambda_\text{min}[A]$ and $\lambda_\text{max}[A]$ denote, respectively, the smallest and the largest eigenvalue of the matrix $A$.

Let a triplet ${\mathcal G}=({\mathcal V},{\mathcal E},{\mathcal A})$ be an undirected graph, where ${\mathcal V}=\{1,...,N\}$ is the node set and ${\mathcal E} \subseteq {\mathcal V} \times {\mathcal V}$ is the edge set, and ${\mathcal A}=[a_{ij}] \in \mathbb{R}^{N\times N}$ is the adjacency matrix. An edge between agents $i$ and $j$, denoted by $e=(i,j) \in {\mathcal E}$, means that they can obtain information from each other.
In an undirected graph the edges $(i,j)$ and $(j,i)$ are equivalent. We assume $(i,i) \not\in {\mathcal E}$. The  adjacency matrix ${\mathcal A}$ is defined as $a_{ij}=a_{ji}=1$ if $(i,j) \in {\mathcal E}$ and $a_{ij}=0$ otherwise. 
The set of neighbors of agent $i$ is denoted by $N_i=\{j \in {\mathcal V}: (j,i) \in {\mathcal E} \}$.
A sequence of edges of the form $(i,j),(j,k),...,$ where $i,j,k \in {\mathcal V},$ is a path. The graph ${\mathcal G}$ is connected if there is a path from every node to every other node.
By arbitrarily assigning an orientation for the edges in ${\mathcal G}$, let $D = [d_{ik}] \in  \mathbb{R}^{N \times |{\mathcal E}|}$ be the \textit{incidence matrix} associated with ${\mathcal G}$, where $d_{ik} = -1$ if the edge $e_k$ leaves node $i$, $d_{ik} = 1$ if it enters node $i$, and $d_{ik} = 0$ otherwise.
Let the Laplacian matrix $L=[l_{ij}] \in \mathbb{R}^{N \times N}$ associated with the graph ${\mathcal G}$ be defined as $l_{ii} =\sum_{j=1,j \neq i}^N a_{ij}$ and $l_{ij}=-a_{ij}$ for $i \neq j$. Note that $L \triangleq D D^T$. The Laplacian matrix $L$ is symmetric positive semidefinite. The undirected graph ${\mathcal G}$ is connected if and only if $L$ has a simple zero eigenvalue with the corresponding eigenvector $\textbf{1}_N$ and all other eigenvalues are positive \cite{graphtheory}. When the graph ${\mathcal G}$ is connected, we order the eigenvalues of $L$ as $\lambda_1[L]=0 < \lambda_2[L] \leq ...\leq \lambda_N[L]$. Particularly, $\lambda_2[L]$ is the second smallest eigenvalue of the Laplacian matrix $L$.
The above notations can also be adopted for time-varying graphs, where ${\mathcal G} (t), {\mathcal A}(t), D(t)$ and $L(t)$ are, respectively, the undirected graph, the  adjacency matrix, the incidence matrix and the Laplacian matrix at time $t$. For the time-varying graph $\mathcal{G}(t)$, $\lambda_i[L(t)], \forall i \in \mathcal{I},$ is a function of $t$. As long as $\mathcal{G}(t)$ is connected, $\lambda_2[L(t)]$ is uniformly lower bounded above $0$ because there is only a finite number of possible $L(t)$ associated with $\mathcal{G}(t)$.

\begin{lemma} \cite{eig1} \label{eigvalue}
The second smallest eigenvalue $\lambda_2[L]$ of the Laplacian matrix $L$ associated with the undirected connected graph ${\mathcal G}$ satisfies $\lambda_2[L] = \min_{x^T \textbf{1}_N =0, x \neq \textbf{0}_N} \frac{x^T L x}{x^T x}$.
\end{lemma}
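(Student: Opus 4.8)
The plan is to invoke the spectral decomposition of the symmetric matrix $L$ and then reduce the constrained Rayleigh quotient to a problem posed entirely on the eigenspace orthogonal to $\textbf{1}_N$. First I would record the structural facts already established in the excerpt: since ${\mathcal G}$ is connected, $L$ is symmetric positive semidefinite with a \emph{simple} zero eigenvalue $\lambda_1[L]=0$ whose corresponding eigenvector is $\textbf{1}_N$, and the remaining eigenvalues satisfy $0 < \lambda_2[L] \le \cdots \le \lambda_N[L]$. By the spectral theorem I would fix an orthonormal basis of eigenvectors $v_1,\dots,v_N$ of $L$, with $v_1 = \textbf{1}_N/\sqrt{N}$ and $L v_i = \lambda_i[L]\, v_i$ for each $i$.

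Next I would treat an arbitrary feasible vector $x$, that is, one satisfying $x^T \textbf{1}_N = 0$ and $x \neq \textbf{0}_N$. Expanding $x = \sum_{i=1}^N c_i v_i$ in this basis, the constraint $x^T \textbf{1}_N = 0$ forces $c_1 = 0$, since $x^T v_1$ is proportional to $x^T \textbf{1}_N$. Hence $x = \sum_{i=2}^N c_i v_i$, and using orthonormality together with $L v_i = \lambda_i[L]\, v_i$ I would compute $x^T L x = \sum_{i=2}^N \lambda_i[L]\, c_i^2$ and $x^T x = \sum_{i=2}^N c_i^2 > 0$. Because $\lambda_i[L] \ge \lambda_2[L]$ for every $i \ge 2$, this yields the lower bound $\frac{x^T L x}{x^T x} \ge \lambda_2[L]$ uniformly over all feasible $x$.

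Finally I would verify that this bound is attained, so that the infimum is in fact a minimum equal to $\lambda_2[L]$. The eigenvector $v_2$ satisfies $v_2^T \textbf{1}_N = \sqrt{N}\, v_2^T v_1 = 0$ and $v_2 \neq \textbf{0}_N$, so it lies in the constraint set, and substituting it gives $\frac{v_2^T L v_2}{v_2^T v_2} = \lambda_2[L]$. Combining the lower bound from the previous step with this attained value establishes that $\min_{x^T \textbf{1}_N = 0,\, x \neq \textbf{0}_N} \frac{x^T L x}{x^T x} = \lambda_2[L]$, as claimed.

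As for difficulty, this is essentially the Courant--Fischer characterization specialized to the graph Laplacian, so I do not anticipate a serious obstacle. The only point genuinely requiring care is the justification that the linear constraint $x^T \textbf{1}_N = 0$ coincides exactly with the condition $c_1 = 0$ that removes the zero-eigenvalue direction; this hinges on the simplicity of $\lambda_1[L] = 0$, which is guaranteed precisely by the connectedness of ${\mathcal G}$ as noted in the excerpt.
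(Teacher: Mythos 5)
Your argument is correct. Note that the paper itself offers no proof of this lemma---it is simply cited from an external reference---so there is nothing internal to compare against; your spectral-decomposition argument (expand $x$ in an orthonormal eigenbasis with $v_1=\textbf{1}_N/\sqrt{N}$, observe the constraint kills the $c_1$ component, bound the Rayleigh quotient below by $\lambda_2[L]$, and exhibit $v_2$ as a minimizer) is the standard Courant--Fischer proof and is sound. One small remark: the simplicity of the zero eigenvalue is not actually essential to the identity itself---what matters is only that $\textbf{1}_N$ is an eigenvector for $\lambda_1=0$, so that the orthogonal complement of $\textbf{1}_N$ is spanned by $v_2,\dots,v_N$; this holds for any graph Laplacian, connected or not. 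Connectedness is what makes $\lambda_2[L]>0$ and hence makes the lemma useful in the paper's stability estimates, but your proof would go through verbatim without it.
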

\begin{lemma}  \label{lem-intro-gradzero}
Let $f(x):  \mathbb{R}^{m} \rightarrow  \mathbb{R}$ be a continuously differentiable convex function. The function $f(x)$ is minimized at $x^*$ if and only if $\nabla f(x^*)=0$ \cite{bazaraa}. Furthermore, for any strictly convex function $h(x):  \mathbb{R}^{m} \rightarrow  \mathbb{R}$, the optimal solution $x^*$, assuming that it exists, is unique \cite{boyd04}. 
\end{lemma}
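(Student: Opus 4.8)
The plan is to treat the lemma as two separate claims: the first-order characterization of a minimizer (an ``if and only if''), and the uniqueness statement for strictly convex functions. For the characterization I would prove the two implications independently, being careful to track exactly where convexity is needed, and for uniqueness I would argue by contradiction using the defining strict inequality.

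For the necessity direction (if $x^*$ minimizes $f$ then $\nabla f(x^*)=0$), no convexity is required, only differentiability. First I would fix an arbitrary direction $v \in \mathbb{R}^m$ and introduce the scalar function $g(t) = f(x^* + t v)$. Since $x^*$ is a global minimizer of $f$, the point $t=0$ is a minimizer of the differentiable function $g$, so the standard one-dimensional first-order condition gives $g'(0) = \nabla f(x^*)^T v = 0$. Because $v$ is arbitrary, choosing $v = \nabla f(x^*)$ yields $\norm{\nabla f(x^*)}^2 = 0$, hence $\nabla f(x^*) = 0$. For the sufficiency direction (if $\nabla f(x^*)=0$ then $x^*$ minimizes $f$), convexity enters through the first-order characterization of a continuously differentiable convex function, namely $f(y) \geq f(x) + \nabla f(x)^T (y - x)$ for all $x, y \in \mathbb{R}^m$. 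Setting $x = x^*$ and using $\nabla f(x^*) = 0$ gives $f(y) \geq f(x^*)$ for every $y$, so $x^*$ is a global minimizer.

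For the uniqueness claim, I would suppose for contradiction that a strictly convex $h$ admits two distinct minimizers $x_1^* \neq x_2^*$ attaining the common value $h^* = h(x_1^*) = h(x_2^*)$. By strict convexity applied to the midpoint, $h\!\left(\tfrac{1}{2} x_1^* + \tfrac{1}{2} x_2^*\right) < \tfrac{1}{2} h(x_1^*) + \tfrac{1}{2} h(x_2^*) = h^*$, which contradicts the minimality of $h^*$. Therefore the minimizer, when it exists, is unique.

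I do not expect a genuine obstacle here, as the result is a classical fact of convex analysis; the only delicate point is bookkeeping about where convexity is invoked. The single nontrivial ingredient is the gradient inequality used in the sufficiency step, which is itself the standard first-order property of a differentiable convex function and may either be cited as in \cite{bazaraa} or derived directly from the definition of convexity by letting the convex-combination parameter tend to the endpoint. Everything else reduces to elementary calculus and a one-line strict-convexity estimate.
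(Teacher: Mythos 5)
Your proof is correct. Note, however, that the paper does not actually prove this lemma: it is stated as a known fact with citations to standard convex-analysis references, so there is no in-paper argument to compare against. Your self-contained treatment is the standard one — interior first-order condition along lines for necessity, the gradient inequality $f(y)\geq f(x^*)+\nabla f(x^*)^T(y-x^*)$ for sufficiency, and the midpoint contradiction for uniqueness under strict convexity — and each step is sound, including the observation that convexity is needed only in the sufficiency direction.
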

\begin{lemma} \cite{boyd} \label{schur}
The symmetric real matrix $\left( {\begin{array}{cc} Q & S \\ S^T & R \\ \end{array} } \right)$ is positive definite if and only if one of the following conditions hold:
(i) $Q> 0, R - S^T Q^{-1} S > 0$; or (ii) $R >0, Q - S R^{-1} S^T > 0.$
\end{lemma}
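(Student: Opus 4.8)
The plan is to reduce positive definiteness of the block matrix to that of its diagonal blocks by means of a congruence (block-triangular) factorization. Write $M = \left( \begin{array}{cc} Q & S \\ S^T & R \end{array} \right)$. I will prove the equivalence in (i); part (ii) then follows by the identical argument with the roles of the two diagonal blocks interchanged, that is, by conjugating $M$ with the block permutation that swaps the two coordinate blocks.

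For the sufficiency direction of (i), assume $Q > 0$ (so $Q^{-1}$ exists) and $R - S^T Q^{-1} S > 0$. I would verify the block identity
$$ M = \left( \begin{array}{cc} I & 0 \\ S^T Q^{-1} & I \end{array} \right) \left( \begin{array}{cc} Q & 0 \\ 0 & R - S^T Q^{-1} S \end{array} \right) \left( \begin{array}{cc} I & Q^{-1} S \\ 0 & I \end{array} \right) . $$
Since $Q$ is symmetric, the two outer factors are transposes of one another; writing $T$ for the rightmost factor (upper triangular with unit diagonal, hence invertible) and $\Lambda$ for the middle factor gives $M = T^T \Lambda T$, so $M$ is congruent to the block-diagonal matrix $\Lambda$. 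A symmetric block-diagonal matrix is positive definite precisely when each block is, so $\Lambda > 0$ under the hypotheses; and for invertible $T$ one has $v^T M v = (Tv)^T \Lambda (Tv) > 0$ for all $v \neq 0$, whence $M > 0$.

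For the necessity direction, suppose $M > 0$. Evaluating the quadratic form on vectors $v = (x^T, 0)^T$ gives $x^T Q x > 0$ for all $x \neq 0$, so $Q > 0$ and $Q^{-1}$ exists. The same factorization $M = T^T \Lambda T$ then applies, and since $T$ is invertible, $\Lambda = (T^{-1})^T M T^{-1} > 0$; reading off its lower-right block yields $R - S^T Q^{-1} S > 0$. This establishes (i).

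The one point requiring care is that the Schur complement $R - S^T Q^{-1} S$ only makes sense once $Q$ is known to be invertible, so in the necessity direction I first extract $Q > 0$ from $M > 0$ by restricting the quadratic form to the subspace $\{(x^T, 0)^T\}$ before introducing $Q^{-1}$. Everything else is the mechanical verification of the block identity together with the invariance of positive definiteness under congruence (Sylvester's law of inertia); these are routine, so I expect no substantial obstacle.
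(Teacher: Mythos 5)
The paper offers no proof of this lemma; it is quoted directly from the cited reference \cite{boyd}, so there is nothing internal to compare against. Your argument is correct and is the standard one (and the one given in that reference): the block $LDL^T$ congruence identity checks out, the outer factors are indeed mutual transposes because $Q^{-1}$ is symmetric, and you correctly handle the only delicate point by first extracting $Q>0$ from $M>0$ via vectors of the form $(x^T,0)^T$ before invoking $Q^{-1}$. Part (ii) by swapping the blocks is likewise fine.
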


\section{Time-Varying Convex Optimization For Single-Integrator Dynamics} \label{sec:single}

Consider a multi-agent system consisting of $N$ physical agents with an interaction topology described by the undirected graph ${\mathcal G}$. It is common to adopt single-integrator or double-integrator models. Here, suppose that the agents satisfy the continuous-time single-integrator dynamics
\begin{equation} \label{single}
 \dot{x}_i(t) =u_i(t), \quad i \in \mathcal{I},
\end{equation}
where $x_i(t) \in  \mathbb{R}^{m}$ is the position, and $u_i(t) \in  \mathbb{R}^{m}$ is the control input of agent $i$. Note that $x_i(t)$ and $u_i(t)$ are functions of time. Later for ease of notation we will write them as $x_i$ and $u_i$. A time-varying local cost function $f_i :  \mathbb{R}^{m}\times  \mathbb{R}^{+} \rightarrow  \mathbb{R}$ is assigned to agent $i\in {\mathcal I},$ which is known to only agent $i$. The team cost function is denoted by $\sum_{i=1}^N f_i(x,t)$ and assumed to be convex. Note that here only $\sum_{i=1}^N f_i(x,t)$ is required to be convex but not necessarily each $f_i(x,t)$. Our objective is to design $u_i$ for \eqref{single} using only local information and local interaction with neighbors such that all agents track the optimal state $x^*(t)$, where $x^*(t)$ is the minimizer of the time-varying convex optimization problem
\begin{equation} \label{xstar1}
\min_{x \in  \mathbb{R}^m} \sum_{i=1}^N f_i(x,t).
\end{equation} 
 \begin{assumption} \label{as-exist}
There exists a continuous $x^*(t)$ that minimizes the team cost function $\sum_{i=1}^N f_i(x,t)$.
\end{assumption}

Because the inverse of the Hessian will be used in our algorithm, we need one of the following assumptions to guarantee its existence.
\begin{assumption} \label{as1}
The function $\sum_{i=1}^N f_i(x,t)$ is twice continuously differentiable with respect to $x,$ with invertible Hessian $\sum_{j=1}^N H_j(x_,t),\ \forall x,t$.
\end{assumption}
\begin{assumption} \label{as1each}
Each function $f_i(x,t)$ is twice continuously differentiable with respect to $x,$ with invertible Hessian $H_i(x,t),\ \forall x,t$.
\end{assumption}
\subsection{Centralized Time-Varying Convex Optimization} \label{seccent}

As a first step in this subsection, we focus on the time-varying convex optimization problem of 
\begin{equation} \label{fcent}
\min_{x} f_0(x,t),
\end{equation}
where $f_0 :  \mathbb{R}^{m}\times  \mathbb{R}^{+} \rightarrow  \mathbb{R}$ is convex in $x$, for single-integrator dynamics \begin{equation} \label{singlemodelcent}
 \dot{x} =u,
\end{equation}
where $x, u \in  \mathbb{R}^{m}$ are the system's state and control input, respectively. Next, an algorithm adapted from \cite{time-varying} will be proposed to solve the problem defined by (\ref{fcent}) for the system \eqref{singlemodelcent}.  The control input is proposed for \eqref{singlemodelcent} as
\begin{equation} \label{ucent} 
u= -H_0^{-1}(x,t)(\tau \nabla f_0(x,t)+ \frac{\partial }{\partial t}\nabla f_0(x,t)), 
\end{equation}
where $\tau >0$ is a positive coefficient; $ \nabla f_0(x,t)$ and $H_0(x,t)$ are respectively, the first and the second  derivative of the cost function $f_0(x,t)$ with respect to $x$, namely, the gradient and Hessian.

\begin{theorem} \label{Theoremsinglecentral}
Suppose that $f_0$ satisfies Assumptions \ref{as-exist} and \ref{as1}. Using \eqref{ucent} for \eqref{singlemodelcent}, $x(t)$ converges to the optimal trajectory $x_0^*(t)$, the minimizer of \eqref{fcent}, i.e., $\lim_{t \to \infty} [x(t)-x_0^*(t)] = 0$.
\end{theorem}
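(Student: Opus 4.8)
The plan is to follow the state $x(t)$ only indirectly: I would monitor the gradient $\nabla f_0(x(t),t)$ evaluated along the closed-loop trajectory, show that it obeys a simple decoupled linear ODE and hence decays to zero, and then translate this into convergence of $x(t)$ to the minimizer. The starting point is the first-order optimality characterization: since $f_0(\cdot,t)$ is convex and, by Assumption~\ref{as1}, has an invertible Hessian, it is in fact strictly convex, so Lemma~\ref{lem-intro-gradzero} guarantees that $x_0^*(t)$ is the unique minimizer and satisfies $\nabla f_0(x_0^*(t),t)=0$ for all $t$. Assumption~\ref{as-exist} makes this trajectory well defined.

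The key computation is to differentiate $g(t)\defeq\nabla f_0(x(t),t)$ in time along the system \eqref{singlemodelcent} driven by the control \eqref{ucent}. By the chain rule,
\[
\dot{g}(t)=H_0(x,t)\,\dot{x}+\frac{\partial}{\partial t}\nabla f_0(x,t).
\]
Substituting $\dot{x}=u$ from \eqref{ucent}, the Hessian cancels against its inverse and the two partial-time-derivative terms cancel, leaving the clean linear system
\[
\dot{g}(t)=-\tau\,g(t),
\]
whose solution is $g(t)=e^{-\tau t}g(0)$. Thus $\nabla f_0(x(t),t)\to 0$ exponentially fast, at a rate governed by the design gain $\tau$. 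This is the heart of the argument and is essentially algebraic once the chain rule is written out.

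The remaining step is to convert gradient convergence into state convergence. Using $\nabla f_0(x_0^*,t)=0$ together with the integral mean-value identity for the gradient,
\[
g(t)=\nabla f_0(x,t)-\nabla f_0(x_0^*,t)=\bar{H}(t)\,\big(x(t)-x_0^*(t)\big),\qquad \bar{H}(t)\defeq\int_0^1 H_0\big(x_0^*+s(x-x_0^*),t\big)\,ds,
\]
where $\bar{H}(t)$ is positive definite (an average of positive definite Hessians), I would invert to obtain the bound $\norm{x(t)-x_0^*(t)}\le \norm{g(t)}/\lambda_\text{min}[\bar{H}(t)]$, whose right-hand side vanishes as $t\to\infty$. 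I expect the \emph{main obstacle} to lie precisely here: the exponential decay $\norm{g(t)}\to 0$ only forces $\norm{x(t)-x_0^*(t)}\to 0$ if $\lambda_\text{min}[\bar{H}(t)]$ is bounded away from zero uniformly in $t$. Assumption~\ref{as1} provides only pointwise invertibility, not a uniform lower bound on the Hessian spectrum, so to make the last step rigorous I would either strengthen the hypothesis to uniform (strong) convexity of $f_0$ in $x$, or argue that $x(t)$ remains in a compact set on which such a bound is automatic.
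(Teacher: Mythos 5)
Your proposal is essentially the paper's own argument: the paper takes $W=\frac{1}{2}\nabla f_0(x,t)^T\nabla f_0(x,t)$ and computes $\dot{W}=-\tau\,\nabla f_0(x,t)^T\nabla f_0(x,t)=-2\tau W$, which is exactly your linear ODE $\dot{g}=-\tau g$ written in Lyapunov form, and then concludes via Lemma~\ref{lem-intro-gradzero} and Assumption~\ref{as-exist}. Your closing caveat is well taken: the paper's final step (``it is easy to see that $x(t)$ converges to $x_0^*(t)$'') silently requires precisely the uniform lower bound on the Hessian spectrum (or a compactness/strong-convexity argument) that you identify, since pointwise invertibility of $H_0$ alone does not let one pass from $\nabla f_0(x(t),t)\to 0$ to $x(t)-x_0^*(t)\to 0$.
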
\normalsize\
\begin{proof}
Define the positive-definite Lyapunov function candidate $W=\frac{1}{2} \nabla f_0(x,t)^T \nabla f_0(x,t)$. The derivative of $W$ along the system \eqref{singlemodelcent} with the control input \eqref{ucent} is $\dot{W}= \nabla f_0(x,t)^T H_0(x,t) \dot{x} +\nabla f_0(x,t)^T \frac{\partial}{\partial t} \nabla  f_0(x,t) = -\tau \nabla f_0(x,t)^T \nabla f_0(x,t)$.
Therefore, $\dot{W}<0$ for $\nabla f_0 \neq 0$. This guarantees that $\nabla f_0$ will asymptotically converge to zero when $t \to \infty$. Then by using Lemma \ref{lem-intro-gradzero} and under Assumption \ref{as-exist}, it is easy to see that $x(t)$ converges to $x_0^*(t)$, and $f_0$ will be minimized.        
\end{proof}
\begin{remark}
There exist other choices for the control input $u$ instead of the one proposed in (\ref{ucent}). For example, $u=-\tau \nabla f_0(x,t)-H_0^{-1}(x,t)\frac{\partial }{\partial t}\nabla  f_0(x,t)$ might be used. In this alternative control input, it can be seen that for a time-invariant cost function, $\frac{\partial}{\partial t} \nabla  f_0(x,t) =0$. Hence we will have the well-known gradient descent algorithm. For a time-invariant cost function, the proposed algorithm \eqref{ucent} will become a Newton algorithm, which is generally much faster than the gradient descent algorithm.
\end{remark}

The results from Theorem \ref{Theoremsinglecentral} can be extended to minimize the convex function $\sum_{i=1}^N f_i(x,t)$. If Assumptions \ref{as-exist} and \ref{as1} hold, with
\begin{equation}\small \label{sum-min} 
u=({\sum_{j=1}^N H_j(x,t)})^{-1}  \big(\tau \sum_{j=1}^N\nabla f_j(x,t)+ \sum_{j=1}^N\frac{\partial}{\partial t} \nabla  f_j(x,t)\big)
\end{equation} \normalsize
for \eqref{singlemodelcent}, the function $\sum_{i=1}^N f_i(x,t)$ is minimized. Unfortunately, \eqref{sum-min} is a centralized solution for agents with single-integrator dynamics relying on the knowledge of all $f_i, i\in \mathcal{I}$. 
In Subsections \ref{subsecsingledis1} and \ref{secsingledis2}, \eqref{sum-min} will be exploited to propose two algorithms for solving the time-varying convex optimization problem for single-integrator dynamics in a distributed manner.
\subsection{Distributed Time-Varying Convex Optimization Using Neighbors' Positions} \label{subsecsingledis1}

In this subsection, we focus on solving the distributed time-varying convex optimization problem \eqref{xstar1} for agents with single-integrator dynamics \eqref{single}. Each agent has access to only its own position and the relative positions between itself and its neighbors. In some applications, the relative positions can be obtained by using only agents' local sensing capabilities, which might in turn eliminate the communication necessity between agents. The problem defined in \eqref{xstar1} is equivalent to
\begin{equation} \label{costdis}
\min_{x_i} \sum_{i=1}^N f_i(x_i,t) \ \text{subject to} \ x_i=x_j,  \ \  \forall i,j \in {\mathcal I}.
\end{equation} 
Intuitively, the problem is deformed as a consensus problem and a minimization problem on the team cost function $\sum_{i=1}^N f_i(x_i,t)$. Here the goal is that the states $x_i(t),  \forall i \in {\mathcal I},$ converge to the optimal trajectory $x^*(t)$, i.e., \begin{align} \label{goal}
\lim_{t \to \infty} [x_i(t)-x^*(t)] = 0.
\end{align}
The control input is proposed for \eqref{single} as
\begin{align} \label{usingledis1}
u_i=& - \sum_{j \in N_i} \beta_{ij}\text{sgn} \big( x_i -x_j \big) +\phi_i, \notag \\
\dot{\beta}_{ij}=&\norm{x_i -x_j}_1,   \ \  \ j \in N_i,\\
\phi_i \triangleq &-  H^{-1}_i(x_i,t)\big( \nabla f_i(x_i,t)+ \frac{\partial}{\partial t} \nabla f_i(x_i,t)\big),\notag
\end{align}
where $\phi_i$ is an internal signal, $\beta_{ij}$ is a varying gain with $\beta_{ij}(0)=\beta_{ji}(0)\geq 0$, and sgn($\cdot$) is the signum function defined componentwise. Note that $\phi_i$ depends on only agent $i $'s position. Here \eqref{usingledis1} is a discontinuous controller. It is worth mentioning that unlike continuous or smooth systems, the equilibrium concept of setting the right hand equal to zero to find the equilibrium point might not be valid for discontinuous systems.
Let $X=[x_1^T,x_2^T,...,x_N^T]^T$, and $\Phi=[\phi_1^T,\phi_2^T,...,\phi_N^T]^T$ denote, respectively, the aggregated states and the aggregated internal signals of the $N$ agents. We also define $\Pi \triangleq I_N- \frac{1}{N} \mathbf{1}_N \mathbf{1}_N^T$. Define agent $i$'s consensus error as $e_{X_i}=x_i-\frac{1}{N}\sum_{\ell=1}^N x_\ell$.
Define the consensus error vector $e_X=(\Pi \otimes I_m)X$.
Note that $\Pi$ has one simple zero eigenvalue with $\mathbf{1}_N$ as its right eigenvector and has $1$ as its other eigenvalue with the multiplicity $N-1$. Then it is easy to see that $e_X=0$ if and only if $x_i=x_j\ \forall i,j \in {\mathcal I}$.
\begin{remark} \label{fillipov}
With the signum function in the proposed algorithms in this paper, the right-hand sides of the closed-loop systems are discontinuous.
Thus, the solution should be investigated in terms of differential inclusions
by using nonsmooth analysis \cite{cortes08, fillipov}. However, since the signum function is measurable and locally
essentially bounded, the Filippov solutions of the closed-loop dynamics always exist.
Also the Lyapunov
function candidates adopted in the proofs hereafter are continuously differentiable. Therefore, the
set-valued Lie derivative of them is a singleton at the discontinuous points and the proofs still hold. To avoid symbol redundancy, we do not use the differential inclusions in the proofs.
Furthermore, Filippov solutions
are absolutely continuous curves \cite{cortes08}, which means that the agents' states are continuous functions.
\end{remark}

The remainder of this subsection is devoted to the verification of the algorithm (\ref{usingledis1}). In Proposition \ref{propsingledis1}, we will show that the agents reach consensus using (\ref{usingledis1}). Then this result will be used in Theorem \ref{theoremsingledis1} to prove that the agents minimize the team cost function as  $t\rightarrow\infty$.

\begin{definition}\label{def-new-lap}
Defining $a'_{ij}=a_{ij} \beta_{ij}$, a new Laplacian matrix $L'=[l'_{ij}] \in  \mathbb{R}^{N \times N}$ is introduced, where $l'_{ii} =\sum_{j=1,j \neq i}^N a'^2_{ij}$ and $l'_{ij}=-a'^2_{ij}$ for $i\neq j$. Since $a'_{ij}=a'_{ji}$, the matrix $L'$ is symmetric.
	Similar to the definition of $D$, $D' \triangleq [d'_{ij}] \in  \mathbb{R}^{n \times m}$ is the incidence matrix associated with $L'$, where $d'_{ij} = -a'_{ij}$ if the edge $e_j$ leaves node $i$, $d_{ij} = a'_{ij}$ if it enters node $i$, and $d_{ij} = 0$ otherwise. Thus, $L'$ can be given by $L'=D'D'^T$.
\end{definition}			
\begin{assumption}\label{as-bound-single}
With $\phi_i$ defined in  \eqref{usingledis1}, there exists a positive constant $\bar{\phi}$ such that $\norm{\phi_i -\phi_j}_2 \leq \bar{\phi}, \  \forall i,j \in {\mathcal I},$ and $\forall t$.
\end{assumption}
\begin{prop} \label{propsingledis1}
Suppose that the graph ${\mathcal G}$ is connected and Assumption \ref{as-bound-single} holds. The system \eqref{single} with the algorithm (\ref{usingledis1}) reaches consensus, i.e, $x_i=x_j, \forall i,j \in {\mathcal I},$ as $t\rightarrow \infty$.  
\end{prop}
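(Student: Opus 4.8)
The plan is to establish consensus through a Lyapunov argument that couples the consensus error $e_X$ with the mismatch of the adaptive gains $\beta_{ij}$ from a sufficiently large constant. First I would record two structural facts about \eqref{usingledis1}. Since $\beta_{ij}(0)=\beta_{ji}(0)$ and $\dot\beta_{ij}=\norm{x_i-x_j}_1=\dot\beta_{ji}$, the gains remain symmetric, $\beta_{ij}(t)=\beta_{ji}(t)$ for all $t$. Consequently the two signum terms on each edge cancel, so summing \eqref{single} with \eqref{usingledis1} over $i$ gives $\dot{\bar x}=\frac1N\sum_\ell\phi_\ell$ for the average $\bar x=\frac1N\sum_\ell x_\ell$, and the consensus-error dynamics reduce to $\dot e_{X_i}=-\sum_{j\in N_i}\beta_{ij}\,\text{sgn}(x_i-x_j)+\tilde\phi_i$ with $\tilde\phi_i\triangleq\phi_i-\frac1N\sum_\ell\phi_\ell$. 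Assumption \ref{as-bound-single} then gives the pointwise bound $\norm{\tilde\phi_i}_2\le\bar\phi$, because $\tilde\phi_i=\frac1N\sum_\ell(\phi_i-\phi_\ell)$.

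Next I would fix a constant target gain $\beta$, used only for the analysis and not in the algorithm, and take the Lyapunov function $V=\frac12\sum_i e_{X_i}^T e_{X_i}+\frac12\sum_{(i,j)\in\mathcal E}(\beta_{ij}-\beta)^2$, with one term per undirected edge. Differentiating and grouping the signum contributions by edges, using $x_i-x_j=e_{X_i}-e_{X_j}$ together with $(x_i-x_j)^T\text{sgn}(x_i-x_j)=\norm{x_i-x_j}_1$, the term $\sum_i e_{X_i}^T\dot e_{X_i}$ produces $-\sum_{(i,j)\in\mathcal E}\beta_{ij}\norm{x_i-x_j}_1+\sum_i e_{X_i}^T\tilde\phi_i$, while the gain term contributes $\sum_{(i,j)\in\mathcal E}(\beta_{ij}-\beta)\norm{x_i-x_j}_1$. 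The adaptive parts cancel, leaving $\dot V=-\beta\sum_{(i,j)\in\mathcal E}\norm{x_i-x_j}_1+\sum_i e_{X_i}^T\tilde\phi_i$.

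It then remains to dominate the disturbance term by the edge-difference sum. On the disturbance side, Cauchy--Schwarz gives $\sum_i e_{X_i}^T\tilde\phi_i\le\bar\phi\sqrt N\,\norm{e_X}_2$. On the driving side, chaining $\norm{\cdot}_1\ge\norm{\cdot}_2$, the elementary inequality $\sum_k a_k\ge(\sum_k a_k^2)^{1/2}$ for $a_k\ge0$, and $\sum_{(i,j)\in\mathcal E}\norm{x_i-x_j}_2^2=X^T(L\otimes I_m)X=e_X^T(L\otimes I_m)e_X\ge\lambda_2[L]\norm{e_X}_2^2$ (the last step from $L\mathbf 1_N=0$, the orthogonality $\sum_i e_{X_i}=0$, and Lemma \ref{eigvalue}), I obtain $\sum_{(i,j)\in\mathcal E}\norm{x_i-x_j}_1\ge\sqrt{\lambda_2[L]}\,\norm{e_X}_2$. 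Choosing $\beta>\bar\phi\sqrt N/\sqrt{\lambda_2[L]}$ (admissible since connectivity of $\mathcal G$ forces $\lambda_2[L]>0$) yields $\dot V\le-(\beta\sqrt{\lambda_2[L]}-\bar\phi\sqrt N)\norm{e_X}_2\le0$. Hence $V$ is nonincreasing and bounded, so $e_X$ and the gains are bounded; integrating gives $\int_0^\infty\norm{e_X}_2\,dt<\infty$. Since $\dot e_X$ is bounded (signum bounded, gains bounded, $\tilde\phi_i$ bounded), $\norm{e_X}_2$ is uniformly continuous, and Barbalat's lemma delivers $\norm{e_X}_2\to0$, i.e. $x_i=x_j$ for all $i,j\in\mathcal I$ as $t\to\infty$.

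I expect the main obstacle to be the driving-term inequality of the third paragraph: bridging the $1$-norm that governs the gain update and the quadratic form needed to invoke connectivity through $\lambda_2[L]$, so that a single constant target $\beta$ can absorb the bounded internal-signal discrepancy $\bar\phi$ without any global information entering the controller. The discontinuity is handled in the Filippov sense, but by Remark \ref{fillipov} the continuously differentiable $V$ makes the set-valued Lie derivative a singleton, so the computation above proceeds exactly as in the smooth case.
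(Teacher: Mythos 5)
Your proof is correct and follows essentially the same route as the paper: the same Lyapunov function coupling $e_X$ with the squared mismatch of the adaptive gains from a constant analysis-only target, cancellation of the adaptive terms, domination of the bounded internal-signal discrepancy by the signum-driven edge sum via $\lambda_2[L]$, and Barbalat's lemma. The only differences are cosmetic — you estimate the cross term by Cauchy--Schwarz directly on $\sum_i e_{X_i}^T\tilde\phi_i$ so your target gain must exceed $\bar\phi\sqrt N/\sqrt{\lambda_2[L]}$ rather than the paper's $(N-1)\bar\phi/2$, and your explicit uniform-continuity step before invoking Barbalat is actually slightly more careful than the paper's ``$e_X\in\mathcal L_2$'' claim.
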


\begin{proof}
Using Definition \ref{def-new-lap}, the closed-loop system \eqref{single} with the control input (\ref{usingledis1}) can be recast into a compact form as
\begin{equation} \label{closesingledis1}
\dot{X} = -  (D' \otimes I_m) \text{sgn} \big([ D^T \otimes I_m]X \big)+\Phi,
\end{equation}
where $D$ and $D'$ are defined in Section \ref{sec:notation} and Definition \ref{def-new-lap}, respectively. We can rewrite (\ref{closesingledis1}) as 
\begin{equation} \small  \label{errorsingle1}
\begin{split}
\dot{e}_X = &-  (D' \otimes I_m) \text{sgn}\big([D^T \otimes I_m]e_X \big)+(\Pi \otimes I_m)\Phi,
\end{split}
\end{equation} \normalsize
where we have used the fact that $\Pi D'=D'$. Define the Lyapunov function candidate 
\begin{align*}
W=\frac{1}{2} e_X^T e_X+\frac{1}{2}\sum_{i=1}^{N}\sum_{j \in N_i} (\beta_{ij}-\bar{\beta})^2,
\end{align*}
where $\bar{\beta}>0$ is to be selected. The time derivative of $W$ along (\ref{errorsingle1}) can be obtained as
\vspace{-.1cm}
\begin{align}\label{sgnnorm} \small
\dot{W}=&- e_X^T (D' \otimes I_m)\text{sgn}\big([D^T \otimes I_m]e_X \big)+e_X^T (\Pi \otimes I_m)\Phi \notag\\
&+\frac{1}{2}\sum_{i=1}^{N}\sum_{j \in N_i} (\beta_{ij}-\bar{\beta})\dot{\beta}_{ij}\notag\\
=&-\sum_{i=1}^{N}\sum_{j \in N_i}\frac{\beta_{ij}}{2}\norm{e_{X_i}-e_{X_j}}_1  \\
&+\frac{1}{2N}\sum_{i=1}^{N}\sum_{j=1}^{N} (e_{X_i}-e_{X_j})(\phi_i-\phi_j)\notag\\
&+\frac{1}{2}\sum_{i=1}^{N}\sum_{j \in N_i} (\beta_{ij}-\bar{\beta})(\norm{x_i -x_j}_1)\notag
\end{align}
\begin{align*}\small
&=-\sum_{i=1}^{N}\sum_{j \in N_i}\frac{\beta_{ij}}{2}\norm{e_{X_i}-e_{X_j}}_1\\
&+\frac{1}{2N}\sum_{i=1}^{N}\sum_{j=1}^{N} (e_{X_i}-e_{X_j})(\phi_i-\phi_j)\notag
-\frac{\bar{\beta}}{2}\sum_{i=1}^{N}\sum_{j \in N_i}\norm{e_{X_i}-e_{X_j}}_1\\
&+\sum_{i=1}^{N}\sum_{j \in N_i} \frac{\beta_{ij}}{2}\norm{e_{X_i}-e_{X_j}}_1 \notag\\
&\leq \frac{1}{2N}\sum_{i=1}^{N}\sum_{j=1}^{N}\norm{e_{X_i}-e_{X_j}}_1\norm{\phi_i-\phi_j}\\
&-\frac{\bar{\beta}}{2}\sum_{i=1}^{N}\sum_{j \in N_i}\norm{e_{X_i}-e_{X_j}}_1\\
&\leq \frac{\bar{\phi}}{2N}\sum_{i=1}^{N}\sum_{j=1}^{N}\norm{e_{X_i}-e_{X_j}}_1-\frac{\bar{\beta}}{2}\sum_{i=1}^{N}\sum_{j \in N_i}\norm{e_{X_i}-e_{X_j}}_1,\notag 
\end{align*}\normalsize
where the last inequality holds under Assumption \ref{as-bound-single}. Because $\mathcal{G}$ is connected, we have
\begin{equation*} \small
\begin{split} 
&\dot{W} \leq \frac{\bar{\phi}}{2} \max_i{\{ \sum_{j=1,j \neq i}^{N}\norm{e_{X_i}-e_{X_j}}_1}\}-\frac{\bar{\beta}}{2}\sum_{i=1}^{N}\sum_{j \in N_i}\norm{e_{X_i}-e_{X_j}}_1\\
& \leq \frac{(N-1)\bar{\phi}}{4}\sum_{i=1}^{N}\sum_{j \in N_i}\norm{e_{X_i}-e_{X_j}}_1-\frac{\bar{\beta}}{2}\sum_{i=1}^{N}\sum_{j \in N_i}\norm{e_{X_i}-e_{X_j}}_1.
\end{split}
\end{equation*}\normalsize
Selecting $\bar{\beta}$ such that $\bar{\beta}  >\frac{ (N-1)\bar{\phi}}{2},$ we have
\begin{align} \label{local-lambda2}
\dot{W}\leq& (\frac{(N-1)\bar{\phi}}{4}-\frac{\bar{\beta}}{2})\sum_{i=1}^{N}\sum_{j \in N_i}\norm{e_{X_i}-e_{X_j}}_1\\ \notag
=&(\frac{(N-1)\bar{\phi}}{2}-\bar{\beta})e_X^T (D \otimes I_m)\text{sgn}\big([D^T \otimes I_m]e_X \big)\\\notag
&\leq(\frac{(N-1)\bar{\phi}}{2}-\bar{\beta}) \norm{(D^T \otimes I_m)e_X}_1\\\notag
& \leq (\frac{(N-1)\bar{\phi}}{2}-\bar{\beta}) \sqrt{e_X^T (DD^T \otimes I_m)e_X}\\\notag
&\leq (\frac{(N-1)\bar{\phi}}{2}-\bar{\beta})\sqrt{\lambda_{2}[L]} \norm{e_X}_2< 0,
\end{align}
where in the last inequality the fact that $L=DD^T$ and Lemma \ref{eigvalue} have been used. Therefore, having $W\geq 0$ and $\dot{W} \leq 0$, we can conclude that $e_X \in {\mathcal L}_{\infty}$. By integrating both sides of \eqref{local-lambda2}, we can see that  $e_X \in \mathcal{L}_{2}$. Now, applying Barbalat's Lemma \cite{sheida22j}, we obtain that $e_X$ will converge to zero asymptomatically and hence the agents' positions reach consensus, i.e, $x_i=x_j, \forall i,j \in {\mathcal I},$ as $t\rightarrow \infty$.
\end{proof}

\begin{theorem} \label{theoremsingledis1}
Suppose that the graph ${\mathcal G}$ is connected, and Assumptions \ref{as-exist}, \ref{as1each} and \ref{as-bound-single} hold. If $H_i(x_i,t) = H_j(x_j,t), \forall  t, \ \forall i,j \in {\mathcal I}$, by employing the algorithm (\ref{usingledis1}) for the system (\ref{single}), the optimization goal \eqref{goal} is achieved.
\end{theorem}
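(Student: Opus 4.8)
The plan is to first invoke the consensus result of Proposition \ref{propsingledis1} and then show that the aggregate gradient $\sum_{i=1}^N\nabla f_i(x_i,t)$ decays to zero; together with consensus this forces the common trajectory to the minimizer $x^*(t)$, in the spirit of the centralized flow \eqref{sum-min}. First I would note that, since $\mathcal{G}$ is connected and Assumption \ref{as-bound-single} holds, Proposition \ref{propsingledis1} already gives $e_X\to 0$, i.e.\ each $x_i(t)$ approaches the average trajectory $\bar{x}(t)\triangleq\frac{1}{N}\sum_{\ell=1}^N x_\ell(t)$ as $t\to\infty$.

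Next I would track the aggregate gradient $y(t)\triangleq\sum_{i=1}^N\nabla f_i(x_i,t)$ and show it decays exactly. Summing the closed loop \eqref{single}--\eqref{usingledis1} over $i$, the signum terms cancel in pairs: $\beta_{ij}=\beta_{ji}$ (equal initial gains and $\dot{\beta}_{ij}=\dot{\beta}_{ji}=\norm{x_i-x_j}_1$) while $\text{sgn}(x_i-x_j)=-\text{sgn}(x_j-x_i)$, so $\sum_i\dot{x}_i=\sum_i\phi_i$ and the average is insensitive to the consensus feedback. Differentiating $y$ along the trajectories gives $\dot{y}=\sum_i H_i(x_i,t)\dot{x}_i+\sum_i\frac{\partial}{\partial t}\nabla f_i(x_i,t)$. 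Here the hypothesis $H_i(x_i,t)=H_j(x_j,t)\triangleq H(t)$ for all $t$ is essential: it lets me pull $H(t)$ out of the sum, so $\sum_i H_i(x_i,t)\dot{x}_i=H(t)\sum_i\dot{x}_i=H(t)\sum_i\phi_i=-\sum_i\big(\nabla f_i(x_i,t)+\frac{\partial}{\partial t}\nabla f_i(x_i,t)\big)$, where I also used $H_i^{-1}(x_i,t)=H^{-1}(t)$ inside each $\phi_i$. Substituting, the two time-derivative sums cancel and I obtain the clean linear relation $\dot{y}=-y$, whence $y(t)=e^{-t}y(0)\to 0$. Notice this step needs neither consensus nor any boundedness of $H(t)$, only invertibility from Assumption \ref{as1each}.

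Finally I would combine the two facts. Because $x_i\to\bar{x}$ and each $\nabla f_i(\cdot,t)$ is $C^1$, a mean-value estimate bounds $\nabla f_i(x_i,t)-\nabla f_i(\bar{x},t)$ by a factor of $\norm{x_i-\bar{x}}_2$, so $\nabla\big(\sum_i f_i\big)(\bar{x},t)=y-\sum_i[\nabla f_i(x_i,t)-\nabla f_i(\bar{x},t)]\to 0$. By Lemma \ref{lem-intro-gradzero}, together with convexity of the team cost and Assumption \ref{as-exist}, this forces $\bar{x}(t)\to x^*(t)$; combined with $x_i-\bar{x}\to 0$ it yields $x_i(t)\to x^*(t)$, which is exactly the optimization goal \eqref{goal}. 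This last implication mirrors the reasoning already used in Theorem \ref{Theoremsinglecentral}.

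The main obstacle is making the second step rigorous under the discontinuous (Filippov) closed loop while leaning correctly on the strong hypothesis $H_i(x_i,t)=H_j(x_j,t)$: the cancellation $\sum_i\dot{x}_i=\sum_i\phi_i$ must be read through the set-valued signum using the symmetric selection justified in Remark \ref{fillipov}, and it is precisely the factoring of $H(t)$ that collapses the coupled nonlinear dynamics into $\dot{y}=-y$. A secondary care point is the limiting argument of the third step, where transferring ``$y\to 0$ evaluated at the points $x_i$'' to ``$\nabla(\sum_i f_i)\to 0$ evaluated at $\bar{x}$,'' and then to $\bar{x}\to x^*$, relies on continuity/boundedness of the gradients along the (now established to be bounded) trajectories and on uniqueness of the minimizer via Lemma \ref{lem-intro-gradzero}.
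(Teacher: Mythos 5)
Your proposal is correct and follows essentially the same route as the paper: both rest on the cancellation $\sum_i\dot{x}_i=\sum_i\phi_i$ and the identical-Hessian factoring to kill the coupled terms, the paper phrasing it as $\dot{W}=-y^Ty$ for $W=\tfrac{1}{2}\|y\|_2^2$ while you write the equivalent $\dot{y}=-y$ directly, and both then combine the decay of $y$ with Proposition \ref{propsingledis1}, Lemma \ref{lem-intro-gradzero} and Assumption \ref{as-exist} to reach \eqref{goal}. Your explicit exponential rate and the mean-value step transferring the vanishing gradient to $\bar{x}$ are slightly more detailed than the paper's closing sentence, but they are refinements of the same argument, not a different one.
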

\begin{proof}
Define the Lyapunov function candidate\small
\begin{equation}\label{Wsingledis}  W= \frac{1}{2} ( \sum_{j=1}^N \nabla f_j(x_j))^T (\sum_{j=1}^N \nabla f_j(x_j)), 
\end{equation} \normalsize
where $W$ is positive definite with respect to $\sum_{j=1}^N \nabla f_j(x_j)$. The time derivative of $W$ can be obtained as $\dot{W}= (\sum_{j=1}^N \nabla f_j(x_j))^T (\sum_{j=1}^N H_j \dot{x}_j+ \sum_{j=1}^N \frac{\partial }{\partial t} \nabla f_j(x_j)).$ Under the assumption of identical Hessians, we will have
\begin{equation}  \small\label{local2}
\dot{W}= (\sum_{j=1}^N \nabla f_j(x_j))^T (H_i)(\sum_{j=1}^N  \dot{x}_j+ H_i^{-1}\sum_{j=1}^N \frac{\partial }{\partial t} \nabla f_j(x_j)). 
\end{equation} \normalsize
On the other hand, by using \eqref{usingledis1} for the system \eqref{single} and summing up both sides for $j \in \mathcal{I}$, we know that $\sum_{j=1}^N \dot{x}_j = \sum_{j=1}^N \phi_j$. Then we can rewrite \eqref{local2} as $\dot{W}= -(\sum_{j=1}^N \nabla f_j(x_j))^T (\sum_{j=1}^N \nabla f_j(x_j).$ Therefore, $\dot{W}<0$ for $\sum_{j=1}^N \nabla f_j(x_j) \neq 0$.
This guarantees that $\sum_{j=1}^N \nabla f_j(x_j)$ will asymptomatically converge to zero.
Now, under the assumption that $\sum_{i=1}^N f_i(x,t)$ is convex, using Proposition \ref{propsingledis1} and Lemma \ref{lem-intro-gradzero}, it is easy to see that under Assumption \ref{as-exist} as $t\rightarrow \infty$ the team cost function $\sum_{i=1}^N f_i(x_i,t)$ will be minimized, where $x_i=x_j, \ \forall i,j \in {\mathcal I}$.
\end{proof}

\begin{remark} In \eqref{usingledis1} each agent $i$ is required to know $ \frac{\partial }{\partial t} \nabla f_i(x_i,t),$ which might be restrictive. However, there are applications where each agent knows the closed form of its own local cost function (e.g., motion control with an optimization objective) or at least the agent knows how the cost function is varying with respect to time (e.g., home automation).
For example, in motion control with an optimization objective, it is possible that each agent knows the closed form of its local cost function or in home automation smart electrical devices need to agree on
the total amount of energy consumption that maximizes an
overall utility function formed by the sum of the utility
functions of the devices. However, a varying price rate for electricity during a day makes the optimization problem time varying. Although the price rate of the electricity is varying during the day, it is known to the agents beforehand. Hence, calculating$ \frac{\partial}{\partial t}\nabla f_i(x_i,t)$ might not be an issue in this application.
Furthermore, there are algorithms to estimate the derivative of a function by knowing only the value of the function at each time $t$.
How to apply the idea to distributed continuous-time time-varying optimization is a possible direction for our future studies.
\end{remark}

\begin{remark} \label{Remarkboundsingle}
Assumption \ref{as-bound-single} intuitively places a bound on the Hessians and the changing rates of the gradients of the cost functions with respect to $t$.
In Appendix \ref{AppendixA}, we will show that Assumption \ref{as-bound-single} holds if the cost functions with identical Hessians satisfy certain conditions such that the boundedness of $\norm{x_i-x_j}_2$ for all $t$ guarantees the boundedness of $\norm{\nabla f_j(x_j,t) - \nabla f_i(x_i,t) }_2$ and $\norm{\frac{\partial }{\partial t}\nabla  f_j(x_j,t)-\frac{\partial }{\partial t}\nabla  f_i(x_i,t)}_2, \ \forall i,j \in {\mathcal I},$ for all $t$. For example, consider the cost functions commonly used for energy minimization, e.g., $f_i(x_i,t)=(ax_i+g_i(t))^{2},$ where $a$ is a positive constant and $g_i(t)$ is a time-varying function particularly for agent $i$. For these cost functions, the boundedness of $\norm{x_i-x_j}_2$ for all $t$ guarantees the boundedness of $\norm{\nabla f_j(x_j,t) - \nabla f_i(x_i,t) }_2$ and $\norm{\frac{\partial }{\partial t}\nabla  f_j(x_j,t)-\frac{\partial }{\partial t}\nabla  f_i(x_i,t)}_2, \ \forall i,j \in {\mathcal I},$ for all $t$, if $\norm{g_i(t) -g_j(t)}_2$ and $\norm{\dot{g}_i(t) -\dot{g}_j(t)}_2$ are bounded. Hence to satisfy Assumption \ref{as-bound-single} for $f_i(x_i,t)=(ax_i+g_i(t))^{2},$ it is sufficient to have a bound on $\norm{g_i(t) -g_j(t)}_2$ and $\norm{\dot{g}_i(t) -\dot{g}_j(t)}_2$.
\end{remark}

In Subsection \ref{secsingledis2} an estimator-based algorithm is introduced, where the assumption on identical Hessians is relaxed.

\subsection{Estimator-Based Distributed Time-Varying Convex Optimization} \label{secsingledis2}

In this subsection, an estimator-based algorithm is designed such that each agent calculates (\ref{sum-min}) in a distributed manner. To achieve this goal, distributed average tracking is used as a tool. Each agent generates an estimate of (\ref{sum-min}). Then a controller is designed such that each agent tracks its own generated signal while guaranteeing that the agents reach consensus.

The proposed algorithm for the system (\ref{single}) has two separate parts, the estimator and controller. The estimator part is given by
  \begin{align} \label{avali}\small
\dot{\xi}_i=\alpha \sum_{j \in N_i(t)} \text{sgn}(w_j - w_i), \quad w_i=\xi_i+{\nabla} f_i(x_i,t)
  \end{align}
\vspace{-.45cm}
  \begin{align}\label{dovomi}
\hspace*{-.5cm}\dot{\psi}_i=\beta \sum_{j \in N_i(t)} \text{sgn}(\theta_j -\theta_i), \ \quad \theta_i=\psi_i+H_i(x_i,t)
  \end{align}
\vspace{-.45cm}
  \begin{align}\label{sevomi}
\dot{\phi}_i=\gamma \sum_{j \in N_i(t)} \text{sgn}(\varsigma_j -\varsigma_i), \quad \varsigma_i=\phi_i+\frac{{\partial}}{{\partial t}}\nabla f_i(x_i,t)
  \end{align}
\vspace{-.45cm}
\begin{align} \label{chahar}\hspace{0.4cm}
S_i=&-{\theta_i}^{-1}(\tau w_i+\varsigma_i),
  \end{align}
where  $\alpha, \beta, \gamma$, and $\tau$ are positive coefficients to be selected and $N_i(t)$ is the set of agent $i$'s neighbors at time $t$. The controller part is given by
  \begin{equation}\label{cont}
u_i = -\sum_{j \in N_i(t)} \text{sig}(x_i -x_j)^{\eta}+S_i,
  \end{equation} \normalsize
where $\text{sig}(\cdot)$ is defined componentwise and $0<\eta<1$.
In implementing \eqref{chahar}, $\theta_i$ can be projected on the space of positive-definite matrices, which ensures that $\theta_i$ remains nonsingular. Also $\xi_i,\psi_i,$ and $\phi_i$ are the internal states of the distributed average tracking estimators, where their initial values are such that\footnote{As a special case the initial values can be chosen as $\xi_j(0)=\psi_j(0)=\phi_j(0)=0, \forall j \in {\mathcal I}$.}
\begin{align} \label{initialize}
\sum_{j=1}^N \xi_j(0)=\sum_{j=1}^N \psi_j(0)=\sum_{j=1}^N \phi_j(0)=0.
\end{align}
The estimator part \eqref{avali}-\eqref{chahar}, generates the internal signal for each agent and the controller part (\ref{cont}) guarantees consensus. Here the separation principle can be applied if the estimator part converges in finite time.

\begin{assumption} \label{as2}
The estimators' coefficients $\alpha, \beta,$ and $\gamma$ satisfy the following inequalities: $\alpha > \sup_{t} \norm{\frac{\partial}{\partial t}\nabla f_i(x_i,t)}_\infty, \ \beta>\sup_{t} \norm{\frac{{\partial}}{{\partial t}} H_i(x_i,t)}_\infty,$ and $\gamma > \sup_{t} \norm{\frac{\partial^2}{\partial t^2}\nabla f_i(x_i,t)}_\infty, \ \forall i \in {\mathcal I}$.
\end{assumption}

Assumption \ref{as2} can be satisfied if the partial derivatives of the Hessians, the first- and second-order partial derivatives of the gradient are bounded.

\begin{theorem} \label{theoestsing}
Suppose that the graph ${\mathcal G}(t)$ is connected for all $t$. If Assumptions \ref{as-exist}, \ref{as1}, and \ref{as2} and the initial condition \eqref{initialize} hold, for the system (\ref{single}) with the algorithm \eqref{avali}-\eqref{cont}, the optimization goal \eqref{goal} is achieved.
\end{theorem}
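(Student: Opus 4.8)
The plan is to exploit the separation principle flagged before the theorem: the three distributed average tracking (DAT) estimators \eqref{avali}--\eqref{sevomi} should drive, in finite time, each agent's signals $w_i$, $\theta_i$ and $\varsigma_i$ to the network averages $\frac{1}{N}\sum_{j=1}^N \nabla f_j(x_j,t)$, $\frac{1}{N}\sum_{j=1}^N H_j(x_j,t)$ and $\frac{1}{N}\sum_{j=1}^N \frac{\partial}{\partial t}\nabla f_j(x_j,t)$, respectively. Once that holds, the factor $\frac{1}{N}$ cancels in \eqref{chahar}, so every $S_i$ becomes common across agents and equal to the centralized input \eqref{sum-min} evaluated at the current positions. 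It then remains to show that the controller \eqref{cont} forces the positions to reach consensus, so that the common state obeys the centralized single-integrator dynamics of Subsection \ref{seccent} and converges to $x^*(t)$ by Theorem \ref{Theoremsinglecentral}.

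For the estimator part I would first establish the average-preservation property. Summing each of \eqref{avali}, \eqref{dovomi} and \eqref{sevomi} over $i$ and using that ${\mathcal G}(t)$ is undirected, the signum terms cancel in pairs, so $\frac{d}{dt}\sum_i \xi_i = \frac{d}{dt}\sum_i \psi_i = \frac{d}{dt}\sum_i \phi_i = 0$; with the initialization \eqref{initialize} this gives $\sum_i \xi_i(t) \equiv \sum_i \psi_i(t) \equiv \sum_i \phi_i(t) \equiv 0$, and hence $\sum_i w_i = \sum_i \nabla f_i(x_i,t)$, $\sum_i \theta_i = \sum_i H_i(x_i,t)$ and $\sum_i \varsigma_i = \sum_i \frac{\partial}{\partial t}\nabla f_i(x_i,t)$ for all $t$. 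Next I would bound the pairwise disagreement among the $w_i$ (and analogously among the $\theta_i$ and $\varsigma_i$) with a nonsmooth Lyapunov / maximum-state argument: viewing $\dot{w}_i = \alpha \sum_{j \in N_i(t)} \text{sgn}(w_j - w_i) + \frac{d}{dt}\nabla f_i(x_i,t)$ as a signum consensus flow perturbed by the reference rate, the gains of Assumption \ref{as2} are meant to dominate the relevant rates so that the disagreements collapse to zero in some finite time $T$, after which the estimators track the averages exactly. The projection of $\theta_i$ onto the positive-definite cone keeps $\theta_i^{-1}$ well defined, and convexity of the team cost together with Assumption \ref{as1} guarantees that the tracked average Hessian is positive definite.

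Granting finite-time estimator convergence, for $t > T$ each $S_i$ equals this common centralized input. Writing $\delta_i = x_i - \frac{1}{N}\sum_{\ell=1}^N x_\ell$, the common $S_i$ drops out of the disagreement dynamics, leaving $\dot{\delta}_i = -\sum_{j \in N_i(t)} \text{sig}(\delta_i - \delta_j)^{\eta}$ with $0 < \eta < 1$, a standard finite-time consensus protocol, so the positions reach consensus in an additional finite time. Moreover, summing \eqref{cont} over $i$ and using the undirected graph gives $\sum_i \dot{x}_i = \sum_i S_i$, so once $x_i = x$ for all $i$ the common state satisfies $\dot{x} = -\big(\sum_{j=1}^N H_j(x,t)\big)^{-1}\big(\tau \sum_{j=1}^N \nabla f_j(x,t) + \sum_{j=1}^N \frac{\partial}{\partial t}\nabla f_j(x,t)\big)$, which is the centralized dynamics of Subsection \ref{seccent} for the team cost $\sum_{j=1}^N f_j$. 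By that centralized result (the extension of Theorem \ref{Theoremsinglecentral}), Lemma \ref{lem-intro-gradzero} and Assumption \ref{as-exist}, $\sum_{j=1}^N \nabla f_j(x,t) \to 0$ and $x \to x^*(t)$, which is exactly \eqref{goal}.

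The main obstacle is the finite-time estimator convergence in the second step, because the reference signals depend on the moving states. The total derivative $\frac{d}{dt}\nabla f_i(x_i,t) = H_i(x_i,t)\dot{x}_i + \frac{\partial}{\partial t}\nabla f_i(x_i,t)$ contains $\dot{x}_i = u_i$, which through $S_i$ depends on the estimator outputs themselves; this couples the estimator and the controller, so the perturbation rate entering the DAT analysis is not simply the partial-derivative bound of Assumption \ref{as2}. To close the argument I would first rule out finite escape and establish that all signals remain bounded on every finite interval, and then verify that the estimator disagreements still contract at a rate bounded away from zero until consensus, so that convergence occurs before the coupling can interfere. Quantifying this interplay, rather than the routine Lyapunov bookkeeping in the other steps, is where the analysis must be most careful.
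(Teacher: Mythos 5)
Your two-stage decomposition (finite-time convergence of the distributed average tracking estimators, then finite-time consensus under the now-common input $S_i$, then reduction to the centralized dynamics \eqref{sum-min}) is exactly the structure of the paper's proof. The controller half matches in substance: the paper shifts variables by $\tilde{x}_i = x_i - \int_T^t \bar{S}\,\text{dt}$ and cites \cite{WangSingle} for finite-time consensus of $\dot{\tilde{x}}_i = -\sum_{j\in N_i(t)}\text{sig}(\tilde{x}_i-\tilde{x}_j)^{\eta}$, which is the same computation as your disagreement dynamics for $\delta_i$, and the concluding step (once $x_i\equiv x_j$ and $\dot{x}_i=\bar{S}$, the common state obeys the centralized flow and converges to $x^*(t)$ by the argument of Theorem \ref{Theoremsinglecentral}) is identical to yours.

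The divergence, and the gap, is in the estimator half. The paper does not prove finite-time average tracking at all: it invokes Theorem 2 of \cite{boundv} as a black box under Assumption \ref{as2} and moves on. You instead attempt a self-contained derivation (average preservation plus a nonsmooth contraction argument) and you correctly isolate the obstruction: the tracked signal is $\nabla f_i(x_i,t)$ along the closed-loop trajectory, so its rate is $H_i(x_i,t)\dot{x}_i+\frac{\partial}{\partial t}\nabla f_i(x_i,t)$, which feeds back through $u_i$ and hence through the estimator outputs themselves, while Assumption \ref{as2} bounds only the partial-derivative term. You explicitly leave this step open, so as written your proof is incomplete precisely at the point the paper handles by citation. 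To match the paper you would simply quote the DAT result; to close your self-contained version you would need either a strengthened gain condition dominating the total derivatives along closed-loop trajectories or the no-finite-escape and uniform-contraction argument you only sketch. It is worth noting that the concern you surface (partial versus total derivative in Assumption \ref{as2}) is a legitimate one that the paper's own proof does not engage with.
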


\begin{proof}
\textbf{Estimator}: It follows from Theorem 2 in \cite{boundv} that if Assumption \ref{as2} holds, then there exists a $T>0$ such that for all $t\geq T$, $\norm{w_i-\frac{1}{N}\sum_{j=1}^N \nabla f_j(x_j,t)}_2=0,$ $\norm{\theta_i-\frac{1}{N}\sum_{j=1}^N H_j(x_j,t)}_2=0,$ and $\norm{\varsigma_i-\frac{1}{N}\sum_{j=1}^N \frac{\partial}{\partial t} \nabla f_j(x_j,t)}_2 =0$. Now it follows from \eqref{chahar} that for all $t \geq T$, $S_i= -(\sum_{j=1}^N {H_j(x_j,t))}^{-1}\big( \tau\sum_{j=1}^N  \nabla f_j(x_j,t)+ \sum_{j=1}^N\frac{\partial }{\partial t}\nabla  f_j(x_j,t)\big),$ $\forall i \in \mathcal{I}$. Note that for $t \geq T$, $\theta_i$ is nonsingular without projection due to Assumption \ref{as1} and hence the projection operation simply returns $\theta_i$ itself.
Till now we have shown that all agents generate the internal signal $S_i$, where $S_i=S_j, \forall i,j \in {\mathcal I}$, in finite time.

\textbf{Controller}: 
Note that $\forall \ t\geq T, \ S_i=S_j, \ \forall i,j \in {\mathcal I},$ denoted as $\bar{S}$. For $t\geq T,$ using \eqref{cont} for \eqref{single}, we have
\begin{equation} \label{contestsingleafterT} \small
\begin{split}
\dot{x}_i=&-\sum_{j \in N_i(t)} \text{sig}(x_i-x_j)^{\eta}+\bar{S}.
\end{split} \normalsize
\end{equation}
For $t\geq T$, rewriting \eqref{contestsingleafterT} using new variables $\tilde{x}_i= x_i- \int_T^t \bar{S} \text{dt}$, we have
\begin{equation} \label{snglcons}
\begin{split}
\dot{\tilde{x}}_i=&-\sum_{j \in N_i(t)}\text{sig}(\tilde{x}_i-\tilde{x}_j)^{\eta}.
\end{split}
\end{equation}
It is proved in \cite{WangSingle} that using \eqref{snglcons}, there exists a time $T'$ such that $\tilde{x}_i=\tilde{x}_j,\ \forall i,j \in {\mathcal I}$. As a result we have $x_i=x_j,\ \forall i,j \in {\mathcal I},$ and $\dot{x}_i = \bar{S},  \ \forall t\geq T+T'$. Now, it is easy to see that according to \eqref{sum-min} the optimization goal \eqref{goal} is achieved.
\end{proof}
\begin{remark} \label{assumption-explanation}
Satisfying the conditions mentioned in Assumption \ref{as2} might be restrictive but they hold for an important class of cost functions. For example, if the agents' cost functions are in the form of $f_i(x_i,t)=(a_ix_i+g_i(t))^{2}$, where the Hessians are not equal, the above conditions are equivalent to the conditions that $\norm{g_i(t)}_2, \norm{\dot{g}_i(t)}_2,$ and $\norm{\ddot{g}_i(t)}_2$ are bounded. This is applicable to a vast class of time-varying functions, $g_i(t)$, such as $\text{sin}(t), e^{-t}\text{cos}(t), \frac{1}{1+t} $ and $\text{tanh}(t)$. \end{remark}

\begin{remark} \label{pros-cons}
The algorithm introduced in (\ref{avali})-(\ref{cont}) just requires that Assumptions \ref{as1} and \ref{as2} hold. Note that in Assumption \ref{as1}, it is not required that each agent's cost function $f_i(x,t)$ has invertible Hessian but instead their sum, which is weaker than Assumption \ref{as1each}. In contrast, for the algorithm \eqref{usingledis1}, not only Assumption \ref{as1each} and the conditions mentioned in Remark \ref{Remarkboundsingle} have to be satisfied for each individual function $f_i(x_i,t)$, it requires the agents' Hessians to be equal.
However, in the algorithm \eqref{usingledis1} the agents just need their own positions and the relative positions between themselves and their neighbors.
In some applications, these pieces of information can be obtained by sensing; hence the communication necessity might be eliminated.
In contrast, in the algorithm (\ref{avali})-(\ref{cont}) each agent must communicate three variables $w_i, \varsigma_i$ and $\psi_i$ with its neighbors, which necessitates the communication requirement. 
\end{remark}
\section{Time-Varying Convex Optimization For Double-Integrator Dynamics}  \label{sec:double}
In this section, we study the convex optimization problem with time-varying cost functions for double-integrator dynamics.
In some applications, it might be more realistic to model the equations of motion of the agents with double-integrator dynamics, i.e., mass-force model, to take into account the effect of inertia. Unlike single-integrator dynamics, in the case of double-integrator dynamics, the agents' positions and velocities at each time must be determined properly such that the team cost function is minimized. However, there is only direct control on each agent's acceleration and hence there exist new challenges. As a first step, in Subsection \ref{subsecdoublecent}, a centralized algorithm will be introduced.
\subsection{Centralized Time-Varying Convex Optimization} \label{subsecdoublecent}
In this subsection, we focus on the time-varying convex optimization problem of \eqref{fcent} for double-integrator dynamics
\begin{equation} \small \label{dbl}
\dot{x}(t) = v(t)\ \quad 
\dot{v}(t) = u(t),
\end{equation} \normalsize
where $x, v, u \in  \mathbb{R}^{m}$ are the position, velocity, and control input, respectively. Our goal is to design the control input $u$ to minimize the cost function $f_0(x,t)$.
In Theorem \ref{theoremdoublecentral}, an algorithm will be proposed to solve the problem defined by (\ref{fcent}) and (\ref{dbl}). The control input is proposed for \eqref{dbl} as
\begin{equation} \label{udoublecent}
\begin{split}
u=-H^{-1}_0 (x,t)\big( \frac{\partial}{\partial t}\frac{d }{dt}\nabla f_0(x,t)+&\frac{d }{dt}\nabla f_0(x,t) \big)\\
- H_0(x,t) \nabla f_0(x,t)+ \bigg(H_0^{-1}(x,t) &[\frac{d}{dt} H_0(x,t)] H^{-1}_0(x,t)\bigg) \\
\big( \frac{\partial }{\partial t}\nabla f_0(x,t)&+\nabla f_0(x,t) \big).
\end{split}
\end{equation}

\begin{theorem} \label{theoremdoublecentral}
Suppose that $f_0(x,t)$ satisfies Assumptions \ref{as-exist} and \ref{as1}. Using  \eqref{udoublecent} for \eqref{dbl}, $x(t)$ converges to the optimal trajectory $x_0^*(t)$, the minimizer of \eqref{fcent}, i.e. $\lim_{t \to \infty} [x(t)-x_0^*(t)] = 0$.
\end{theorem}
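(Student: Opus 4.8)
The plan is to reduce the second-order problem to a first-order error system whose Lyapunov analysis mirrors that of Theorem~\ref{Theoremsinglecentral}. The guiding idea is that \eqref{udoublecent} should drive the velocity $v$ to track the ``single-integrator descent velocity'' $v_d \triangleq -H_0^{-1}(x,t)\big(\nabla f_0(x,t)+\frac{\partial}{\partial t}\nabla f_0(x,t)\big)$, that is, the velocity that would have been commanded by \eqref{ucent} with $\tau=1$. Once $v=v_d$, the closed loop collapses to the single-integrator gradient flow and $\nabla f_0$ decays to zero.

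First I would introduce the two error variables $p \triangleq \nabla f_0(x,t)$ and the velocity error $e_v \triangleq v - v_d = v + H_0^{-1}(\nabla f_0 + \frac{\partial}{\partial t}\nabla f_0)$. Differentiating $p$ along \eqref{dbl} and using the identity $H_0 v_d + \frac{\partial}{\partial t}\nabla f_0 = -\nabla f_0$ yields the clean relation $\dot p = H_0 v + \frac{\partial}{\partial t}\nabla f_0 = -p + H_0 e_v$. The crux is then to compute $\dot e_v = u + \frac{d}{dt}[H_0^{-1}(\nabla f_0 + \frac{\partial}{\partial t}\nabla f_0)]$ and substitute \eqref{udoublecent}. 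I expect the term $H_0^{-1}[\frac{d}{dt}H_0]H_0^{-1}(\cdot)$ in \eqref{udoublecent} to cancel exactly against the contribution of $\frac{d}{dt}(H_0^{-1}) = -H_0^{-1}[\frac{d}{dt}H_0]H_0^{-1}$ in $\dot v_d$, the $\frac{d}{dt}\nabla f_0$ terms to cancel, and the mixed term $\frac{\partial}{\partial t}\frac{d}{dt}\nabla f_0$ to cancel against $\frac{d}{dt}\frac{\partial}{\partial t}\nabla f_0$, these two being equal because the spatial and temporal partials of $\nabla f_0$ commute (under the smoothness that \eqref{udoublecent} itself presupposes). What survives is precisely $\dot e_v = -H_0 \nabla f_0 = -H_0 p$, so the closed loop is the compact system $\dot p = -p + H_0 e_v$, $\dot e_v = -H_0 p$.

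With this in hand the Lyapunov step is straightforward. I would take $V = \frac{1}{2} p^T p + \frac{1}{2} e_v^T e_v$, which is positive definite in $(p,e_v)$, and compute $\dot V = p^T(-p + H_0 e_v) + e_v^T(-H_0 p) = -\normsq{\nabla f_0}$, where the two cross terms cancel because $H_0$ is symmetric, so $p^T H_0 e_v = e_v^T H_0 p$. Hence $V$ is nonincreasing, giving $p, e_v \in \mathcal{L}_\infty$ and $\nabla f_0 \in \mathcal{L}_2$. After noting that $\dot p$ is bounded (so $\nabla f_0$ is uniformly continuous), Barbalat's Lemma yields $\nabla f_0(x(t),t) \to 0$. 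Finally, convexity of $f_0$ together with Lemma~\ref{lem-intro-gradzero} and Assumption~\ref{as-exist} gives $x(t) \to x_0^*(t)$, which is the claim.

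The main obstacle is the middle step: verifying that \eqref{udoublecent} produces exactly $\dot e_v = -H_0 p$. This is a delicate bookkeeping exercise, and the three groups of terms in \eqref{udoublecent} appear to have been reverse-engineered precisely so that these cancellations occur, with the correct interpretation of the mixed derivative $\frac{\partial}{\partial t}\frac{d}{dt}\nabla f_0$ being essential. A secondary technical point is justifying the boundedness of $H_0$ along the trajectory needed for Barbalat, which I would handle by arguing that $x$ stays bounded (from $V \in \mathcal{L}_\infty$ and convexity) together with the regularity granted by Assumption~\ref{as1}.
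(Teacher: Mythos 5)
Your proposal is correct and follows essentially the same route as the paper: the paper's proof defines $S_0=H_0^{-1}(x,t)\big(\frac{\partial}{\partial t}\nabla f_0+\nabla f_0\big)$ (i.e., $-v_d$ in your notation) and uses the Lyapunov function $W=\frac{1}{2}\norm{\nabla f_0}^2+\frac{1}{2}\norm{S_0-v}^2$ to obtain $\dot W=-\nabla f_0^T\nabla f_0$, then invokes $\mathcal{L}_2$ membership, Barbalat's Lemma, Lemma~\ref{lem-intro-gradzero} and Assumption~\ref{as-exist} exactly as you do. Your bookkeeping of the cancellations in $\dot e_v$ is the right (sign-consistent) way to verify the paper's claimed identity --- the velocity-error term that actually makes the cross terms cancel is $v+S_0$, matching your $e_v$ --- and your added remarks on uniform continuity and boundedness of $H_0$ for Barbalat are care points the paper leaves implicit.
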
\normalsize
\begin{proof}
Define the Lyapunov function candidate $W=\frac{1}{2} \nabla f_0^T(x,t) \nabla f_0(x,t)+\frac{1}{2} (S_0-v)^T (S_0-v),$ where
$S_0={H_0}^{-1}(x,t) \big( \frac{\partial }{\partial t} \nabla f_0(x,t) + \nabla f_0(x,t)\big)$. The derivative of $W$ along the system \eqref{dbl} with the control input \eqref{udoublecent} is obtained as
\begin{equation} \label{dwdblcent}
\begin{split}
\dot{W} &=  \nabla f_0^T(x,t) (H_0(x,t) v+\frac{\partial }{\partial t} \nabla f_0(x,t))\\&+(  S_0 - v)^T (\dot{S}_0 -u )= -\nabla f_0^T(x,t) \nabla f_0(x,t)
\end{split}
\end{equation}\normalsize
Therefore, $\dot{W}< 0$ for $\nabla f_0(x,t) \neq 0$. Now, having $W\geq 0$ and $\dot{W} \leq 0$, we can conclude that $\nabla f_0(x,t), S_0-v \in {\mathcal L}_{\infty}$. By integrating both sides of \eqref{dwdblcent}, we can see that  $\nabla f_0(x,t) \in \mathcal{L}_{2}$. Now, applying Barbalat's Lemma \cite{sheida22j}, we obtain that $\nabla f_0(x,t)$ will converge to zero asymptomatically. Then by using Lemma \ref{lem-intro-gradzero} and under Assumption \ref{as-exist}, it is easy to see that $f_0$ will be minimized, where $x(t)$ converges to the optimal trajectory $x_0^*(t)$.        
\end{proof}

The results from Lemma \ref{theoremdoublecentral} can be extended to minimize the convex function $\sum_{i=1}^N f_i(x,t)$. If Assumption \ref{as1} holds, with
\begin{equation}\footnotesize \label{sum-min-double1} 
\begin{split}
u= - \big(\sum_{i=1}^N H_i(x,t)&\big)^{-1}\big(\sum_{i=1}^N \frac{\partial}{\partial t} \frac{d }{dt}\nabla f_i(x,t)+\sum_{i=1}^N \frac{d }{dt}\nabla f_i(x,t) \big)\\
-(\sum_{i=1}^N H_i(x,t))& (\sum_{i=1}^N\nabla f_i(x,t))\\
+\bigg([\sum_{i=1}^N H_i(x,t)&]^{-1} [\sum_{i=1}^N\frac{d}{dt} H_i(x,t)][\sum_{i=1}^N H_i(x,t)]\bigg)\\
&\big( \sum_{i=1}^N\frac{\partial }{\partial t}\nabla f_i(x,t)+ \sum_{i=1}^N \nabla f_i(x,t)\big)\\
\end{split}
\end{equation} \normalsize
for \eqref{dbl}, the function $\sum_{i=1}^N f_i(x,t)$ is minimized. Unfortunately, \eqref{sum-min-double1} is a centralized solution relying on the knowledge of all $f_i(x,t), i\in \mathcal{I}$. 
In Subsections \ref{subsecdbldis1} and \ref{subsecdblestim}, \eqref{sum-min-double1} will be exploited to propose two algorithms for solving the time-varying convex optimization problem for double-integrator dynamics in a distributed manner.

\subsection{Distributed Time-Varying Convex Optimization Using Neighbors' Positions and Velocities} \label{subsecdbldis1}
In what follows, we focus on solving the distributed time-varying convex optimization problem (\ref{costdis}) for agents with double-integrator dynamics 
\begin{equation} \label{double}  \small
\dot{x}_i(t) = v_i(t)\ \quad 
\dot{v}_i(t) = u_i(t), \quad i \in \mathcal{I}, \end{equation}
where $x_i, v_i \in  \mathbb{R}^{m}$ are, respectively, the position and velocity, and $u_i \in  \mathbb{R}^{m}$ is the control input of agent $i$. In this subsection, an algorithm with adaptive gains will be proposed, where each agent has access to only its own position and the relative positions and velocities between itself and its neighbors. The control input is proposed for \eqref{double} as
\begin{equation} \label{udbldis1}
\begin{split}
u_i=&-\sum_{j \in N_i} \mu (x_i-x_j)+ \alpha (v_i-v_j)\\
& - \sum_{j \in N_i} \beta_{ij} \text{sgn}(\gamma  (x_i-x_j)+ \zeta (v_i-v_j)) +\phi_i \\
\dot{\beta}_{ij}=& \norm{\gamma(x_i-x_j)+\zeta(v_i-v_j)}_1, \ \ j \in N_i,
\end{split}
\end{equation}
where 
\begin{equation} \label{phi-double}
\begin{split} \small
\phi_i\triangleq -H_i^{-1} (x_i,t)\big( \frac{\partial}{\partial t}&\frac{d }{dt}\nabla f_i(x_i,t)+\frac{d }{dt}\nabla f_i(x_i,t) \big)\\
- H_i(x_i,t) \nabla f_i(x_i&,t)\\
+ \bigg(H_i^{-1}(x_i,t) [\frac{d}{dt} &H_i(x_i,t)] H^{-1}_i(x_i,t)\bigg) \\
&\big( \frac{\partial }{\partial t}\nabla f_i(x_i,t)+\nabla f_i(x_i,t) \big),
\end{split}
\end{equation}\normalsize
where $\mu, \alpha, \gamma$ and $\zeta$ are positive coefficients, and $\beta_{ij}$ is a varying gain with $\beta_{ij}(0)=\beta_{ji}(0)\geq 0$. Note that $\phi_i$ depends on only agent $i$'s position and velocity. Furthermore all terms in \eqref{phi-double} are assumed to exist.
Define agent $i$'s position and velocity consensus error as, respectively, $e_{X_i}=x_i-\frac{1}{N}\sum_{\ell=1}^N x_\ell$ and $e_{V_i}=v_i-\frac{1}{N}\sum_{\ell=1}^N v_\ell$.
Let $X=[x_1^T,x_2^T,...,x_N^T]^T$ and $V=[v_1^T,v_2^T,...,v_N^T]^T$. Define the consensus error vectors for position and velocity as
\begin{equation} \label{exev}
e_X(t)=(\Pi \otimes I_m)X, \ e_V(t)=(\Pi \otimes I_m)V.
\end{equation}
As discussed in Subsection \ref{subsecsingledis1}, it is easy to see that $e_X(t)=0, e_V(t)=0$ if and only if $x_i=x_j , v_i=v_j, \ \forall i,j \in {\mathcal I}$. Also let $\Phi=[\phi_1^T,\phi_2^T,...,\phi_N^T]^T$.

\begin{assumption} \label{as-bound-double}
With $\phi_i$ defined in  \eqref{phi-double}, there exists a positive constant $\bar{\phi}$ such that $\norm{\phi_i -\phi_j}_2 \leq \bar{\phi}, \  \forall i,j \in {\mathcal I},$ and $\forall t$.
\end{assumption}
In Proposition \ref{propdbldis1}, we will show that the agents reach consensus using (\ref{udbldis1}). Then this result will be used in Theorem \ref{theorem1} to show that the agents minimize the team cost function as $t\rightarrow\infty$.  
\begin{prop} \label{propdbldis1}
Suppose that the graph ${\mathcal G}$ is connected, and Assumption \ref{as-bound-double} and $\frac{\gamma}{\alpha \zeta}< \lambda_2[L]$ hold. The system \eqref{double} with the algorithm (\ref{udbldis1}) reaches consensus, i.e, $x_i=x_j, v_i=v_j,\ \forall i,j \in {\mathcal I},$ as $t\rightarrow\infty$.   
\end{prop}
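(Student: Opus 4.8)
The plan is to mirror the proof of Proposition \ref{propsingledis1}, but with a Lyapunov function that also copes with the double-integrator kinematics. First I would rewrite the closed loop \eqref{double}, \eqref{udbldis1} in compact form. Using $\Pi D' = D'$ and $D^T\mathbf{1}_N = \mathbf{0}$ exactly as in the single-integrator case, the consensus-error dynamics \eqref{exev} become $\dot{e}_X = e_V$ together with
\[
\dot{e}_V = -\mu(L\otimes I_m)e_X - \alpha(L\otimes I_m)e_V - (D'\otimes I_m)\,\text{sgn}\big([D^T\otimes I_m]\sigma\big) + (\Pi\otimes I_m)\Phi,
\]
where $\sigma \triangleq \gamma e_X + \zeta e_V$ is the combined (sliding) error and $D'$, $L'=D'D'^T$ are the weighted incidence and Laplacian matrices of Definition \ref{def-new-lap}. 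The key observation is that the signum acts exactly on $[D^T\otimes I_m]\sigma$, and that $\sigma$ lies in the consensus-error subspace (it is annihilated by $\mathbf{1}_N^T\otimes I_m$), so by connectivity $[D^T\otimes I_m]\sigma\to\mathbf{0}$ forces $\sigma\to\mathbf{0}$.

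Next I would propose the Lyapunov function candidate
\[
W = \frac{\mu\zeta}{2}e_X^T(L\otimes I_m)e_X + \frac{\zeta}{2}e_V^T e_V + \gamma\, e_X^T e_V + \frac{1}{2}\sum_{i=1}^N\sum_{j\in N_i}(\beta_{ij}-\bar{\beta})^2,
\]
with $\bar{\beta}>0$ to be selected. The cross term $\gamma e_X^T e_V$ is included precisely so that, after the symmetric Laplacian terms cancel, the derivative of the quadratic part reduces to $-\sigma^T(D'\otimes I_m)\text{sgn}([D^T\otimes I_m]\sigma) + \sigma^T(\Pi\otimes I_m)\Phi$ plus a purely quadratic remainder of the form $\gamma\norm{e_V}_2^2 - \zeta\alpha\, e_V^T(L\otimes I_m)e_V - \gamma\mu\, e_X^T(L\otimes I_m)e_X - \gamma\alpha\, e_X^T(L\otimes I_m)e_V$. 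The hypothesis $\gamma/(\alpha\zeta)<\lambda_2[L]$ enters exactly here: since $e_V\perp\mathbf{1}_N$, Lemma \ref{eigvalue} gives $e_V^T(L\otimes I_m)e_V\ge\lambda_2[L]\norm{e_V}_2^2$, so $\gamma\norm{e_V}_2^2-\zeta\alpha\,e_V^T(L\otimes I_m)e_V\le -(\zeta\alpha\lambda_2[L]-\gamma)\norm{e_V}_2^2<0$. Passing to the eigenbasis of $L$ (equivalently applying Lemma \ref{schur} mode by mode) and taking $\mu$ sufficiently large, the full quadratic remainder is rendered negative definite while $W$ stays positive definite in $(e_X,e_V)$.

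The signum, adaptive-gain and disturbance terms are then handled verbatim as in Proposition \ref{propsingledis1}, with $\sigma$ playing the role that $e_X$ played there. Using the identity $\sigma^T(D'\otimes I_m)\text{sgn}([D^T\otimes I_m]\sigma) = \sum_i\sum_{j\in N_i}\frac{\beta_{ij}}{2}\norm{\sigma_i-\sigma_j}_1$ with $\sigma_i = \gamma e_{X_i}+\zeta e_{V_i}$, noting that $\dot{\beta}_{ij} = \norm{\sigma_i-\sigma_j}_1$, and invoking Assumption \ref{as-bound-double}, these contributions collapse into $\big(\frac{(N-1)\bar{\phi}}{2}-\bar{\beta}\big)\sqrt{\lambda_2[L]}\,\norm{\sigma}_2$, which is made strictly negative by choosing $\bar{\beta}>\frac{(N-1)\bar{\phi}}{2}$. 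Hence $\dot{W}\le 0$, giving $e_X,e_V\in{\mathcal L}_{\infty}$; integrating the resulting bound on $\dot{W}$ shows $e_X,e_V\in{\mathcal L}_{2}$, and Barbalat's Lemma \cite{sheida22j} then yields $e_X,e_V\to\mathbf{0}$, i.e. $x_i=x_j$ and $v_i=v_j$ for all $i,j\in{\mathcal I}$ as $t\to\infty$.

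The main obstacle is the design of the quadratic form. The cross-term coefficient must be tuned so that its time derivative reproduces \emph{exactly} the combined error $\sigma$ against which the signum acts, so that the robust single-integrator argument transfers unchanged, while simultaneously keeping $W$ positive definite and its quadratic derivative negative definite. Reconciling these competing requirements is what forces both the spectral condition $\gamma/(\alpha\zeta)<\lambda_2[L]$ (through Lemma \ref{eigvalue}) and a sufficiently large $\mu$ (through the block positivity of Lemma \ref{schur}); verifying that two-by-two block positivity across all nonzero Laplacian modes is the only genuinely new computation relative to Proposition \ref{propsingledis1}.
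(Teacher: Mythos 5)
Your overall architecture---the error dynamics in $(e_X,e_V)$, the combined error $\sigma=\gamma e_X+\zeta e_V$ against which the signum and the adaptive gains act, the choice $\bar{\beta}>\frac{(N-1)\bar{\phi}}{2}$ to dominate the $(\Pi\otimes I_m)\Phi$ term, and the $\mathcal{L}_2$/Barbalat finish---is exactly the paper's. The gap is in the quadratic part of your Lyapunov function. You take the weight on $e_X^T(L\otimes I_m)e_X$ to be $\mu\zeta$, whereas the paper's matrix $P$ in \eqref{lyapdbldis} uses $\alpha\gamma+\mu\zeta$. That extra $\alpha\gamma$ is not cosmetic: differentiating your $W$, the term $\mu\zeta\, e_X^T(L\otimes I_m)e_V$ cancels only the $-\mu\zeta\, e_V^T(L\otimes I_m)e_X$ coming from $\zeta e_V^T\dot{e}_V$, so the cross term $-\gamma\alpha\, e_X^T(L\otimes I_m)e_V$ produced by $\gamma e_X^T\dot{e}_V$ survives, exactly as you record in your ``quadratic remainder.'' With the paper's coefficient this cross term cancels identically, and the remainder collapses to $-\gamma\mu\, e_X^T(L\otimes I_m)e_X+e_V^T(\gamma I_{mN}-\alpha\zeta L\otimes I_m)e_V\le 0$, which is nonpositive under $\frac{\gamma}{\alpha\zeta}<\lambda_2[L]$ alone.

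You then dispose of the surviving cross term, and of the positive definiteness of your $W$, by ``taking $\mu$ sufficiently large.'' That is not available: the proposition asserts consensus for \emph{every} positive $\mu$, subject only to connectivity, Assumption \ref{as-bound-double}, and $\frac{\gamma}{\alpha\zeta}<\lambda_2[L]$. Concretely, positive definiteness of your quadratic form on the consensus subspace requires (via Lemmas \ref{eigvalue} and \ref{schur}) $\mu\zeta\lambda_2[L]>\gamma^2/\zeta$, and negative semidefiniteness of your remainder requires, mode by mode, $\mu(\alpha\zeta\lambda_k-\gamma)\ge\frac{\gamma\alpha^2\lambda_k}{4}$ for every nonzero eigenvalue $\lambda_k$ of $L$; neither inequality follows from the stated hypotheses. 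By contrast, with the paper's $P$ the Schur condition $\zeta(\alpha\gamma+\mu\zeta)\lambda_2[L]>\gamma^2$ is already implied by $\alpha\gamma\lambda_2[L]>\gamma^2/\zeta$, i.e.\ by the spectral condition, for any $\mu>0$. So your proof as written establishes the proposition only under an additional largeness condition on $\mu$; the fix is simply to replace the coefficient $\mu\zeta$ by $\alpha\gamma+\mu\zeta$ in the $e_X^T(L\otimes I_m)e_X$ term, after which your argument coincides with the paper's.
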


\begin{proof}
The closed-loop system \eqref{double} with the control input \eqref{udbldis1} can be recast into a compact form as
\begin{equation} \label{closedbl1}
\begin{cases} \dot{X} =& V \\ 
\dot{V} =&-(L\otimes I_m)(\mu X+\alpha V)+\Phi\\ &- (D' \otimes I_m) \text{sgn}\big( [D^T \otimes I_m][\gamma X+\zeta V]\big), \end{cases}
\end{equation}
where $D'$ is defined in Definition \ref{def-new-lap}, and $D$  and $L$ are defined in Section II. Now, we can rewrite (\ref{closedbl1}) as
\begin{equation} \label{errordbldis1}
\begin{cases} \dot{e}_X =& e_V \\ 
\dot{e}_V =&-(L\otimes I_m)(\mu e_X+\alpha e_V)+(\Pi\otimes I_m) \Phi\\ &- (D' \otimes I_m) \text{sgn}\big( [D^T \otimes I_m][\gamma X+\zeta V]\big).
\end{cases} 
\end{equation}

Define the function \begin{equation} \label{lyapdbldis}
W= \frac{1}{2}\left( {\begin{array}{cc} e_X \\ e_V \\ \end{array} } \right)^T P \left( {\begin{array}{cc} e_X \\ e_V \\ \end{array} } \right)+\frac{1}{2}\sum_{i=1}^{N}\sum_{j \in N_i} (\beta_{ij}-\bar{\beta})^2,
\end{equation} where $P=\left( {\begin{array}{cc} (\alpha \gamma+ \mu \zeta )(L\otimes I_m) & \gamma I_{mN} \\ \gamma I_{mN} & \zeta I_{mN} \\ \end{array} } \right)$ and $\bar{\beta}>0$ is to be selected. To prove the positive definiteness of $P$, we define $\hat{P}=\left( {\begin{array}{cc} (\alpha \gamma+ \mu \zeta )(\lambda_2[L] I_{mN}) & \gamma I_{mN} \\ \gamma I_{mN} & \zeta I_{mN} \\ \end{array} } \right)$. By using Lemma \ref{eigvalue}, we obtain that $\hat{P} \leq P$. Hence we just need to show $\hat{P}>0$. Now, applying Lemma \ref{schur}, $\hat{P}>0$ if $\zeta (\alpha \gamma +\mu \zeta)\lambda_2[L] I_N - \gamma^2 I_N>0$, which is equivalent to $\frac{\gamma}{\alpha \zeta}< \lambda_2[L]$. 

The time derivative of $W$ along (\ref{errordbldis1}) can be obtained as
\begin{equation*}
\begin{split}
&\dot{W}=-\gamma \mu e_X^T (L\otimes I_m) e_X + e_V^T (\gamma I_{mN} - \alpha \zeta L\otimes I_m) e_V\\
&-\big(\gamma e_X+\zeta e_V\big)^T \bigg([D' \otimes I_m]\text{sgn} \big([D^T\otimes I_m] [\gamma e_X+\zeta e_V]\big)\bigg)\\
& +\big(\gamma e_X+\zeta e_V\big)^T( \Pi\otimes I_m) \Phi\nonumber
+\frac{1}{2}\sum_{i=1}^{N}\sum_{j \in N_i} (\beta_{ij}-\bar{\beta})\dot{\beta}_{ij}\\
&=-\gamma \mu e_X^T (L\otimes I_m) e_X + e_V^T (\gamma I_{mN} - \alpha \zeta L\otimes I_m) e_V\\
& -\frac{1}{2} \sum_{i=1}^{N}\sum_{j \in N_i} \beta_{ij} \norm{\gamma(e_{X_i}-e_{X_j})+\zeta(e_{V_i}-e_{V_j})}_1 \\
&+\frac{1}{N}\sum_{i=1}^{N}\sum_{j=1}^{N}\big[\gamma(e_{X_i}-e_{X_j})+\zeta(e_{V_i}-e_{V_j})\big]\phi_i \\
&+\frac{1}{2}\sum_{i=1}^{N}\sum_{j \in N_i} (\beta_{ij}-\bar{\beta})\dot{\beta}_{ij}\\ 
&\leq -\gamma \mu e_X^T (L\otimes I_m) e_X + e_V^T (\gamma I_{mN} - \alpha \zeta L\otimes I_m) e_V\\
&- \frac{1}{2} \sum_{i=1}^{N}\sum_{j \in N_i}\beta_{ij} \norm{\gamma(e_{X_i}-e_{X_j})+\zeta(e_{V_i}-e_{V_j})}_1\\
&+\frac{1}{2N}\sum_{i=1}^{N}\sum_{j=1}^{N}\norm{\gamma(e_{X_i}-e_{X_j})+\zeta(e_{V_i}-e_{V_j})}_1\norm{\phi_i-\phi_j}\\
& +\frac{1}{2}\sum_{i=1}^{N}\sum_{j \in N_i}(\beta_{ij}-\bar{\beta})\norm{\gamma(e_{X_i}-e_{X_j})+\zeta(e_{V_i}-e_{V_j})}_1
\end{split}
\end{equation*}
\begin{equation*}
\begin{split}
&\leq -\gamma \mu e_X^T (L\otimes I_m) e_X + e_V^T (\gamma I_{mN} - \alpha \zeta L\otimes I_m) e_V\\
&+( \frac{(N-1)\bar{\phi}}{4}-\frac{\bar{\beta}}{2}) \sum_{i=1}^{N}\sum_{j \in N_i}\norm{\gamma(e_{X_i}-e_{X_j})+\zeta(e_{V_i}-e_{V_j})}_1,
\end{split}
\end{equation*} \normalsize
where the last inequality is obtained because $\mathcal{G}$ is connected and Assumption \ref{as-bound-double} holds. The term $ e_V^T (\gamma I_{mN} - \alpha \zeta L\otimes I_m) e_V<0$ if $\gamma I_{N} - \alpha \zeta L<0$. By applying Lemma \ref{eigvalue}, we know that $\gamma I_{N} - \alpha \zeta L<0$ if $\gamma - \alpha \zeta \lambda_2[L]<0$. Select $\bar{\beta}$ such that $\bar{\beta}  \geq\frac{ (N-1)\bar{\phi}}{2}$. Using an argument similar to \eqref{local-lambda2}, we have
\begin{equation} \label{local-lambdadbl}
\begin{split} \small
\dot{W}\leq& ( \frac{(N-1)\bar{\phi}}{4}-\frac{\bar{\beta}}{2}) \sum_{i=1}^{N}\sum_{j \in N_i}\norm{\gamma(e_{X_i}-e_{X_j})+\zeta(e_{V_i}-e_{V_j})}_1\\ 
=&(\frac{(N-1)\bar{\phi}}{2}-\bar{\beta})\big(\gamma e_X+\zeta e_V\big)^T\\ 
&\bigg([D' \otimes I_m]\text{sgn} ([D^T\otimes I_m] [\gamma e_X+\zeta e_V])\bigg) \\
&\leq(\frac{(N-1)\bar{\phi}}{2}-\bar{\beta}) \norm{(D^T \otimes I_m)(\gamma e_X+\zeta e_V)}_1\\
&\leq (\frac{(N-1)\bar{\phi}}{2}-\bar{\beta})\sqrt{\lambda_{2}[L]} \norm{(\gamma e_X+\zeta e_V)}_2< 0,
\end{split}
\end{equation}\normalsize
where in the last inequality the fact that $L=DD^T$ and Lemma \ref{eigvalue} have been used. Therefore, having $W\geq 0$ and $\dot{W} \leq 0$, we can conclude that $e_X, e_V \in {\mathcal L}_{\infty}$. By integrating both sides of \eqref{local-lambdadbl}, we can see that  $e_X, e_V \in \mathcal{L}_{2}$. Now, applying Barbalat's Lemma \cite{sheida22j}, we obtain that $e_X$ and $e_V$ will converge to zero asymptotically and hence the agents reach consensus as $t\rightarrow\infty$.
\end{proof}

\begin{theorem} \label{theorem1}
Suppose that the graph ${\mathcal G}$ is connected, and Assumptions \ref{as-exist}, \ref{as1each} and \ref{as-bound-double} hold. If $H_i(x_i,t)=H_j(x_j,t), \ \forall i,j \in {\mathcal I}$, and $\frac{\gamma}{\alpha \zeta}< \lambda_2[L]$
hold, by employing the algorithm (\ref{udbldis1}) for the system (\ref{double}), the optimization goal \eqref{goal} is achieved.
\end{theorem}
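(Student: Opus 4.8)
The plan is to reduce the distributed problem to the centralized double-integrator result already established in Theorem \ref{theoremdoublecentral}, by passing to aggregate (summed) quantities, exactly as Theorem \ref{theoremsingledis1} reduced the single-integrator case. First I would invoke Proposition \ref{propdbldis1}, whose hypotheses (connectivity, Assumption \ref{as-bound-double}, and $\frac{\gamma}{\alpha\zeta}<\lambda_2[L]$) are all in force, to obtain consensus, i.e. $x_i-x_j\to 0$ and $v_i-v_j\to 0$ as $t\to\infty$. This settles the consensus half of \eqref{costdis}; the remaining task is to show that the common trajectory converges to the minimizer.

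Next I would form the summed dynamics. Summing the closed loop \eqref{double}--\eqref{udbldis1} over $i\in\mathcal I$, every coupling term disappears: the linear terms vanish because $\mathbf{1}_N^T L=0$, and the adaptive signum terms cancel pairwise because the graph is undirected ($\beta_{ij}=\beta_{ji}$ while the signum argument is antisymmetric in $(i,j)$). What remains is $\sum_i\dot v_i=\sum_i\phi_i$ together with $\sum_i\dot x_i=\sum_i v_i$. Writing $g\triangleq\sum_j\nabla f_j(x_j,t)$, $V_\Sigma\triangleq\sum_j v_j$, and $H$ for the common Hessian, the key observation is that under the identical-Hessian hypothesis $\sum_i\phi_i$ collapses to exactly the centralized control \eqref{udoublecent} with $\nabla f_0,H_0$ replaced by $g,H$. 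Indeed, every operator coefficient in $\phi_i$ from \eqref{phi-double} ($H_i^{-1}$, $H_i$, and $H_i^{-1}[\tfrac{d}{dt}H_i]H_i^{-1}$) is common across agents --- the equality $H_i(x_i,t)=H_j(x_j,t)$ for all $t$, differentiated in $t$, also forces $\tfrac{d}{dt}H_i$ to be common --- so the summation distributes over the gradient quantities. Consequently the pair $(g,V_\Sigma)$ obeys precisely the centralized closed-loop equations $\tfrac{d}{dt}g=H V_\Sigma+\sum_j\tfrac{\partial}{\partial t}\nabla f_j$ and $\dot V_\Sigma=\sum_i\phi_i$.

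Having matched the centralized structure, I would transcribe the Lyapunov argument of Theorem \ref{theoremdoublecentral}: define $W=\tfrac12 g^T g+\tfrac12(S_\Sigma-V_\Sigma)^T(S_\Sigma-V_\Sigma)$ with $S_\Sigma=H^{-1}(\sum_j\tfrac{\partial}{\partial t}\nabla f_j+g)$, and compute $\dot W=-g^T g\le 0$. This yields $g,\,S_\Sigma-V_\Sigma\in\mathcal L_\infty$ and, after integrating, $g\in\mathcal L_2$, so Barbalat's Lemma gives $g\to 0$. Finally I would combine the two conclusions: consensus sends every $x_i$ to a common $\bar x(t)$, and uniform continuity of the gradients (which the bounded-Hessian conditions underlying Assumption \ref{as-bound-double} provide) transfers $\sum_j\nabla f_j(x_j,t)\to 0$ into $\sum_j\nabla f_j(\bar x,t)=\nabla\big(\sum_j f_j\big)(\bar x,t)\to 0$; convexity of the team cost together with Assumption \ref{as-exist} and Lemma \ref{lem-intro-gradzero} then identifies $\bar x$ with $x^*$, establishing \eqref{goal}.

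The main obstacle I expect is the second step --- rigorously justifying that $\sum_i\phi_i$ reproduces the centralized control \eqref{udoublecent} in the aggregate variables. This hinges entirely on the identical-Hessian assumption being used not just for $H_i$ but also, via $t$-differentiation, for $\tfrac{d}{dt}H_i$, and on the commutation of the $\tfrac{d}{dt}$ and $\tfrac{\partial}{\partial t}$ operators appearing in \eqref{phi-double}; any mismatch there would leave residual cross-terms that spoil the clean $\dot W=-g^T g$. The nonsmoothness from the signum function is a non-issue here, being dispatched by the Filippov/set-valued-Lie-derivative discussion in Remark \ref{fillipov}, and the cancellation of the consensus terms, while requiring some care with the adaptive gains $\beta_{ij}$, is routine given the undirected graph.
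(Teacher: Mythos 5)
Your proposal is correct and follows essentially the same route as the paper: invoke Proposition \ref{propdbldis1} for consensus, sum the closed loop to obtain $\sum_j\dot v_j=\sum_j\phi_j$, and run the Lyapunov function $W=\tfrac12\|\sum_j\nabla f_j\|^2+\tfrac12\|\sum_j v_j-\sum_j S_j\|^2$ (identical, under the common-Hessian hypothesis, to your $g$/$S_\Sigma$ formulation) to get $\dot W=-\|\sum_j\nabla f_j\|^2$, Barbalat, and then convexity with Lemma \ref{lem-intro-gradzero}. Your explicit framing of the aggregate dynamics as an instance of the centralized Theorem \ref{theoremdoublecentral}, and your flagging of the $\tfrac{d}{dt}H_i$ and derivative-commutation points, are just a more transparent presentation of the same calculation the paper performs.
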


\begin{proof}
Define the Lyapunov function candidate
\begin{equation}
\begin{split}
W&=\frac{1}{2}(\sum_{j=1}^N \nabla f_j(x_j,t))^T (\sum_{j=1}^N \nabla f_j(x_j,t))\\&+\frac{1}{2} (\sum_{j=1}^N v_j -\sum_{j=1}^N S_j )^T (\sum_{j=1}^N v_j -\sum_{j=1}^N S_j ),
\end{split}
\end{equation}
where $S_j \defeq {H_j}^{-1}(x_j,t) \big( \frac{\partial }{\partial t} \nabla f_j(x_j,t) +  \nabla f_j(x_j,t)\big)$.
Note that ${\mathcal G}$ is undirected. By summing both sides of the closed-loop system \eqref{double} with the controller \eqref{udbldis1}, we have $\sum_{j=1}^N \dot{v}_j = \sum_{j=1}^N \phi_j$. The time derivative of $W$ along with the system defined by \eqref{double} and \eqref{udbldis1} can be obtained as

\begin{equation*} \small
\begin{split}
\dot{W}=& (\sum_{j=1}^N \nabla f_j(x_j,t))^T \big(\sum_{j=1}^N H_j(x_j,t) v_j+ \sum_{j=1}^N \frac{\partial }{\partial t} \nabla f_j(x_j,t)\big)\\&+ (\sum_{j=1}^N v_j -\sum_{j=1}^N S_j )^T (\sum_{j=1}^N \dot{v}_j -\sum_{j=1}^N \dot{S}_j )\\
=& (\sum_{j=1}^N \nabla f_j(x_j,t))^T \big(\sum_{j=1}^N H_j(x_j,t) v_j+ \sum_{j=1}^N \frac{\partial }{\partial t} \nabla f_j(x_j,t)\big)\\&+ (\sum_{j=1}^N v_j -\sum_{j=1}^N S_j )^T (\sum_{j=1}^N \phi_j -\sum_{j=1}^N \dot{S}_j ) \\
=& (\sum_{j=1}^N \nabla f_j(x_j,t))^T \big(\sum_{j=1}^N H_j(x_j,t) v_j+ \sum_{j=1}^N \frac{\partial }{\partial t} \nabla f_j(x_j,t)\big)\\&- (\sum_{j=1}^N v_j -\sum_{j=1}^N S_j )^T (\sum_{j=1}^N H_j(x_j,t)\nabla f_j(x_j,t)). \nonumber
\end{split}
\end{equation*} \normalsize
Now, under the assumption of identical Hessians, we have
\begin{equation} \small \label{wdot}
\begin{split}
\dot{W}=&(\sum_{j=1}^N \nabla f_j(x_j,t))^T \big(H_j(x_j,t)[\sum_{j=1}^N  v_j]+ \sum_{j=1}^N \frac{\partial }{\partial t} \nabla f_j(x_j,t)\big)\\
& - (\sum_{j=1}^N v_j -\sum_{j=1}^N S_j )^T \big(H_j(x_j,t)[\sum_{j=1}^N \nabla f_j(x_j,t)]\big)\\
=&-(\sum_{j=1}^N \nabla f_j(x_j,t))^T (\sum_{j=1}^N \nabla f_j(x_j,t)).
\end{split}
\end{equation} \normalsize
Therefore, $\dot{W}<0$ for $\sum_{j=1}^N \nabla f_j(x_j,t) \neq 0$. Now, having $W\geq 0$ and $\dot{W} \leq 0$, we can conclude that $\sum_{j=1}^N \nabla f_j(x_j,t), \big(\sum_{j=1}^N v_j -\sum_{j=1}^N S_j\big) \in \mathcal{L}_{\infty}$. By integrating both sides of \eqref{wdot}, we can see that  $\sum_{j=1}^N \nabla f_j(x_j,t) \in \mathcal{L}_{2}$. Now, applying Barbalat's Lemma \cite{sheida22j}, we obtain that $\sum_{i=1}^N \nabla f_i(x,t)$ will converge to zero asymptomatically. We also have $x_i=x_j,  v_i=v_j, \ \forall i,j \in {\mathcal I}$ as $t\rightarrow \infty$ from Proposition \ref{propdbldis1}.
Now, under the assumption that $\sum_{i=1}^N f_i(x,t)$ is convex and using Lemma \ref{lem-intro-gradzero}, it is easy to see from Assumption \ref{as-exist} that as $t\rightarrow \infty$, $\sum_{j=1}^N f_i(x_j,t)$ will be minimized, where $x_i=x_j,\ \forall i,j \in {\mathcal I}$.
\end{proof}

\begin{remark} \label{Remarkbounddouble}
In Appendix \ref{AppendixA},  we show that Assumption \ref{as-bound-double} holds if the cost functions with identical Hessians satisfy certain conditions such that the boundedness of $\norm{x_i-x_j}_2$ and $\norm{v_i-v_j}_2$ for all $t$ guarantees the boundedness of  $\norm{\nabla f_j(x_j,t) - \nabla f_i(x_i,t) }_2$, $\norm{\frac{d }{dt}\nabla  f_j(x_j,t)-\frac{d }{dt}\nabla  f_i(x_i,t)}_2$ and $\norm{\frac{\partial^2  }{\partial t^2}\nabla  f_j(x_j,t)-\frac{\partial^2 }{\partial t^2}\nabla  f_i(x_i,t)}_2$ for all $t$.
For example, for the cost functions $f_i(x_i,t)=(ax_i+g_i(t))^{2}$ introduced in Remark \ref{Remarkboundsingle},  the boundedness of $\norm{x_i-x_j}_2$ and $\norm{v_i-v_j}_2$ for all $t$ guarantees the boundedness of  $\norm{\nabla f_j(x_j,t) - \nabla f_i(x_i,t) }_2$, $\norm{\frac{d }{dt}\nabla  f_j(x_j,t)-\frac{d }{dt}\nabla  f_i(x_i,t)}_2$ and $\norm{\frac{\partial^2  }{\partial t^2}\nabla  f_j(x_j,t)-\frac{\partial^2 }{\partial t^2}\nabla  f_i(x_i,t)}_2$ for all $t$, if $\norm{g_i(t) -g_j(t)}_2, \ \norm{\dot{g}_i(t) -\dot{g}_j(t)}_2$ and $\norm{\ddot{g}_i(t) -\ddot{g}_j(t)}_2$ are bounded.
Hence Assumption \ref{as-bound-double} holds for $f_i(x_i,t)=(ax_i+g_i(t))^{2}$,  if $\norm{g_i(t) -g_j(t)}_2, \ \norm{\dot{g}_i(t) -\dot{g}_j(t)}_2$ and $\norm{\ddot{g}_i(t) -\ddot{g}_j(t)}_2, \forall t$ and $\forall i,j \in {\mathcal I}$ are bounded.
\end{remark}
\begin{remark} \label{remark-notidentHessian}
The result in Theorem 4.2 can be extended to a class of cost functions whose Hessians have the same structure rather than being identical under a certain additional assumption. Particularly, the assumption that $H_i(x_i, t) = H_j(x_j , t), \forall t$ and $\forall i, j \in \mathcal{I},$ can be replaced with $H_i(z,t)=H_j(z,t), \ \forall z, t$ and $\forall i, j \in \mathcal{I}$, with an additional assumption that $\norm{\nabla f_i(x_i,t)}_2$ and $\norm{ \frac{\partial }{\partial t} \nabla f_i(x_i,t)}_2, \  \forall i \in \mathcal{I}$ and $\forall t,$ are bounded.
\end{remark}

To relax the assumption on Hessians, an estimator-based algorithm will be introduced in Subsection \ref{subsecdblestim}, where the agents can have cost functions with nonidentical Hessians.
 
\subsection{Estimator-Based Distributed Time-Varying Convex Optimization} \label{subsecdblestim}
\normalsize
In this subsection, an estimator-based algorithm is designed to solve the problem \eqref{costdis} for double-integrator dynamics \eqref{double}.
In this algorithm, each agent calculates \eqref{sum-min-double1} in a distributed manner. Similar to Subsection \ref{secsingledis2}, distributed average tracking is used as a tool to estimate the unknown variables in \eqref{sum-min-double1}.
Each agent generates an estimate of \eqref{sum-min-double1}. Then by using the control input $u_i(t)$, each agent tracks its estimated signal while reaching consensus. The proposed algorithm for the system (\ref{double}) is given by
\begin{equation} \label{koldbl1}
\dot{\xi}_i= \kappa \sum_{j \in N_i(t)} \text{sgn}(w_j - w_i),\ \ w_i= \xi_i+  \psi_i 
  \end{equation}
   \begin{equation} \label{koldbl2}
\dot{\phi}_i=\rho \sum_{j \in N_i(t)} \text{sgn}(\varsigma_j -\varsigma_i),\ \ \ \varsigma_i=\phi_i+\theta_i 
  \end{equation}
\begin{equation}
  \begin{split} \label{chahardbl} \small
&S_i= \varsigma_{i1}^{-1} \varsigma_{i2} \varsigma_{i1} \big( w_{i1}+w_{i2} \big)- \varsigma_{i1}^{-1} \big( w_{i3}+w_{i4} \big) - \varsigma_{i1} w_{i1} \\
  \end{split}  \normalsize
  \end{equation}
  \begin{equation} \label{contestdbl}
\begin{split}
u_i&=-\sum_{j \in N_i(t)}\text{sig}(x_i-x_j)^{\alpha_1} -\sum_{j \in N_i(t)}\text{sig}(v_i-v_j)^{\alpha_1} +S_i, 
\end{split}
\end{equation}
where \begin{equation} \label{alpha12}
0<\alpha_1<1,\ \ \ \ \ \alpha_2=\frac{2\alpha_1}{\alpha_1+1}
\end{equation} 
and
\begin{equation*}
\psi_i=\begin{pmatrix}
  {\nabla} f_i(x_i,t) \\ \vspace{+.15cm}
    \frac{{\partial}}{{\partial t}}\nabla f_i(x_i,t) \\ \vspace{+.15cm}
    \frac{{d}}{{d t}}\nabla f_i(x_i,t) \\ \vspace{+.15cm}
    \frac{\partial}{\partial t}\frac{d}{dt}\nabla f_i(x_i,t)
        \end{pmatrix}, \
\theta_i=\begin{pmatrix}
   H_i(x_i,t)\\ \vspace{+.15cm}
    \frac{{d}}{{d t}}H_i(x_i,t)
        \end{pmatrix},
\end{equation*}
and $\kappa$ and $\rho$ are positive constant coefficients to be selected. Eqs. \eqref{koldbl1} and \eqref{koldbl2} are distributed average tracking estimators, where the estimated variables $w_i$ and $\varsigma_i$ can be redefined as $w_i^T=(w_{i1}^T, \hdots, w_{i4}^T)$ and $\varsigma_i^T=(\varsigma_{i1}^T, \varsigma_{i2}^T)$ with $w_{ij} \in  \mathbb{R}^m ,\varsigma_{ik} \in  \mathbb{R}^{m\times m}, \forall i \in  {\mathcal I}, j=1,...,4, k=1,2$. In implementing \eqref{chahardbl}, $\varsigma_{i1}$ can be projected on the space of positive-definite matrices, which ensures that $\varsigma_{i1}$ remains nonsingular. The initial values of the internal states $\xi_i$ and $\phi_i$ are chosen such that the condition \footnote{As a special case the initial values can be chosen as $\xi_j(0)=\phi_j(0)=0, \forall j \in {\mathcal I}$.}
\begin{align} \label{initializedbl} \small
\sum_{j=1}^N \xi_j(0)=\sum_{j=1}^N \phi_j(0)=0
\end{align} \normalsize
hols.
\begin{assumption} \label{ass2}
The coefficients $\kappa,$ and $\rho$ satisfy the following inequalities: $\kappa > \sup_{t} \norm{\psi_i}_\infty$ and $\rho > \sup_{t} \norm{\theta_i}_\infty, \ \forall i \in {\mathcal I}$.
\end{assumption}

These assumptions can be satisfied if the graph ${\mathcal G}(t)$ is connected for all time, the gradients, the derivatives of the Hessians and gradients, and the partial derivatives of the gradients' derivatives are bounded. Although these assumptions seem restrictive, they can be satisfied for many cost functions. For example, for $f_i(x_i,t)=(a_ix_i+g_i(t))^{2}$ introduced in Remark \ref{assumption-explanation}, the above conditions are equivalent to the conditions that $\norm{g_i(t)}_2, \norm{\dot{g}_i(t)}_2,$ and $\norm{\ddot{g}_i(t)}_2$ are bounded.

\begin{theorem}
Suppose that the graph ${\mathcal G}(t)$ is connected for all $t$ and Assumptions \ref{as-exist} and \ref{as1} hold. If Assumption \ref{ass2} and the initial condition \eqref{initializedbl} hold, by employing the algorithm  \eqref{koldbl1}-\eqref{contestdbl} for the system (\ref{double}), the optimization goal \eqref{goal} is achieved.
\end{theorem}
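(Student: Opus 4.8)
The plan is to reuse the two-stage separation-principle argument from the proof of Theorem~\ref{theoestsing}, upgrading the controller stage from single-integrator to double-integrator finite-time consensus; the separation is legitimate precisely because the distributed average tracking estimators converge in \emph{finite} time, so that after a finite transient the controller sees a common, correctly computed drift. For the estimator stage I would invoke the finite-time convergence of \eqref{koldbl1}--\eqref{koldbl2}: under Assumption~\ref{ass2} and the initial condition \eqref{initializedbl}, Theorem~2 of \cite{boundv} provides a finite time $T>0$ such that, for all $t\ge T$ and all $i\in{\mathcal I}$, the estimates equal the exact averages componentwise, $w_{i1}=\frac{1}{N}\sum_{j}\nabla f_j$, $\varsigma_{i1}=\frac{1}{N}\sum_j H_j$, and likewise for $w_{i2},w_{i3},w_{i4},\varsigma_{i2}$. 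Substituting these into \eqref{chahardbl} shows that every $S_i$ collapses to a single common signal $\bar S$ reproducing the centralized law \eqref{sum-min-double1}; Assumption~\ref{as1} guarantees that $\frac{1}{N}\sum_j H_j$ is invertible for $t\ge T$, so the projection of $\varsigma_{i1}$ onto the positive-definite cone returns $\varsigma_{i1}$ itself and $\bar S$ is well defined.

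For the controller stage, for $t\ge T$ the closed loop \eqref{double}--\eqref{contestdbl} reduces to a double-integrator system forced by the \emph{common} drift $\bar S$. Mirroring the single-integrator change of variables that produced \eqref{snglcons}, I would set $\tilde x_i=x_i-p(t)$ and $\tilde v_i=v_i-\dot p(t)$, where $p$ solves $\ddot p=\bar S$ on $[T,\infty)$. Since $\bar S$ is identical across agents, the relative states are untouched, $x_i-x_j=\tilde x_i-\tilde x_j$ and $v_i-v_j=\tilde v_i-\tilde v_j$, so the shifted variables obey the autonomous finite-time consensus protocol obtained from \eqref{contestdbl} with the exponents related by \eqref{alpha12}. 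Invoking the finite-time consensus result for double-integrator dynamics then yields a further finite time $T'$ after which $\tilde x_i=\tilde x_j$ and $\tilde v_i=\tilde v_j$; hence $x_i=x_j$, $v_i=v_j$, and $\dot v_i=\bar S$ for all $i,j\in{\mathcal I}$ and all $t\ge T+T'$.

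Once $x_i=x_j$, $v_i=v_j$, and $\dot v_i=\bar S$, the common trajectory satisfies exactly the centralized double-integrator law \eqref{sum-min-double1}; the extension of Theorem~\ref{theoremdoublecentral} then drives $\sum_{i=1}^N\nabla f_i$ to zero, and convexity together with Lemma~\ref{lem-intro-gradzero} and Assumption~\ref{as-exist} delivers the optimization goal \eqref{goal}. I expect the controller stage to be the main obstacle: establishing \emph{finite-time} consensus in both position and velocity for the double-integrator protocol is substantially more delicate than the single-integrator consensus of \eqref{snglcons}, and it is exactly where the exponent relation $\alpha_2=\tfrac{2\alpha_1}{\alpha_1+1}$ in \eqref{alpha12} becomes essential (via a homogeneity argument of negative degree). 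A secondary point deserving care is the $N$-dependent bookkeeping when the averaged estimates are substituted into \eqref{chahardbl}, to confirm that the factors carried by $\varsigma_{i1}^{-1}$ and by the remaining averaged terms combine so as to return \eqref{sum-min-double1} rather than a rescaled version of it.
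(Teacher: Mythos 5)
Your proposal follows essentially the same route as the paper's proof: finite-time convergence of the distributed average tracking estimators via Theorem~2 of \cite{boundv} so that $S_i$ collapses to a common $\bar S$ reproducing \eqref{sum-min-double1}, then the change of variables $\tilde x_i = x_i - \int_T^t\int_T^t \bar S\,\mathrm{dt}$, $\tilde v_i = v_i - \int_T^t \bar S\,\mathrm{dt}$ (your $p(t)$ with $\ddot p = \bar S$) reducing the closed loop to the autonomous protocol \eqref{dblcons}, whose finite-time consensus the paper simply cites from \cite{HongJ} rather than re-deriving via homogeneity. The two points you flag as delicate are handled by citation and by assertion in the paper, respectively --- in particular the paper states \eqref{Sidouble} as exactly \eqref{sum-min-double1} without the $N$-dependent bookkeeping you correctly identify as worth checking.
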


\begin{proof}
\textbf{Estimator}: It follows from Theorem 2 in \cite{boundv} that if $\kappa > \sup_{t} \norm{\psi_i}_\infty, \rho > \sup_{t} \norm{\theta_i}_\infty, \ \forall i \in {\mathcal I}$ and the graph ${\mathcal G}(t)$ is connected for all $t$, then employing \eqref{koldbl1} and \eqref{koldbl2}, there exists a $T>0$ such that for all $t\geq T$, $\norm{w_i-\frac{1}{N}\sum_{i=1}^N \psi_i}_2=0,  \norm{\varsigma_i-\frac{1}{N}\sum_{i=1}^N \theta_i}_2=0.$ Note that for $t \geq T$, $\varsigma_{i1}$ is nonsingular without projection due to Assumption \ref{as1} and hence the projection operation simply returns $\varsigma_{i1}$ itself. Now from \eqref{chahardbl}, for $t \geq T$, the estimated signal $S_i$ satisfies 
\begin{equation}\footnotesize \label{Sidouble} 
\begin{split}
S_i= -\big(\sum_{i=1}^N H_i(x_i,t)\big) \big(\sum_{i=1}^N&\nabla f_i(x_i,t)\big)\\
-\big(\sum_{i=1}^N H_i(x_i,t)\big)^{-1}\big(\sum_{i=1}^N &\frac{\partial}{\partial t} \frac{d }{dt}\nabla f_i(x_i,t)+\sum_{i=1}^N \frac{d }{dt}\nabla f_i(x_i,t) \big) \\
+\bigg([\sum_{i=1}^N H_i(x_i,t)]^{-1} [\sum_{i=1}^N&\frac{d}{dt} H_i(x_i,t)][\sum_{i=1}^N H_i(x_i,t)]\bigg)\\
& \big(\sum_{i=1}^N \nabla f_i(x_i,t)+\sum_{i=1}^N\frac{\partial }{\partial t}\nabla f_i(x_i,t)\big),\\
\end{split}
\end{equation} \normalsize
which shows that each agent has an estimate of \eqref{sum-min-double1}, where $S_i=S_j,\ \forall i,j \in {\mathcal I} , \forall t\geq T$.

\textbf{Controller}: Note that for $t\geq T, \ S_i=S_j, \ \forall i,j \in {\mathcal I},$ denoted as $\bar{S}$. For $t\geq T$ using \eqref{contestdbl} for \eqref{double}, we have
\begin{equation} \label{contestdblafterT} \small
\begin{split}
\dot{v}_i=&-\sum_{j \in N_i(t)}\text{sig}(x_i-x_j)^{\alpha_1} -\sum_{j \in N_i(t)}\text{sig}(v_i-v_j)^{\alpha_2}+\bar{S}.
\end{split} \normalsize
\end{equation}
For $t\geq T$, rewriting \eqref{contestdblafterT} using new variables $\tilde{x}_i= x_i- \int_T^t\int_T^t \bar{S} \text{dt}$ and $\tilde{v}_i= v_i- \int_T^t \bar{S} \text{dt}$, we have
\begin{equation} \label{dblcons}
\begin{split}
\dot{\tilde{v}}_i=&-\sum_{j \in N_i(t)}\text{sig}(\tilde{x}_i-\tilde{x}_j)^{\alpha_1} -\sum_{j \in N_i(t)}\text{sig}(\tilde{v}_i-\tilde{v}_j)^{\alpha_2}.
\end{split}
\end{equation}
It is proved in \cite{HongJ} that using \eqref{dblcons}, there exists a time $T'$ such that $\tilde{x}_i=\tilde{x}_j,\ \tilde{v}_i=\tilde{v}_j,\ \forall i,j \in {\mathcal I}$. As a result we have $x_i=x_j,\ v_i=v_j, \ \forall i,j \in {\mathcal I},$ and $\dot{v}_i = \bar{S}$, $\forall \ t\geq T+T'$. Now, it is easy to see that according to \eqref{sum-min-double1} and \eqref{Sidouble} and Assumption \ref{as-exist}, the optimization goal \eqref{goal} is achieved.
\end{proof}
\begin{remark} \label{pros-cons-dbl}
In the algorithm \eqref{koldbl1}-\eqref{contestdbl}, it is just required that Assumptions \ref{as1} and \ref{ass2} hold, where Assumption \ref{as1each} does not necessarily hold. However, each agent must communicate two variables $\psi_i \in  \mathbb{R}^{4m}$ and $\theta_i \in  \mathbb{R}^{2m \times m}$ with its neighbors, which necessitates the communication requirement. On the other hand, for the algorithm \eqref{udbldis1}, not only Assumptions \ref{as1each} and the conditions in Remark \ref{Remarkbounddouble} have to be satisfied for each individual function $f_i(x_i,t)$, it requires the agents' Hessians to be equal. In spite of these restrictive assumptions, using \eqref{udbldis1}, we can eliminate the necessity of communication between neighbors when the relative positions and velocities between each agent and its neighbors can be obtained by sensing.
\end{remark}

In what follows we will study how to overcome the possible chattering effect of implementing the signum function in the algorithm \eqref{udbldis1}. In Subsections \ref{subsecdblsgntimevar} and \ref{subsecdblsgninv}, two continuous control algorithms will be proposed to extend \eqref{udbldis1}.
\subsection{Distributed Time-Varying Convex Optimization Using Time-Varying Approximation of Signum Function} \label{subsecdblsgntimevar}
In this subsection, we focus on distributed time-varying convex optimization for double-integrator dynamics \eqref{double}, where a continuous control algorithm based on the boundary layer concept will be introduced. Using a continuous approximation of the signum function will reduce chattering in real applications and make the controller easier to implement.
In this algorithm, each agent needs to know its own position, velocity and the relative positions and velocities between itself and its neighbors. Define the nonlinear function $h(\cdot)$ as
\begin{equation} \label{hvaring} \small h(z)=\frac{z}{\norm{z}_2+\epsilon e^{-c t}},
\end{equation}\normalsize
where $c$ and $\epsilon$ are positive coefficients and $z \in  \mathbb{R}^m$. The nonlinear function $h(z)$ is a continuous approximation, using the boundary layer concept \cite{zhao20}, of the discontinuous function sgn$(\cdot)$. The size of boundary layer, $\epsilon e^{-c t}$ is time-varying and as $t\rightarrow\infty$ the continuous function $h(z)$ approaches the signum function. The idea of using this continuous approximation is borrowed from \cite{signaprox, newcite}.   

By replacing the signum function with a continuous approximation \eqref{hvaring}, the results presented in Subsection \ref{subsecdbldis1} are not valid anymore and it is not clear whether the new algorithm works. The reason is that the results (and the proofs) in Subsection \ref{subsecdbldis1} build upon the property of the ideal discontinuous signum function, which switches instantaneously at $0$, so that it can compensate for the effect of the inconsistent internal time-varying optimization signals among the agents so that the agents can reach consensus. However, this can no longer be achieved by its continuous replacement and further careful analysis is needed. The results and the proofs presented in this subsection are not just a simple replacement of the signum function with its approximation. Here in particular we show that with the signum function replaced with the time-varying continuous approximation, \eqref{hvaring}, it is possible to still achieve distributed optimization with zero error under certain different assumptions and conditions. The reason is that \eqref{hvaring} approaches the signum function as $t\to \infty$.

The continuous control input with adaptive gains is proposed for \eqref{double} as
\begin{equation} \label{udbldissgnvary}
\begin{split}
u_i=& -\sum_{j \in N_i} \mu (x_i-x_j)+ \alpha (v_i-v_j)\\& -\sum_{j \in N_i} \beta _{ij}\ h(\gamma  [x_i-x_j]+ \zeta [v_i-v_j]) +\phi_i,\\
\dot{\beta}_{ij}=&\big(\gamma[x_i-x_j]+\zeta[v_i-v_j]\big) h(\gamma  [x_i-x_j]+ \zeta [v_i-v_j]),
\end{split}
\end{equation}
where $\mu, \alpha, \gamma$ and $\zeta$ are positive coefficients, $\beta_{ij}$ is a varying gain with $\beta_{ij}(0)=\beta_{ji}(0)\geq 0$, and $\phi_i$ is defined as in \eqref{udbldis1}.

\begin{theorem} \label{theoremsgnvar}
Suppose that the graph ${\mathcal G}$ is connected, and \begin{align}
 \frac{\gamma}{\alpha \zeta}+ \frac{\psi}{\alpha} < \lambda_2[L] \label{con3} \\
 \frac{\mu}{2\alpha}>\psi \label{con1}\\
 \frac{\gamma}{2 \zeta}>\psi \label{con4}
\end{align}
hold, where $\psi>0$ is a parameter to be selected. If Assumptions \ref{as-exist}, \ref{as1each} and \ref{as-bound-double} hold and $H_i(x_i,t)=H_j(x_j,t), \forall t$ and $\forall i,j \in {\mathcal I}$, by employing the algorithm (\ref{udbldissgnvary}) for the system (\ref{double}), the optimization goal \eqref{goal} is achieved.
\end{theorem}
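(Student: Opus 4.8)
The plan is to establish consensus first and then reuse the optimization argument, mirroring the two-stage structure of Proposition \ref{propdbldis1} and Theorem \ref{theorem1} but carrying through the modifications forced by replacing the signum with the continuous map $h$. First I would recast the closed loop of \eqref{double} under \eqref{udbldissgnvary} in the compact error form \eqref{errordbldis1}, with $h$ applied blockwise in place of the signum, and adopt the same quadratic-plus-adaptive Lyapunov candidate $W$ as in \eqref{lyapdbldis} with the same matrix $P$. Positive definiteness of $P$ follows from Lemma \ref{schur} exactly as before; since condition \eqref{con3} is strictly stronger than the bound $\frac{\gamma}{\alpha\zeta}<\lambda_2[L]$ used there, $P>0$ is immediate.

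The decisive elementary fact is that for any vector $z$, writing $z_{ij}=\gamma(x_i-x_j)+\zeta(v_i-v_j)$, we have $z^T h(z)=\frac{\norm{z}_2^2}{\norm{z}_2+\epsilon e^{-ct}}\ge\norm{z}_2-\epsilon e^{-ct}$, so $h$ behaves like the signum up to an additive error $\epsilon e^{-ct}$ that is integrable on $[0,\infty)$. Because the adaptive law in \eqref{udbldissgnvary} is now $\dot\beta_{ij}=z_{ij}^T h(z_{ij})$, differentiating $W$ produces the same $\beta_{ij}$-cancellation as in Proposition \ref{propdbldis1}, collapsing the control and adaptation contributions to $-\frac{\bar\beta}{2}\sum z_{ij}^T h(z_{ij})$; the identity above turns this into $-\frac{\bar\beta}{2}\sum\norm{z_{ij}}_2$ plus an $O(e^{-ct})$ term. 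The internal-signal term is bounded using Assumption \ref{as-bound-double} and connectivity, and $\bar\beta$ is taken large, exactly as before.

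The quadratic part $-\gamma\mu e_X^T(L\otimes I_m)e_X+e_V^T(\gamma I_{mN}-\alpha\zeta L\otimes I_m)e_V$ must now be retained and, via Lemma \ref{eigvalue}, the strict-margin conditions \eqref{con1}, \eqref{con3} and \eqref{con4} are what guarantee it is negative definite on the consensus-error subspace with a quantitative margin proportional to $\psi$ — this is where these three new inequalities are consumed, since unlike the signum, $h$ supplies only a weak, quadratically small restoring action near consensus and the linear coupling terms must compensate. One then arrives at $\dot W\le -Q(e_X,e_V)+Ce^{-ct}$ with $Q$ positive definite on the consensus subspace. Integrating and using integrability of $Ce^{-ct}$ shows $W$ is bounded, hence $e_X,e_V\in\mathcal L_\infty$ and the gains are bounded, and then $e_X,e_V\in\mathcal L_2$; since $\dot e_X=e_V$ and $\dot e_V$ is bounded ($\Phi$ bounded by Assumption \ref{as-bound-double}, $h$ bounded, states bounded), Barbalat's Lemma yields $e_X,e_V\to 0$, i.e. position and velocity consensus. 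For the optimization step I would note that the graph is undirected and $h$ is odd, so the interaction terms cancel pairwise and $\sum_{j=1}^N\dot v_j=\sum_{j=1}^N\phi_j$ still holds; under identical Hessians the Lyapunov function of Theorem \ref{theorem1} then gives $\dot W=-\norm{\sum_j\nabla f_j(x_j,t)}_2^2$, so $\sum_j\nabla f_j\to 0$, and combined with consensus, Lemma \ref{lem-intro-gradzero}, convexity, and Assumption \ref{as-exist}, the goal \eqref{goal} follows.

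The main obstacle is the loss of the exact instantaneous compensation that the signum provided. I expect the crux to be showing that the residual left by $h$ is genuinely harmless, which requires \emph{both} the integrable bound $z^T h(z)\ge\norm{z}_2-\epsilon e^{-ct}$ and the strictly negative-definite quadratic margin secured by \eqref{con1}, \eqref{con3}, \eqref{con4}, and then marrying an integrable forcing term to a Barbalat argument to conclude exact zero steady-state error rather than a mere residual bound.
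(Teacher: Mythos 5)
Your proposal is correct, but it closes the consensus argument by a genuinely different mechanism than the paper, so a comparison is worthwhile. The shared core is the inequality $z^{T}h(z)=\frac{\|z\|_2^2}{\|z\|_2+\epsilon e^{-ct}}\ge \|z\|_2-\epsilon e^{-ct}$, which the paper also uses (implicitly, in passing from the $\frac{\norm{y_i-y_j}_2^2}{\norm{y_i-y_j}_2+\epsilon e^{-ct}}$ terms to \eqref{dww}); both arguments thereby reduce the $h$-term, the adaptive-gain term and the $\Phi$-mismatch term to a single integrable residual $Ce^{-ct}$. Where you diverge is in the Lyapunov matrix and the final step. The paper does \emph{not} reuse the $P$ of \eqref{lyapdbldis}: it shifts the $(1,1)$ block by $-2\psi\gamma I_{mN}$ (see \eqref{lypapWdbl}) precisely so that $\bar W+\psi W$ becomes block diagonal, and then all three conditions \eqref{con3}--\eqref{con4} are consumed twice — once for $P>0$ and once for $\bar W<-\psi W$ — yielding $\dot W\le -\psi W+Ce^{-ct}$ and convergence via the comparison result (Lemma 2.19 in \cite{Qu}). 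You instead keep the unshifted $P$, retain the quadratic form $-\gamma\mu e_X^T(L\otimes I_m)e_X+e_V^T(\gamma I_{mN}-\alpha\zeta L\otimes I_m)e_V$ as the sole dissipative term, and conclude by integrating, extracting $e_X,e_V\in\mathcal L_2\cap\mathcal L_\infty$, and applying Barbalat. This is sound and more elementary (no comparison lemma), but it buys less: it gives no decay rate, whereas the paper's route yields an explicit rate governed by $\psi$ and $c$ and thereby explains why $\psi$ appears as a tunable parameter. A consequence is that your description of the role of the hypotheses is slightly off: in your route only \eqref{con3} (indeed only $\frac{\gamma}{\alpha\zeta}<\lambda_2[L]$) is actually needed for both $P>0$ and negative definiteness of the quadratic form on the consensus subspace, so \eqref{con1} and \eqref{con4} are not "consumed" there; since they are hypotheses of the theorem this is not a gap, merely an inaccuracy of attribution. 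Two small points to tighten: the bounded quantity supplied by Assumption \ref{as-bound-double} is $(\Pi\otimes I_m)\Phi$ (whose $i$th block is $\frac{1}{N}\sum_{\ell}(\phi_i-\phi_\ell)$), not $\Phi$ itself, which is all you need for boundedness of $\dot e_V$; and the pairwise cancellation giving $\sum_j\dot v_j=\sum_j\phi_j$ requires $\beta_{ij}(t)=\beta_{ji}(t)$ for all $t$, which holds because $\dot\beta_{ij}$ in \eqref{udbldissgnvary} is symmetric in $i,j$ and $\beta_{ij}(0)=\beta_{ji}(0)$ — worth stating explicitly. The optimization stage of your argument coincides with the paper's.
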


\begin{proof}
Define $e_X$ and $e_V$ as in \eqref{exev} and $y$ as $y^T=( y_1^T,\hdots, y_N^T)= \gamma e_X^T+\zeta e_V^T$, with $y_i \in  \mathbb{R}^m$. Rewriting the closed-loop system \eqref{double} using \eqref{udbldissgnvary} in terms of the consensus errors, we have
\begin{equation} \label{errordbldissgnvar}
\begin{cases} \dot{e}_X =& e_V \\ 
\dot{e}_V =&-(L\otimes I_m)(\mu e_X+\alpha e_V)\\ &-\begin{pmatrix}
 \sum_{j \in N_1}  \beta_{1j} h\big(y_1-y_j\big) \\
    \vdots \\
   \sum_{j \in N_N}  \beta_{Nj} h\big(y_N-y_j\big)
 \end{pmatrix} +(\Pi\otimes I_m) \Phi.
\end{cases} 
\end{equation} 

Define the function 
\begin{equation} \label{lypapWdbl} \small W= \frac{1}{2}\left( {\begin{array}{cc} e_X \\ e_V \\ \end{array} } \right)^T P \left( {\begin{array}{cc} e_X \\ e_V \\ \end{array} } \right)+\frac{1}{2}\sum_{i=1}^{N}\sum_{j \in N_i} (\beta_{ij}-\bar{\beta})^2,\end{equation}\normalsize
where $P=\left( {\begin{array}{cc} -2\psi \gamma I_{Nm}+(\alpha \gamma+ \mu \zeta )(L\otimes I_m) & \gamma I_{mN} \\ \gamma I_{mN} & \zeta I_{mN} \\ \end{array} } \right)$, and $\bar{\beta}>0$ is to be selected. To prove the positive definiteness of $P$, we also define $\hat{P}=\left( {\begin{array}{cc} -2\psi \gamma I_{mN}+(\alpha \gamma+ \mu \zeta )(\lambda_2[L]I_{mN}) & \gamma I_{mN} \\ \gamma I_{mN} & \zeta I_{mN} \\ \end{array} } \right)$. By using Lemma \ref{eigvalue}, we obtain that $\hat{P} \leq P$. Hence we just need to show $\hat{P}>0$. Using Lemma \ref{schur}, we know that $\hat{P}>0$ if
\begin{equation} \label{condpos1}
\begin{split}
 &-2\psi \gamma I_{N} +(\alpha \gamma+ \mu \zeta )\lambda_2[L])I_{N}>0\\
 &-2\zeta \psi \gamma I_{N}+\zeta(\alpha \gamma+ \mu \zeta )\lambda_2[L]-\gamma^2 I_N>0.\\
\end{split}
\end{equation}
Now, using conditions \eqref{con3} and \eqref{con1}, respectively, we have
\begin{equation*}
\begin{split}
 &-2\psi \gamma I_{N} +(\alpha \gamma+ \mu \zeta )\lambda_2[L]I_{N}\\
 &>-2\psi \gamma I_{N}+\frac{(\alpha \gamma+ \mu \zeta )\gamma}{\alpha \zeta}I_{N}=\frac{\gamma}{\alpha \zeta}(-2\psi \alpha \zeta+ \zeta \mu +\alpha \gamma^2) I_{N}\\
&> \frac{\gamma}{\alpha \zeta} (-\zeta \mu +\zeta \mu +\alpha \gamma)I_{N}=\frac{\gamma^2}{ \zeta} >0,
\end{split}
\end{equation*}
which guarantees that the first inequality in \eqref{condpos1} holds. Applying a similar procedure, we have
\begin{equation*}
\begin{split}
&-2\zeta \psi \gamma I_{N}+\zeta(\alpha \gamma+ \mu \zeta )\lambda_2[L]-\gamma^2 I_N\\
&>  -2\zeta \psi \gamma I_{N}+(\alpha \gamma+ \mu \zeta )\frac{\gamma}{\alpha}I_N-\gamma^2 I_N\\
&=\frac{\zeta \gamma}{\alpha}(-2\alpha \psi +\mu) I_N>0.
\end{split}
\end{equation*}
Hence $W$ is positive definite.

The time derivative of $W$ along (\ref{errordbldissgnvar}) can be obtained as
\vspace{-.1cm}
\begin{equation} \label{dreaWsgn} \small
\begin{split}
&\dot{W}=-\gamma \mu e_X^T (L\otimes I_m) e_X + e_V^T (\gamma I_{mN} - \alpha \zeta L\otimes I_m) e_V\\
&-2\psi \gamma e_X^T e_V- y^T \begin{pmatrix}
  \sum_{j \in N_1}\beta_{1j} h\big(y_1-y_j\big) \\
    \vdots \\
   \sum_{j \in N_N} \beta_{Nj} h\big(y_N-y_j\big)
 \end{pmatrix}\\
 &+y^T( \Pi\otimes I_m) \Phi+\frac{1}{2}\sum_{i=1}^{N}\sum_{j \in N_i} (\beta_{ij}-\bar{\beta})\dot{\beta}_{ij}.
\end{split}
\end{equation} \normalsize
We rewrite \eqref{dreaWsgn} as $\dot{W}=\bar{W}- \sum_{i=1}^{N}\sum_{j \in N_i}\beta_{ij} y_i h(y_i- y_j)+y^T( \Pi\otimes I_m) \Phi+\frac{1}{2}\sum_{i=1}^{N}\sum_{j \in N_i} (\beta_{ij}-\bar{\beta})\dot{\beta}_{ij}$, where $\bar{W}=\left( {\begin{array}{cc} e_X \\ e_V \\ \end{array} } \right)^T \bar{P} \left( {\begin{array}{cc} e_X \\ e_V \\ \end{array} } \right)$ and $\bar{P}=\left( {\begin{array}{cc} -\mu \gamma (L\otimes I_m) & - \psi \gamma I_{mN} \\ -\psi \gamma I_{mN} & \gamma I_{mN}-\alpha \zeta (L\otimes I_m) \\ \end{array} } \right)$.
Because the graph $\mathcal{G}$ is connected, we have
\begin{equation*} \small
\begin{split}
& \dot{W}=\bar{W}-\frac{1}{2}\sum_{i=1}^{N}\sum_{j \in N_i}\beta_{ij} (y_i-y_j) h(y_i- y_j)\\
 &+\frac{1}{N}\sum_{i=1}^{N}\sum_{j=1}^{N}(y_i-y_j)\phi_i +\frac{1}{2}\sum_{i=1}^{N}\sum_{j \in N_i}(\beta_{ij}-\bar{\beta})(y_i-y_j) h(y_i- y_j)\\
 &=\bar{W}+\frac{1}{2N}\sum_{i=1}^{N}\sum_{j=1}^{N}(y_i-y_j)(\phi_i-\phi_j)\\
 &-\frac{\bar{\beta}}{2}\sum_{i=1}^{N}\sum_{j \in N_i}(y_i-y_j) h(y_i- y_j)\\
 &\leq \bar{W}+\frac{1}{2N}\sum_{i=1}^{N}\sum_{j=1}^{N}\norm{y_i-y_j}_2\norm{\phi_i-\phi_j}_2\\
 &-\frac{\bar{\beta}}{2} \sum_{i =1}^N \sum_{j \in N_i} \frac{\norm{y_i-y_j}_2^2}{\norm{y_i-y_j}_2+\epsilon e^{-c t}}\\
 &\leq \bar{W}+ \frac{(N-1)\bar{\phi}}{4}\sum_{i=1}^{N}\sum_{j \in N_i}\norm{y_i-y_j}_2\\
 &-\frac{\bar{\beta}}{2} \sum_{i =1}^N \sum_{j \in N_i} \frac{\norm{y_i-y_j}_2^2}{\norm{y_i-y_j}_2+\epsilon e^{-c t}},
\end{split}
\normalsize\end{equation*} \normalsize
where in the last inequality Assumption \ref{as-bound-double} is used. Selecting a $\bar{\beta}$ such that $\bar{\beta}\geq \frac{ (N-1)\bar{\phi}}{2}$, we obtain
\begin{equation} \label{dww} \small 
\begin{split}
\dot{W} <\bar{W} +\frac{\bar{\beta}}{2} \sum_{i =1}^N \sum_{j \in N_i} \epsilon e^{-c t}.
\end{split}
\end{equation}\normalsize
If we can show $\bar{W}<-\psi W$ (or equivalently $\bar{W}+\psi W<0$), then knowing that $e^{-c t} \to 0$ as $t \to \infty$, Lemma 2.19 in \cite{Qu}
implies that the system \eqref{errordbldissgnvar} is asymptotically stable. Note that $\bar{W}+\psi W=$
\begin{equation}  \label{barwpsiw}
\left( {\begin{array}{cc} (-\gamma\mu+ \alpha \gamma \psi+\zeta \mu \psi) L-2\psi^2 \gamma I_{N}  & 0 \\ 0& (\gamma+ \zeta \psi)I_{N}-\alpha \zeta L \\ \end{array} } \right).
\end{equation}
Applying Lemma \ref{schur}, we obtain that \eqref{barwpsiw} is negative definite if
\begin{equation}  \label{condneg1}
\begin{split}
& (-\gamma\mu+ \alpha \gamma \psi+\zeta \mu \psi) L-2\psi^2 \gamma I_{N}<0 \\
& (\gamma+ \zeta \psi)I_{N}-\alpha \zeta L<0.
\end{split}
\end{equation} \normalsize 
To satisfy the first condition in \eqref{condneg1}, we just need to show $-\gamma\mu+ \alpha \gamma \psi+\zeta \mu \psi<0$. 
Using conditions \eqref{con1} and \eqref{con4}, we have $-\gamma\mu+ \alpha \gamma \psi+\zeta \mu \psi< -\gamma\mu+ \frac{\mu \gamma}{2} +\zeta \mu \psi< -\gamma\mu+ \frac{\mu \gamma}{2} +\frac{\mu \gamma}{2} <0$.
To satisfy the second condition in \eqref{condneg1}, we have $(\gamma+ \zeta \psi)I_{N}-\alpha \zeta L<(\gamma+ \zeta \psi)I_{N}-\alpha \zeta \lambda_2[L]I_{N}<0$, 
where Lemma \ref{eigvalue} and condition \eqref{con3} are employed, respectively. Hence, $\bar{W}<-\psi W $ holds and the agents reach consensus as $t\rightarrow \infty$. Now, similar to the proof of Theorem \ref{theorem1}, if $H_i(x_i,t)=H_j(x_j,t), \forall t$ and $\forall i,j \in {\mathcal I}$, it can be shown that $\sum_{j=1}^N \nabla f_j(x_j,t)$ will converge to zero asymptomatically. Now, under Assumption \ref{as-exist} and the assumption that $\sum_{i=1}^N f_i(x,t)$ is convex, Lemma \ref{lem-intro-gradzero} is employed. Using the fact that $x_i(t)\to x_j(t), \forall i,j \in {\mathcal I}$ as $t \to \infty$, it is easy to see that the optimization goal \eqref{goal} is achieved.
\end{proof}
\begin{remark}
It is worth mentioning that if we have $\frac{\gamma}{\alpha \zeta}<\lambda_2[L]$, there always exists a positive coefficient $\psi$ such that conditions \eqref{con3}-\eqref{con4} hold. However, selecting $\psi$ based on conditions \eqref{con3}-\eqref{con4} affects the convergence speed, where by having a larger $\psi$ the agents reach consensus faster. To satisfy conditions \eqref{con3} and \eqref{con4}, it is sufficient to have $\frac{2\lambda_2[L] \alpha}{3}> \frac{\gamma}{2\zeta}> \psi $ (e.g, selecting a large $\alpha$ and choosing proper $\gamma, \zeta$ and $\psi$). It can also be seen that selecting a large enough $\mu$, \eqref{con1} can be satisfied.
\end{remark}

\begin{remark} 
The results in Appendix \ref{AppendixA} can be modified for Theorem \ref{theoremsgnvar}, where we can show that Assumption \ref{as-bound-double} holds under the same conditions mentioned in Remark \ref{Remarkbounddouble}. 
\end{remark}
\subsection{Distributed Time-Varying Convex Optimization Using Time-Invariant Approximation of Signum Function} \label{subsecdblsgninv}
In this subsection, our focus is on replacing the signum function, with a time-invariant approximation
\begin{equation} \label{hinv} \small h(z)=\frac{z}{\norm{z}_2+\epsilon},
\end{equation}\normalsize
where $\epsilon>0$. Here, the boundary layer $\epsilon$ is constant. Employing \eqref{hinv}, instead of \eqref{hvaring} in the control algorithm \eqref{udbldissgnvary} makes the controller easier to implement in real applications. The trade-off is that the agents will no longer reach consensus, which introduces additional complexities in convergence analysis. Establishing analysis on the optimization error bound in this case is a nontrivial task, which is introduced in this subsection.
The reason that the time-invariant continuous approximation \eqref{hinv} cannot ensure distributed optimization with zero error is that the global optimal trajectory is not even an equilibrium point of the closed-loop system whenever a time-invariant continuous approximation is introduced. It is worthwhile to mention that if the signum function were replaced with a different time-invariant continuous approximation other than \eqref{hinv}, there would be no guarantee that the same conclusion in this subsection would still hold and further careful analysis would be needed.

\begin{theorem} \label{theoremsgninv}
Suppose that the graph ${\mathcal G}$ is connected, Assumptions \ref{as-exist}, \ref{as1each} and \ref{as-bound-double} hold and the gradients of the cost functions can be written as $\nabla f_i(x_i,t)=\sigma x_i+g_i(t), \ \forall i \in {\mathcal I}$, where $\sigma$ and $g_i(t)$ are, respectively, a positive coefficient and a time-varying function. If conditions \eqref{con3}-\eqref{con4} hold, using (\ref{udbldissgnvary}) with $h(\cdot)$ given by \eqref{hinv} for (\ref{double}), we have
\begin{equation} \label{xvbound}   
\lim_{t \to \infty}[\frac{1}{N}\sum_{i=1}^N x_i - x^*]=0, \quad \lim_{t \to \infty}[\frac{1}{N}\sum_{i=1}^N v_i - v^*]=0,
\end{equation} \normalsize
where $x^*$ and $v^*$ are the position and the velocity of the optimal trajectory, respectively. In addition, the agents track the optimal trajectory with bounded errors such that as $t \to \infty$
\begin{equation} \label{resulttheosgninv} \small 
\begin{split} 
&\norm{x_i-x^*}_2  <\sqrt{\frac{\bar{\phi} N(N-1)^2\epsilon}{4\psi \lambda_\text{min}[P]}},\\
&\norm{ v_i-v^*}_2  <\sqrt{\frac{\bar{\phi} N(N-1)^2\epsilon}{4\psi \lambda_\text{min}[P]}}, \ \ \forall i \in {\mathcal I},
\end{split} \end{equation} \normalsize where $P$ is defined after \eqref{lypapWdbl}.
\end{theorem}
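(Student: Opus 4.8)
The plan is to split each agent's tracking error as $x_i - x^* = e_{X_i} + (\bar x - x^*)$, where $e_{X_i}$ is the consensus error from \eqref{exev} and $\bar x \triangleq \frac{1}{N}\sum_{\ell=1}^N x_\ell$ is the team center, and to handle the two pieces by different mechanisms. First I would show that the center $(\bar x,\bar v)$ tracks the optimal trajectory \emph{exactly} in the limit, which is precisely \eqref{xvbound}; then I would show that the consensus errors $e_{X_i},e_{V_i}$ stay \emph{ultimately bounded} by the $\epsilon$-dependent quantity on the right of \eqref{resulttheosgninv}. Combining the two by the triangle inequality and letting $t\to\infty$ delivers \eqref{resulttheosgninv}.

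For the center, I would first note that the adaptive gains remain symmetric, $\beta_{ij}(t)=\beta_{ji}(t)$: since $h$ in \eqref{hinv} is odd and $z^T h(z)\ge 0$, the law in \eqref{udbldissgnvary} gives $\dot\beta_{ij}=\dot\beta_{ji}$, and by hypothesis $\beta_{ij}(0)=\beta_{ji}(0)$. Summing the closed loop \eqref{double}--\eqref{udbldissgnvary} over $i$, the anti-symmetric linear coupling $\mu(x_i-x_j)+\alpha(v_i-v_j)$ and the terms $\beta_{ij}h(y_i-y_j)$ cancel pairwise over the undirected edges, leaving $\dot{\bar v}=\frac{1}{N}\sum_{i=1}^N\phi_i$. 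Here the linear-gradient hypothesis $\nabla f_i=\sigma x_i+g_i(t)$ is essential: it forces $H_i=\sigma I$ constant and identical, so \eqref{phi-double} collapses to $\phi_i=-\tfrac1\sigma(\ddot g_i+\sigma v_i+\dot g_i)-\sigma(\sigma x_i+g_i)$, which is \emph{affine} in $(x_i,v_i)$, and averaging yields a closed expression in $(\bar x,\bar v)$ only. Using $\sum_i\nabla f_i(x^*,t)=0$, i.e. $\sum_i g_i=-\sigma N x^*$ together with its time derivatives for $v^*,\dot v^*$, the change of variables $\tilde x=\bar x-x^*$, $\tilde v=\bar v-v^*$ reduces the averaged dynamics to the autonomous linear system $\dot{\tilde x}=\tilde v$, $\dot{\tilde v}=-\sigma^2\tilde x-\tilde v$, whose characteristic polynomial $s^2+s+\sigma^2$ is Hurwitz. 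Hence $\tilde x,\tilde v\to 0$, establishing \eqref{xvbound}.

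For the consensus bound I would reuse the function $W$ in \eqref{lypapWdbl} and repeat the derivation of $\dot W$ from the proof of Theorem \ref{theoremsgnvar} up to the dissipation term, which with $h$ from \eqref{hinv} becomes $-\tfrac{\bar\beta}{2}\sum_i\sum_{j\in N_i}\frac{\norm{y_i-y_j}_2^2}{\norm{y_i-y_j}_2+\epsilon}$, where $y=\gamma e_X+\zeta e_V$. Applying the elementary identity $\frac{a^2}{a+\epsilon}=a-\frac{\epsilon^2}{a+\epsilon}\ge a-\epsilon$ and choosing $\bar\beta\ge\frac{(N-1)\bar\phi}{2}$ as before, the terms linear in $\norm{y_i-y_j}_2$ cancel and only a constant residual survives, giving $\dot W\le\bar W+\tfrac{\bar\beta}{2}\sum_i\sum_{j\in N_i}\epsilon$, which I would upper bound by $\bar W+\frac{\bar\phi N(N-1)^2\epsilon}{4}$. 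Conditions \eqref{con3}--\eqref{con4} make $\bar W$ strictly dominated by the quadratic part of $W$ exactly as in Theorem \ref{theoremsgnvar}, so a comparison/ISS argument produces the ultimate bound on the quadratic part of $W$, and since $\tfrac12\lambda_\text{min}[P]\big(\norm{e_X}_2^2+\norm{e_V}_2^2\big)$ lower-bounds it while $\norm{e_{X_i}}_2\le\norm{e_X}_2$, each consensus error is ultimately bounded by the expression in \eqref{resulttheosgninv}.

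Finally, because $\bar x\to x^*$ and $\bar v\to v^*$, the inequality $\norm{x_i-x^*}_2\le\norm{e_{X_i}}_2+\norm{\bar x-x^*}_2$ shows that asymptotically $\norm{x_i-x^*}_2$ inherits the consensus bound, and similarly for the velocity. I expect the main obstacle to lie in the consensus-bound step, where the adaptive gains complicate the standard ultimate-boundedness argument: since the residual error does not vanish, $\dot\beta_{ij}$ stays positive and the gains need not converge, so one must be careful to extract an ultimate bound on the quadratic (consensus) part of $W$ rather than on $W$ itself. The center-tracking step, while conceptually the place where the linear-gradient assumption is genuinely indispensable—without it $\frac{1}{N}\sum_i\phi_i$ would not decouple from the individual states and exact center tracking would fail—is comparatively clean once the symmetry-induced cancellations are observed.
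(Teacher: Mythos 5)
Your proposal is correct, and its overall architecture (consensus error bounded via the Lyapunov function \eqref{lypapWdbl}, center tracking handled separately, triangle inequality to combine) is the same as the paper's. The consensus-bound half is essentially identical to the paper's argument: the same $\dot W \le \bar W + \tfrac{\bar\beta}{2}\sum_i\sum_{j\in N_i}\epsilon$, the same use of $\bar W < -\psi W$ from \eqref{con3}--\eqref{con4}, and the same comparison-lemma ultimate bound; your worry about extracting the bound from the quadratic part of $W$ rather than $W$ itself (because of the nonnegative adaptive-gain term) is legitimate and is handled, somewhat tacitly, the same way in the paper. Where you genuinely diverge is the center-tracking half. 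The paper reuses the Lyapunov function of Theorem \ref{theorem1}, namely $\tfrac12\|\sum_j\nabla f_j\|_2^2+\tfrac12\|\sum_j v_j-\sum_j S_j\|_2^2$, shows $\dot W=-\|\sum_j\nabla f_j\|_2^2$ after the pairwise cancellation, and invokes Barbalat's lemma to get $\sum_j\nabla f_j(x_j,t)\to 0$, which under $\nabla f_i=\sigma x_i+g_i$ is equivalent to $\bar x\to x^*$. You instead exploit the linear-gradient hypothesis directly: $\phi_i$ becomes affine in $(x_i,v_i)$, the averaged error dynamics reduce to $\dot{\tilde x}=\tilde v$, $\dot{\tilde v}=-\sigma^2\tilde x-\tilde v$ with Hurwitz polynomial $s^2+s+\sigma^2$, and exponential convergence follows. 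Your route is more elementary and actually yields a stronger (exponential) convergence rate for the center, at the price of being tied to the linear-gradient form; the paper's Lyapunov route is the one that generalizes to merely identical (non-constant) Hessians, which is why the paper reuses it here. Two cosmetic slips on your side, neither of which affects the result: the identity should read $\frac{a^2}{a+\epsilon}=a-\frac{a\epsilon}{a+\epsilon}\ge a-\epsilon$ (your numerator $\epsilon^2$ is a typo), and if you carry the factor $\tfrac12$ in the quadratic form consistently you obtain a $2$ rather than a $4$ in the denominator of \eqref{resulttheosgninv} — a constant-factor discrepancy that is already present in the paper's own manipulation.
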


\begin{proof}
The proof will be separated into two parts. In the first part, we show that the consensus error will remain bounded. Then in the second part, we show that the error between the agents' states and the optimal trajectory will remain bounded. 

Define the Lyapunov function candidate $W$ as in \eqref{lypapWdbl}. Similar to the proof in Theorem \ref{theoremsgnvar}, with $h(\cdot)$ given by \eqref{hinv} instead of \eqref{hvaring}, we obtain $\dot{W} <\bar{W} +\frac{\bar{\beta}}{2} \sum_{i =1}^N \sum_{j \in N_i} \epsilon< -\psi W+\frac{\bar{\beta}}{2}N(N-1)\epsilon,$ where $\bar{\beta}$ is selected such that $\bar{\beta}\geq \frac{ (N-1)\bar{\phi}}{2}$. Then we have $0 \leq W(t)\leq \frac{\bar{\beta} N(N-1)\epsilon}{2\psi} (1-e^{-\psi t})+ W(0)e^{-\psi t}$.
Therefore, as $t\rightarrow \infty$, we have 
\begin{equation*}  \begin{split} \norm{\left( {\begin{array}{cc} e_X \\ e_V \\ \end{array} } \right)}_2^2 \lambda_{\text{min}} [P] &\leq W=  \left( {\begin{array}{cc} e_X \\ e_V \\ \end{array} } \right)^T P \left( {\begin{array}{cc} e_X \\ e_V \\ \end{array} } \right)\\ &\leq \frac{\bar{\beta} N(N-1)\epsilon}{2\psi}. \end{split} \end{equation*} \normalsize
Now, it can be seen that there exists a bound on the position and velocity consensus errors as $t \to \infty$, that is,
\begin{equation} \label{bounderrorsgninv}  \small 
\begin{split} 
&\norm{x_i - \frac{1}{N} \sum_{j=1}^N x_j}_2  <\sqrt{\frac{\bar{\phi} N(N-1)^2\epsilon}{4\psi \lambda_\text{min} [P]}},\\
&\norm{v_i - \frac{1}{N} \sum_{j=1}^N v_j}_2 <\sqrt{\frac{\bar{\phi} N(N-1)^2\epsilon}{4\psi \lambda_\text{min} [P]}},
\end{split} \end{equation} \normalsize
where it is easy to see that by selecting larger $\psi$ satisfying conditions \eqref{con3}-\eqref{con4}, the error bound will be smaller.
 
In what follows, we focus on finding the relation between the optimal trajectory and the agents' states. According to Assumption \ref{as-exist} and using Lemma \ref{lem-intro-gradzero}, we know $\sum_{j=1}^N \nabla f_j(x^*,t)=0$. Hence, under the assumption of $\nabla f_i(x_i,t)=\sigma x_i+g_i(t)$, the optimal trajectory is
\begin{equation} \label{xvoptimal}  \small
\begin{split} 
& x^*=\frac{-\sum_{j=1}^N g_j}{N \sigma}, \hspace*{.5cm} v^*=\frac{-\sum_{j=1}^N \dot{g}_j}{N \sigma}.
\end{split} \end{equation} \normalsize
Similar to the proof of Theorem \ref{theorem1}, we can show that, regardless of whether consensus is reached or not, it is guaranteed that $\sum_{j=1}^N \nabla f_j(x_j,t)$ will converge to zero asymptomatically. As a result, we have $ \sum_{j=1}^N x_i \to \frac{-\sum_{j=1}^N g_j}{ \sigma}$ and $\sum_{j=1}^N v_i \to \frac{-\sum_{j=1}^N \dot{g}_j}{\sigma}$. By using \eqref{xvoptimal}, we can conclude \eqref{xvbound}. According to \eqref{bounderrorsgninv}, it follows that \eqref{resulttheosgninv} holds.
\end{proof}
\begin{remark} \label{comparevarandinv}
Using the invariant approximation of the signum function \eqref{hinv}, instead of the time-varying one \eqref{hvaring}, makes the implementation easier in real applications. However, the results show that the team cost function is not exactly minimized and the agents track the optimal trajectory with a bounded error. It also restricts the acceptable cost functions to a class that takes in the form $\nabla f_i(x_i,t)=\sigma x_i+g_i(t)$. 
\end{remark}

\begin{remark} 
The results in Appendix \ref{AppendixA} can be modified for Theorem \ref{theoremsgnvar}, where under the assumption of $\nabla f_i(x_i,t)=\sigma x_i+g_i(t)$, it is easy to show that Assumption \ref{as-bound-double} holds, if $\norm{g_i(t) -g_j(t)}_2, \ \norm{\dot{g}_i(t) -\dot{g}_j(t)}_2$ and $\norm{\ddot{g}_i(t) -\ddot{g}_j(t)}_2, \forall t$ and $\forall i,j \in {\mathcal I},$ are bounded.
\end{remark}

\begin{remark} \label{directgraph}
The algorithms introduced Subsections \ref{subsecsingledis1}, \ref{secsingledis2}, \ref{subsecdbldis1}, \ref{subsecdblsgntimevar} and \ref{subsecdblsgninv} are still valid in the case of strongly connected weight balanced directed graph ${\mathcal G}$. In our proofs, $L$ can be replaced with symmetric matrix $L+L^T$ as $x^T L x=\frac{1}{2} x^T (L+L^T) x$. Since ${\mathcal G}$ is strongly connected weight balanced, $L+L^T$ is positive semidefinite with a simple zero eigenvalue. Note that applying the introduced algorithms for directed graphs, we need to redefine $\lambda_2$ as the smallest nonzero eigenvalue of $L+L^T$.
\end{remark}

\section{Distributed Time-Varying Convex Optimization with Swarm tracking behavior} \label{sec:swarm}
In this section, we introduce two distributed optimization algorithms with swarm tracking behavior, where the center of the agents tracks the optimal trajectory defined by \eqref{costdis} for single-integrator and double-integrator dynamics while the agents avoid inter-agent collisions and maintain connectivity.

\subsection{Distributed Convex Optimization with Swarm Tracking behavior for Single-Integrator Dynamics} \label{subsecswarmsingle}
In this subsection, we focus on the distributed optimization problem with swarm tracking behavior for single-integrator dynamics \eqref{single}. To solve this problem, we propose the algorithm 
\begin{equation} \label{usingleswarm}
\begin{split}
u_i(t) =& - \beta \text{sgn}( \sum_{j \in N_i(t)} \frac{\partial V_{ij}}{\partial x_i})+\phi_i, 
\end{split}
\end{equation}
where $V_{ij}$ is a potential function between agents $i$ and $j$ to be designed, $\beta$ is positive, and $\phi_i$ is defined in \eqref{usingledis1}. In \eqref{usingleswarm}, each agent uses its own position and the relative positions between itself and its neighbors. It is worth mentioning that in this subsection, we assume each agent has a communication/sensing radius $R$, where if $\norm{x_i-x_j}_2<R$ agent $i$ and $j$ become neighbors. Our proposed algorithm guarantees connectivity maintenance in the sense that if the graph ${\mathcal G}(0)$ is connected, then for all $t, {\mathcal G}(t)$ will remain connected. Before our main result in this subsection, we need to define the potential function $V_{ij}$.

\begin{definition} \label{defswarm} \cite{caoren12}
The potential function $V_{ij}$ is a differentiable nonnegative function of $\norm{x_i-x_j}_2$, which satisfies the following conditions
\begin{itemize}
\item[1)] $V_{ij}=V_{ji}$ has a unique minimum in $\norm{x_i-x_j}_2=d_{ij}$, where $d_{ij}$ is the desired distance between agents $i$ and $j$ and $R> \max_{i,j} d_{ij}$.
\item[2)] $V_{ij} \rightarrow \infty$ if $\norm{x_i-x_j}_2 \rightarrow 0$.
\item [3)] $V_{ii}=c$, where $c$ is a constant. \vspace*{0.1cm}
\item [4)] \small $\begin{cases} \frac{\partial V_{ij}}{\partial(\norm{x_i-x_j}_2)} =0 &  \norm{x_i(0)-x_j(0)}_2\geq R,\ \norm{x_i-x_j}_2\geq R,
\\ \frac{\partial V_{ij}}{\partial(\norm{x_i-x_j}_2)} \to \infty & \norm{x_i(0)-x_j(0)}_2< R, \ \norm{x_i-x_j}_2 \to R.
\end{cases}$ \normalsize
\end{itemize}
\end{definition}\vspace*{0.1cm}

The motivation of the last condition in Definition 5.1 is to maintain the initially existing connectivity patterns. It guarantees that two agents which are neighbors at $t=0$ remain neighbors. However, if two agents are not neighbors at $t=0$, they do not need to be neighbors at $t>0$ (see \cite{caoren12}).

\begin{theorem} \label{theoremswarm}
Suppose that graph ${\mathcal G}(0)$ is connected, Assumptions \ref{as-exist} and \ref{as1each} hold and the gradient of the cost functions can be written as $\nabla f_i(x_i,t)=\sigma x_i+g_i(t), \ \forall i \in {\mathcal I}$, where $\sigma$ and $g_i(t)$ are,  respectively, a positive coefficient and a time-varying function. If $\beta > \norm{\phi_i}_1, \ \forall i \in {\mathcal I}$, for the system \eqref{single} with the algorithm \eqref{usingleswarm}, the center of the agents tracks the optimal trajectory while the agents maintain connectivity and avoid inter-agent collisions.
\end{theorem}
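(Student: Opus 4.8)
The plan is to decouple the two assertions of the theorem. The swarm properties (connectivity maintenance and collision avoidance) are governed entirely by the potential-based term $-\beta\,\text{sgn}(\sum_{j\in N_i}\frac{\partial V_{ij}}{\partial x_i})$, whereas the optimization claim (the center tracking $x^*$) is driven by the internal signals $\phi_i$. I would therefore argue in two stages: first establish the swarm behavior through a potential Lyapunov function, and then analyze the dynamics of the aggregate gradient $\sum_{i=1}^N \nabla f_i(x_i,t)$, which under the stated linear structure is directly tied to the center.

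For the swarm behavior I would take $W=\frac{1}{2}\sum_{i=1}^N\sum_{j\in N_i}V_{ij}$. Writing $p_i\triangleq\sum_{j\in N_i}\frac{\partial V_{ij}}{\partial x_i}$ and using the symmetry $\frac{\partial V_{ij}}{\partial x_i}=-\frac{\partial V_{ij}}{\partial x_j}$ (as $V_{ij}$ depends only on $\norm{x_i-x_j}_2$), the double sum collapses so that $\dot{W}=\sum_{i=1}^N p_i^T\dot{x}_i=\sum_{i=1}^N p_i^T(-\beta\,\text{sgn}(p_i)+\phi_i)=\sum_{i=1}^N(-\beta\norm{p_i}_1+p_i^T\phi_i)$. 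Bounding $p_i^T\phi_i\le\norm{p_i}_1\norm{\phi_i}_\infty\le\norm{p_i}_1\norm{\phi_i}_1$ and invoking the hypothesis $\beta>\norm{\phi_i}_1$ yields $\dot{W}\le-\sum_{i=1}^N(\beta-\norm{\phi_i}_1)\norm{p_i}_1\le0$. Hence $W$ is nonincreasing and bounded by the finite value $W(0)$ (finite since the initial configuration has no collisions and every initially present edge has length in $(0,R)$). By condition 2) of Definition \ref{defswarm} this precludes $\norm{x_i-x_j}_2\to0$, giving collision avoidance, and by condition 4) it precludes $\norm{x_i-x_j}_2\to R$ for any pair that is a neighbor at $t=0$, so every initial edge is preserved and the connectivity of $\mathcal{G}(t)$ is inherited from $\mathcal{G}(0)$. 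Integrating the bound gives $\sum_i\norm{p_i}_1\in\mathcal{L}_1$; since the closed-loop right-hand side is bounded (so each $p_i$ is uniformly continuous), Barbalat's Lemma \cite{sheida22j} then gives $p_i\to0$.

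For the center tracking I would exploit the linear gradient structure $\nabla f_i(x_i,t)=\sigma x_i+g_i(t)$, which forces $H_i=\sigma I_m$ (identical, constant Hessians) and, by Lemma \ref{lem-intro-gradzero} and Assumption \ref{as-exist}, $x^*=-\frac{1}{N\sigma}\sum_{j=1}^N g_j(t)$. With $\bar{x}=\frac{1}{N}\sum_i x_i$ and $q\triangleq\sum_{i=1}^N\nabla f_i(x_i,t)=N\sigma(\bar{x}-x^*)$, substituting $\phi_i=-x_i-\frac{1}{\sigma}(g_i+\dot{g}_i)$ into the closed loop and differentiating $q$ gives, after the $g_i$ and $\dot g_i$ terms cancel, $\dot{q}=-q-\sigma\beta\sum_{i=1}^N\text{sgn}(p_i)$. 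Thus $\bar{x}\to x^*$ is equivalent to $q\to0$, and the damping $-q$ would drive $q$ to zero provided the aggregate signum forcing $\sum_i\text{sgn}(p_i)$ does not persistently excite it.

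This last point is the step I expect to be the main obstacle. Although the first stage yields $p_i\to0$, the quantity $\text{sgn}(p_i)$ need not vanish, so a naive estimate only gives ultimate boundedness of $q$ rather than $q\to0$. I anticipate the crux to be a sliding-mode/Filippov argument: once the formation is reached the agents translate rigidly (a common $\dot{x}_i$), the discontinuous term must be read through its equivalent value on the invariant set $\{p_i=0,\ \forall i\}$, and one must show that the $-x_i$ damping embedded in $\phi_i$ makes that equivalent forcing compatible only with $\dot{q}=-q$, so that $q\to0$ and, through $q=N\sigma(\bar{x}-x^*)$, the center tracks the optimal trajectory. Showing that the swarm-coordination signum terms cannot obstruct the vanishing of the aggregate gradient is the delicate part; by contrast, connectivity and collision avoidance follow immediately from the monotonicity of $W$.
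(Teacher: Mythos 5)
Your first stage (connectivity and collision avoidance) is essentially the paper's argument: the same potential Lyapunov function, the same collapse of the double sum via $\frac{\partial V_{ij}}{\partial x_i}=-\frac{\partial V_{ij}}{\partial x_j}$, and the same bound $\dot W\le\sum_i\norm{p_i}_1(\norm{\phi_i}_1-\beta)\le 0$, so that part stands. The gap is in the second stage: you never actually prove that the center tracks $x^*$. Your reduction to $\dot q=-q-\sigma\beta\sum_{i}\mathrm{sgn}(p_i)$ with $q=N\sigma(\bar x-x^*)$ is correct, and you are right that $p_i\to 0$ does not make $\mathrm{sgn}(p_i)$ vanish; but you then stop at ``I anticipate the crux to be a sliding-mode/Filippov argument'' without supplying one. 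As written, your estimate only yields an ultimate bound $\limsup\norm{q}\le\sigma\beta N\sqrt{m}$, which is not the statement of the theorem. A proof attempt that identifies the hard step but does not carry it out is incomplete.

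For comparison, the paper closes this step by a structural cancellation rather than an asymptotic one: it invokes the antisymmetry \eqref{rondvrelation} to assert that summing the closed loop over $i$ eliminates the discontinuous terms, giving $\sum_{j}\dot x_j=\sum_j\phi_j$ exactly for all $t$, and then reruns the Lyapunov argument of Theorem \ref{theorem1} with $W=\frac12(\sum_j\nabla f_j)^T(\sum_j\nabla f_j)$, whose derivative along $\sum_j\dot x_j=\sum_j\phi_j$ is $-\norm{\sum_j\nabla f_j}_2^2$, so $q\to 0$ with no sliding-mode analysis and independently of whether consensus or formation is reached. In your notation the paper's claim is precisely $\sum_i\mathrm{sgn}(p_i)=\mathbf{0}_m$ for all $t$ --- the one identity your derivation is missing. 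Note that this identity is immediate when the signum sits inside the neighbor sum and acts on antisymmetric pairwise quantities (as in the double-integrator algorithm \eqref{uswarm}, where $\mathrm{sgn}(v_i-v_j)=-\mathrm{sgn}(v_j-v_i)$), whereas in \eqref{usingleswarm} the signum is applied to the aggregated $p_i$ and $\sum_i p_i=0$ does not by itself force $\sum_i\mathrm{sgn}(p_i)=0$; so your hesitation points at a step the paper treats as routine. Either way, to match the theorem you must justify the vanishing (in the appropriate Filippov sense) of the aggregate discontinuous forcing in $\dot q$, or find a substitute; your proposal leaves that open.
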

\begin{proof}
Define the positive semidefinite Lyapunov function candidate
 \begin{equation} \label{Lyapswarm}
W= \frac{1}{2}\sum_{i=1}^N \sum_{j=1}^N V_{ij}.
\end{equation}
The time derivative of $W$ is obtained as $\dot{W}= \frac{1}{2}\sum_{i=1}^N \sum_{j=1}^N \big(\frac{\partial V_{ij}}{\partial x_i} \dot{x}_i + \frac{\partial V_{ij}}{\partial x_j} \dot{x}_j \big)= \sum_{i=1}^N \sum_{j=1}^N \frac{\partial V_{ij}}{\partial x_i} \dot{x}_i$,
where in the second equality, Lemma 3.1 in \cite{caoren12} has been used. Now, rewriting $\dot{W}$ along the closed-loop system \eqref{usingleswarm} and \eqref{single}, we have 
\begin{equation*} \small 
\begin{split}
&\dot{W}= - \beta \sum_{i=1}^N \sum_{j=1}^N \frac{\partial V_{ij}}{\partial x_i} \text{sgn}( \sum_{j=1}^N \frac{\partial V_{ij}}{\partial x_i}) \\
&+\sum_{i=1}^N  \sum_{j=1}^N \frac{\partial V_{ij}}{\partial x_i} \phi_i \leq  \sum_{i=1}^N \bigg(\norm{ \sum_{j=1}^N \frac{\partial V_{ij}}{\partial x_i}}_1 [\norm{\phi_i}_1 -\beta] \bigg).
\end{split}
\end{equation*}\normalsize
It is easy to see that if $\beta> \norm{\phi_i}_1, \ \forall i \in {\mathcal I},$ then $\dot{W}$ is negative semidefinite. Therefore, having $W\geq 0$ and $\dot{W} \leq 0$, we can conclude that $V_{ij} \in \mathcal{L}_{\infty}$. Since $V_{ij}$ is bounded, based on Definition \ref{defswarm}, it is guaranteed that there will be no inter-agent collision and the connectivity is maintained.

In what follows, we focus on finding the relation between the optimal trajectory and the agents' positions. Based on Definition \ref{defswarm}, we can obtain that
\begin{equation} \label{rondvrelation}\frac{\partial V_{ij}}{\partial x_i}=\frac{\partial V_{ji}}{\partial x_i}=-\frac{\partial V_{ij}}{\partial x_j}. \end{equation}
Now, by summing both sides of the closed-loop system \eqref{single} with the control algorithm \eqref{usingleswarm}, for $i \in \mathcal{I}$ we have $\sum_{j=1}^N \dot{x}_j = \sum_{j=1}^N \phi_j$. We also know that the agents have identical Hessians since it is assumed that $\nabla f_i(x_i,t)=\sigma x_i+g_i(t)$.
Now, similar to the proof of Theorem \ref{theorem1}, regardless of whether consensus is reached or not, we can show that $\sum_{j=1}^N \nabla f_j(x_j,t)$ will converge to zero asymptomatically. Hence we have $\sum_{j=1}^N x_i \to \frac{-\sum_{j=1}^N g_j}{ \sigma}$. On the other hand, using Lemma \ref{lem-intro-gradzero} and under Assumption \ref{as-exist}, we know $\sum_{j=1}^N \nabla f_j(x^*,t)=0$. Hence, the optimal trajectory is $x^*=\frac{-\sum_{j=1}^N g_j}{N \sigma}$. This implies that $\frac{1}{N}\sum_{j=1}^N x_i \to x^*$, where we have shown that the center of the agents will track the team cost function minimizer.
\end{proof}

\begin{remark}
In Appendix \ref{AppendixbB}, it is shown that a constant $\beta$ can be selected such that $\beta > \norm{\phi_i}_1,  \forall t$ and $\forall i \in \mathcal{I}$, if $\norm{g_i(t)}_2$ and $\norm{\dot{g}_i(t)}_2, \forall t$ and $\forall i \in \mathcal{I}$, are bounded.
In particular, it is shown that such a constant $\beta$ can be determined at time $t=0$ by using the agents' initial states and the upper bounds on $\norm{g_i(t)}_2$ and $\norm{\dot{g}_i(t)}_2, \forall t$ and $\forall i \in \mathcal{I}$.
\end{remark}

\subsection{Distributed Convex Optimization with Swarm Tracking Behavior for Double-Integrator Dynamics} \label{subsecswarm}
In this subsection, we focus on distributed time-varying optimization with swarm tracking behavior for double-integrator dynamics \eqref{double}. We will propose an algorithm, where each agent has access to only its own position and the relative positions and velocities between itself and its neighbors. We propose the algorithm 
\begin{equation} \label{uswarm}
\begin{split}
u_i(t) =& - \sum_{j \in N_i(t)} \frac{\partial V_{ij}}{\partial x_i} -\beta  \sum_{j \in N_i(t)} \text{sgn}(v_i -v_j)+\phi_i, 
\end{split}
\end{equation}
where $V_{ij}$ is defined in Definition \ref{defswarm}, $\beta$ is a positive coefficient, and $\phi_i$ is defined in \eqref{udbldis1}.

\begin{theorem} \label{theoremswarm-double}
Suppose that the graph ${\mathcal G}(0)$ is connected, Assumptions \ref{as-exist} and \ref{as1each} hold and the gradient of the cost functions can be written as $\nabla f_i(x_i,t)=\sigma x_i+g_i(t), \ \forall i \in {\mathcal I}$. If $\beta > \frac{ \norm{(\Pi \otimes I_m)\Phi}_2}{\sqrt{\lambda_{2}[L(t)]}}$ holds, for the system (\ref{double}) with the algorithm \eqref{uswarm}, the center of the agents tracks the optimal trajectory, the agents' velocities track the optimal velocity, and the agents maintain connectivity and avoid inter-agent collisions.
\end{theorem}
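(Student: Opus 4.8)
The plan is to split the argument into a swarm/consensus stage and an optimization stage, mirroring the structure of Theorem \ref{theorem1} and Theorem \ref{theoremswarm} but now carrying the extra velocity dynamics. First I would establish connectivity maintenance, collision avoidance, and velocity consensus through a single Lyapunov function combining the aggregate potential energy with the velocity consensus error $e_V=(\Pi\otimes I_m)V$,
\[
W=\frac{1}{2}\sum_{i=1}^N\sum_{j=1}^N V_{ij}+\frac{1}{2}e_V^T e_V .
\]
Differentiating along \eqref{double}--\eqref{uswarm} and using Lemma 3.1 in \cite{caoren12} together with the antisymmetry $\frac{\partial V_{ij}}{\partial x_i}=-\frac{\partial V_{ij}}{\partial x_j}$, the two potential-gradient contributions (one from $\frac{d}{dt}\sum_{i,j}V_{ij}$, one from the $-\sum_j\frac{\partial V_{ij}}{\partial x_i}$ term in $u_i$) cancel, while the $\frac{1}{N}\mathbf{1}\mathbf{1}^T$ part of $\Pi$ is annihilated by the antisymmetry of both the potential gradient and the signum. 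What survives is
\[
\dot W=-\frac{\beta}{2}\sum_{i=1}^N\sum_{j\in N_i}\norm{v_i-v_j}_1+e_V^T(\Pi\otimes I_m)\Phi .
\]
Since $v_i-v_j=e_{V_i}-e_{V_j}$, one has $\sum_i\sum_{j\in N_i}\norm{v_i-v_j}_1=2\norm{(D^T\otimes I_m)e_V}_1\ge 2\sqrt{e_V^T(L\otimes I_m)e_V}\ge 2\sqrt{\lambda_2[L]}\norm{e_V}_2$ by $\norm{\cdot}_1\ge\norm{\cdot}_2$, $L=DD^T$, and Lemma \ref{eigvalue}; applying Cauchy--Schwarz to the last term gives $\dot W\le \norm{e_V}_2\big(\norm{(\Pi\otimes I_m)\Phi}_2-\beta\sqrt{\lambda_2[L]}\big)\le 0$ exactly under the hypothesis $\beta>\norm{(\Pi\otimes I_m)\Phi}_2/\sqrt{\lambda_2[L(t)]}$. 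Boundedness of $W$ then keeps every $V_{ij}$ bounded, which by Definition \ref{defswarm} excludes collisions and edge loss, and a Barbalat argument as in Proposition \ref{propdbldis1} yields $e_V\to 0$, i.e. velocity consensus.

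Next I would treat the optimization objective exactly as in Theorem \ref{theorem1}. Summing \eqref{uswarm} over $i$, the potential and signum terms vanish by antisymmetry, leaving $\sum_i\dot v_i=\sum_i\phi_i$; since $\nabla f_i=\sigma x_i+g_i(t)$ forces the identical Hessians $H_i=\sigma I_m$, the auxiliary function $\frac{1}{2}\norm{\sum_j\nabla f_j}^2+\frac{1}{2}\norm{\sum_j v_j-\sum_j S_j}^2$, with $S_j=H_j^{-1}(\partial_t\nabla f_j+\nabla f_j)$, has derivative $-\norm{\sum_j\nabla f_j}^2$, so $\sum_j\nabla f_j(x_j,t)\to 0$. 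With the linear gradient this reads $\sigma N\bar x+\sum_j g_j\to 0$, and Lemma \ref{lem-intro-gradzero} together with Assumption \ref{as-exist} gives $x^*=-\sum_j g_j/(N\sigma)$, so the center $\bar x=\frac1N\sum_j x_j$ tracks $x^*$. For the velocity claim I would differentiate the vanishing quantity: $\frac{d}{dt}\sum_j\nabla f_j=\sigma\sum_j v_j+\sum_j\dot g_j$ is uniformly continuous under the standing boundedness conditions recorded in Remark \ref{Remarkbounddouble}, so a second Barbalat application forces it to zero, giving $\bar v\to v^*=-\sum_j\dot g_j/(N\sigma)$; combining with $e_{V_i}\to 0$ yields $v_i\to v^*$ for every $i$, which is consistent with a cohesive swarm whose members share a common velocity while occupying a spatial formation about the center.

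The main obstacle is the apparent circularity between connectivity and the gain condition: the bound $\dot W\le 0$, and hence the boundedness that guarantees connectivity, is only available while the graph is connected, so that $\lambda_2[L(t)]>0$ and $\norm{(\Pi\otimes I_m)\Phi}_2$ stay finite, yet connectivity is itself part of the conclusion. I would resolve this by a standard invariance/bootstrapping argument: ${\mathcal G}(0)$ is connected, and an edge $(i,j)$ present at $t=0$ can be lost only if $\norm{x_i-x_j}_2\to R$, which by the last property in Definition \ref{defswarm} drives $V_{ij}\to\infty$ and hence $W\to\infty$; but $\dot W\le 0$ holds on any interval of connectivity, so $W$ remains bounded and no edge is ever lost, making the hypothesis on $\beta$ self-consistent for all $t$ (recall $\lambda_2[L(t)]$ is uniformly bounded below while ${\mathcal G}(t)$ stays connected, since only finitely many graphs occur). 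Throughout, the discontinuity of the signum term is handled in the Filippov sense as in Remark \ref{fillipov}, so that the continuously differentiable $W$ admits a singleton set-valued Lie derivative and the computations above remain valid.
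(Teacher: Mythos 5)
Your proof is correct and follows the paper's overall architecture: the same composite Lyapunov function (aggregate potential plus $\frac12 e_V^Te_V$, up to a harmless scaling of the potential term), Lemma 3.1 of \cite{caoren12} and the antisymmetry \eqref{rondvrelation} to kill the potential-gradient contributions, boundedness of $V_{ij}$ for connectivity and collision avoidance, Barbalat for $e_V\to 0$, and the column-sum identity $\sum_i\dot v_i=\sum_i\phi_i$ feeding the Theorem~\ref{theorem1}-style argument for $\sum_j\nabla f_j\to 0$. The one substantive divergence is in how you obtain $e_V\to 0$: the paper's error dynamics \eqref{errorsysswarm} and its integration step $\dot W\le-\alpha e_V^T(L(t)\otimes I_m)e_V$ rely on a damping term $-\alpha\sum_{j\in N_i}(v_i-v_j)$ that does not actually appear in the stated controller \eqref{uswarm}, whereas you extract the decay $\dot W\le\norm{e_V}_2\bigl(\norm{(\Pi\otimes I_m)\Phi}_2-\beta\sqrt{\lambda_2[L(t)]}\bigr)$ purely from the signum term's margin over the disturbance. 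Your route is the one consistent with the algorithm as written (though, to integrate and invoke Barbalat, you should note that the margin $\inf_t\bigl(\beta\sqrt{\lambda_2[L(t)]}-\norm{(\Pi\otimes I_m)\Phi}_2\bigr)$ is uniformly positive, which follows from the finitely many connected graphs and the boundedness of $\Phi$ discussed in Remark~\ref{remarkswarmdouble}). Your two additions --- the second Barbalat application to justify $\bar v\to v^*$ rather than merely asserting it, and the explicit bootstrapping that resolves the apparent circularity between connectivity and the gain condition --- are both sound and fill in steps the paper leaves implicit.
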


\begin{proof}
Writing the closed-loop system \eqref{double} with the control algorithm \eqref{uswarm} based on the consensus errors $e_X$ and $e_V$ defined in \eqref{exev}, we have
\begin{equation} \label{errorsysswarm}
\begin{cases} \dot{e}_X =& e_V \\ 
\dot{e}_V =&-\alpha(L(t)\otimes I_m) e_V-\beta (D(t) \otimes I_m) \text{sgn}\big([D^T(t)\otimes I_m]e_V\big)
\\& \begin{pmatrix}
  \sum_{j \in N_1} \frac{\partial V_{1j}}{\partial e_{X_1}} \\
    \vdots \\
   \sum_{j \in N_N} \frac{\partial V_{Nj}}{\partial e_{X_N}}
 \end{pmatrix} +(\Pi\otimes I_m) \Phi.
\end{cases} 
\end{equation}
Define the positive semidefinite Lyapunov function candidate $W= \frac{1}{N}\sum_{i=1}^N \sum_{j=1}^N V_{ij} +\frac{1}{2} e_V^T e_V$. The time derivative of $W$ along \eqref{errorsysswarm} can be obtained as $\dot{W}= \frac{1}{2}\sum_{i=1}^N \sum_{j=1}^N \big(\frac{\partial V_{ij}}{\partial e_{X_i}} e_{V_i} + \frac{\partial V_{ij}}{\partial e_{X_j}} e_{V_j} \big)+ e_V^T \dot{e}_V$.
Using Lemma 3.1 in \cite{caoren12}, $\dot{W}$ can be rewritten as 
\begin{equation*} \small 
\begin{split}
\dot{W}=&  -\alpha e_V^T (L(t)\otimes I_m) e_V -\beta e_V^T (D(t) \otimes I_m) \text{sgn}\big([D^T(t)\otimes I_m]e_V\big)\\&+ e_V^T (\Pi\otimes I_m) \Phi.
\end{split}
\end{equation*}\normalsize
Using a similar argument to that in \eqref{local-lambda2}, we obtain that if $\beta \sqrt{\lambda_{2}[L(t)]} > \norm{(\Pi \otimes I_m)\Phi}_2,$ then $\dot{W}$ is negative semidefinite. Therefore, having $W\geq 0$ and $\dot{W} \leq 0$, we can conclude that $V_{ij}, e_v \in \mathcal{L}_{\infty}$. By integrating both sides of $\dot{W} \leq -\alpha e_V^T (L(t)\otimes I_m) e_V$, we can see that  $e_v \in \mathcal{L}_{2}$. Now, applying Barbalat's Lemma \cite{sheida22j}, we obtain that $e_V$ converges to zero asymptotically, which means that the agents' velocities reach consensus as $t\rightarrow \infty$. Since $V_{ij}$ is bounded, it is guaranteed that there will be no inter-agent collision and the connectivity is maintained. 

In the next step, using \eqref{rondvrelation}, by summing both sides of the closed-loop system \eqref{double} with the control algorithm \eqref{uswarm} for $i \in \mathcal{I},$ we have $\sum_{j=1}^N \dot{v}_j = \sum_{j=1}^N \phi_j$. Now, if the team cost function $\sum_{i=1}^N f_i(x,t)$ is convex and $\nabla f_i(x_i,t)=\sigma x_i+g_i(t)$, applying a procedure similar to the proof of Theorem \ref{theoremsgninv}, we can show that $\sum_{j=1}^N \nabla f_j(x_j,t)$ will converge to zero asymptomatically and \eqref{xvbound} holds. Particularly, we have shown that the average of agents' states tracks the optimal trajectory. Because the agents' velocities reach consensus as $t \to \infty$, we have that $v_i$ approaches $v^*$ as $t \to \infty$. 
\end{proof}

\begin{remark} \label{remarkswarmdouble}
The assumption $\beta > \frac{ \norm{(\Pi \otimes I_m)\Phi}_2}{\sqrt{\lambda_{2}[L(t)]}}$ can be interpreted as a bound on the difference between the agents' internal signals. Using the fact that  $\lambda_{2}[L(t)]$ is lower bounded above 0, there always exists a $\beta$ satisfying $\beta > \frac{ \norm{(\Pi \otimes I_m)\Phi}_2}{\sqrt{\lambda_{2}[L(t)]}}$ if Assumption \ref{as-bound-double} holds. Here, to satisfy Assumption \ref{as-bound-double}, with $\nabla f_i(x_i,t)=\sigma x_i+g_i(t), \ \forall i \in {\mathcal I}$, the boundlessness of $\norm{g_i(t)-g_j(t)}_2, \norm{\dot{g}_i(t)-\dot{g}_j(t)}_2$, and $\norm{\ddot{g}_i(t)-\ddot{g}_j(t)}_2, \forall t$ and $\forall i,j \in {\mathcal I},$ is sufficient.
\end{remark}
\begin{remark} \label{switching-graph}
The algorithms proposed in Subsections \ref{secsingledis2} and \ref{subsecdblestim} and Section \ref{sec:swarm} are provided for time-varying graphs. For algorithms introduced in Subsections \ref{subsecsingledis1}, \ref{subsecdbldis1}, \ref{subsecdblsgntimevar} and \ref{subsecdblsgninv}, the current results are demonstrated for static graphs. However, the results are valid for time-varying graphs if the graph ${\mathcal G}(t)$ is connected for all $t$ and a sufficiently large constant gain is used, instead of the time-varying adaptive gains. In particular, the constant gain $\beta$ should satisfy $\beta> \frac{\norm{(\Pi \otimes I_m)\Phi}_2}{\sqrt{\lambda_2[L(t)]}}$. To select such a $\beta$, we need to know $\bar{\phi}$, defined in Assumption \ref{as-bound-single} (or Assumption \ref{as-bound-double} in the case of double-integrator dynamics), and $\beta_x$ (and $\beta_v$ in case of double-integrator dynamics), defined in Appendix \ref{AppendixA}.
\end{remark}
\begin{remark}
All the proposed algorithms are also applicable to non-convex functions. However, in this case it is just guaranteed that the agents converge to a local optimal trajectory of the team cost function. 
\end{remark}
\section{Simulation and Discussion } \label{sec:sim}
In this section, we present various simulations to illustrate the theoretical results in previous sections. Consider a team of six agents. The interaction among the agents is described by an undirected ring graph. The agents' goal is to minimize the team cost function $\sum_{i=1}^6 f_i (x_i,t)$, where $x_i=({r_{x_i}},{r_{y_i}})^T$ is the coordinate of agent $i$ in $2D$ plane, subject to $x_i=x_j$. 

In our first example, we apply the algorithm \eqref{usingledis1} for single-integrator dynamics \eqref{single}. The local cost function for agent $i$ is chosen as 
\begin{equation} \label{costfunc1}
f_i(x_i,t)= (r_{x_i}- i \text{sin}(t))^2+ (r_{y_i}- i \text{cos}(t))^2,
\end{equation}
where it is easy to see that the optimal point of the team cost function creates a trajectory of a circle whose center is at the origin and radius is equal to $\frac{21}{6}$. For \eqref{costfunc1}, Assumption \ref{as1each} and the conditions for agents' cost functions in Remark \ref{Remarkboundsingle} hold. In addition they have identical Hessians and the team cost function is convex. $\beta_{ij}(0)=\beta_{ji}(0)$ are chosen randomly within (0.1,2) in the algorithm \eqref{usingledis1}. The trajectory of the agents and the optimal trajectory is shown in Fig. \ref{sgldisfig}. It can be seen that the agents reach consensus and track the optimal trajectory which minimizes the team cost function.
\begin{figure}[t]
\begin{center} \hspace*{-0.45cm}
\vspace{0.1cm} {\scalebox{0.29}{\includegraphics*{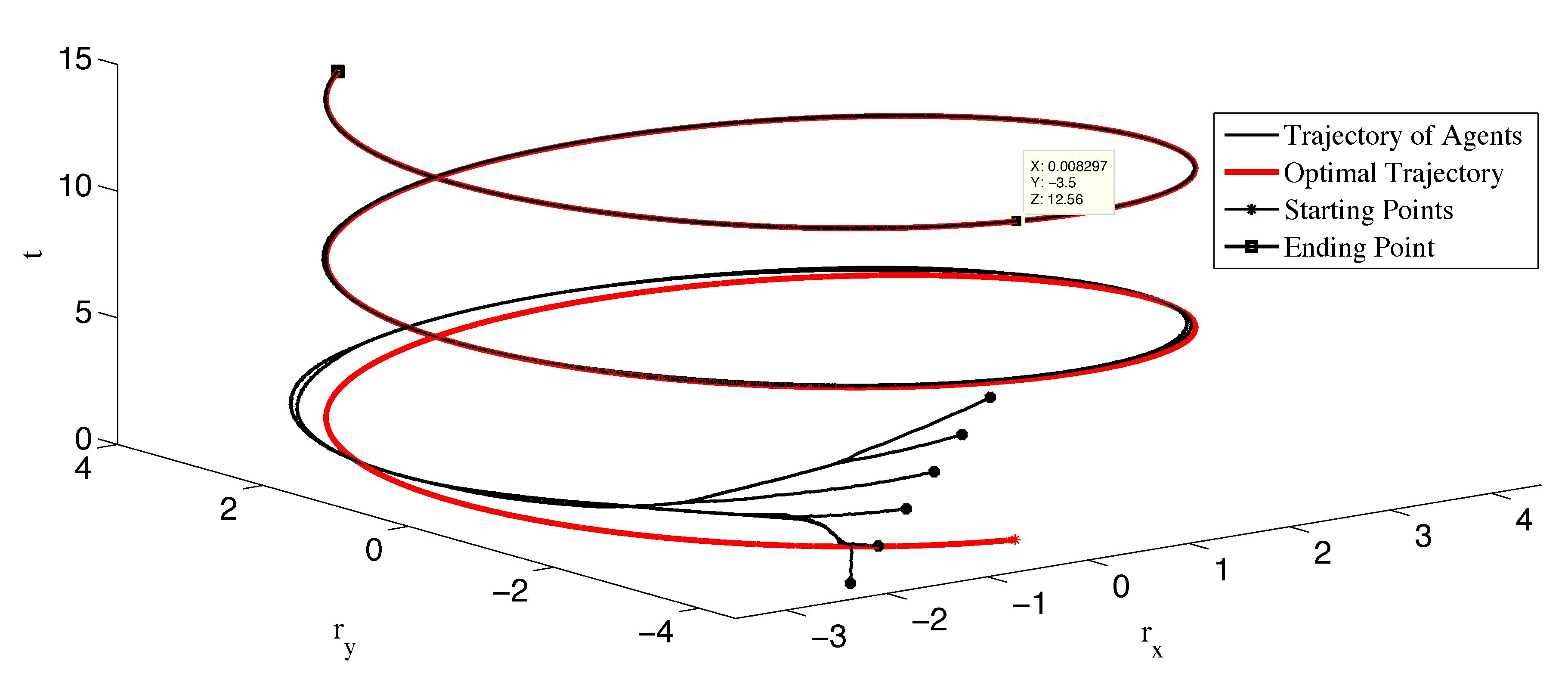}}}
\vspace{-.2cm} \caption{Trajectories of all agents along with the optimal trajectory using the algorithm \eqref{usingledis1} for the local cost functions \eqref{costfunc1}} \vspace{-0.1cm}
\label{sgldisfig}
\end{center}
\end{figure}

In the case of double-integrator dynamics, we first give an example to illustrate the algorithm \eqref{udbldis1} for \eqref{double} with the local cost functions defined by \eqref{costfunc1}. Choosing the coefficients in \eqref{udbldis1} as $\mu =5, \alpha=12, \gamma=5, \zeta=12$, and $\beta_{ij}(0)=\beta_{ji}(0)$ randomly within (0.1,2), the agents reach consensus and the team cost function is minimized as shown in Fig. \ref{dbldisfig1}.

\begin{figure}[t] 
\begin{center} \hspace*{-.4cm}
\vspace{0.1cm} {\scalebox{0.28}{\includegraphics*{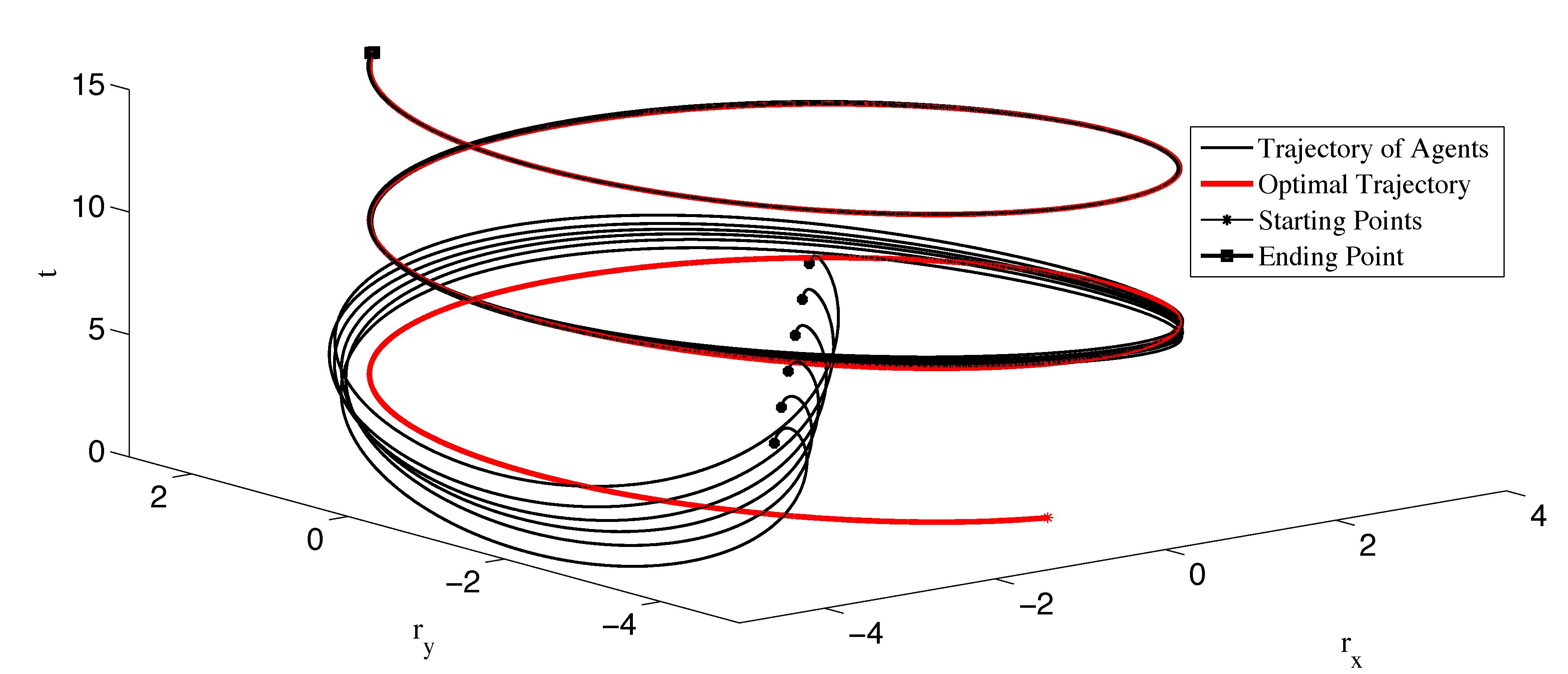}}}
\vspace{-0.3cm} \caption{Trajectories of all agents along with the optimal trajectory using the algorithm \eqref{udbldis1} for the local cost functions \eqref{costfunc1}} \vspace{-0.3cm}
\label{dbldisfig1}\end{center}
\end{figure}

In our next example, we illustrate the results obtained in Subsection \ref{subsecdblestim}, where it has been clarified that the algorithm \eqref{koldbl1} -\eqref{contestdbl} can be used for local cost functions with nonidentical Hessians. Here, the local cost functions 
\begin{equation} \label{costfunc2}
f_i(x_i,t)= (\frac{r_{x_i}}{i} - \text{sin}(t))^2+ (\frac{r_{y_i}}{i} - \text{cos}(t))^2,
\end{equation}
will be used, where $H_i(x_i,t)=\frac{2}{i^2} I_2, \ \forall i \in {\mathcal I}$. It can be obtained that the team cost function's optimal trajectory creates a circle whose center is at the origin and radius is equal to $1.64$. The algorithm \eqref{koldbl1} -\eqref{contestdbl} with $\kappa=12, \rho=2, \alpha_1=0.1 $, and $\alpha_2=\frac{0.2}{1.1}$ is used for the system \eqref{double}. Fig. \ref{dblestimfig} shows that the team cost function is minimized.

 \begin{figure}[t] 
\begin{center} \hspace*{-.5cm}
\vspace{0.1cm} {\scalebox{0.28}{\includegraphics*{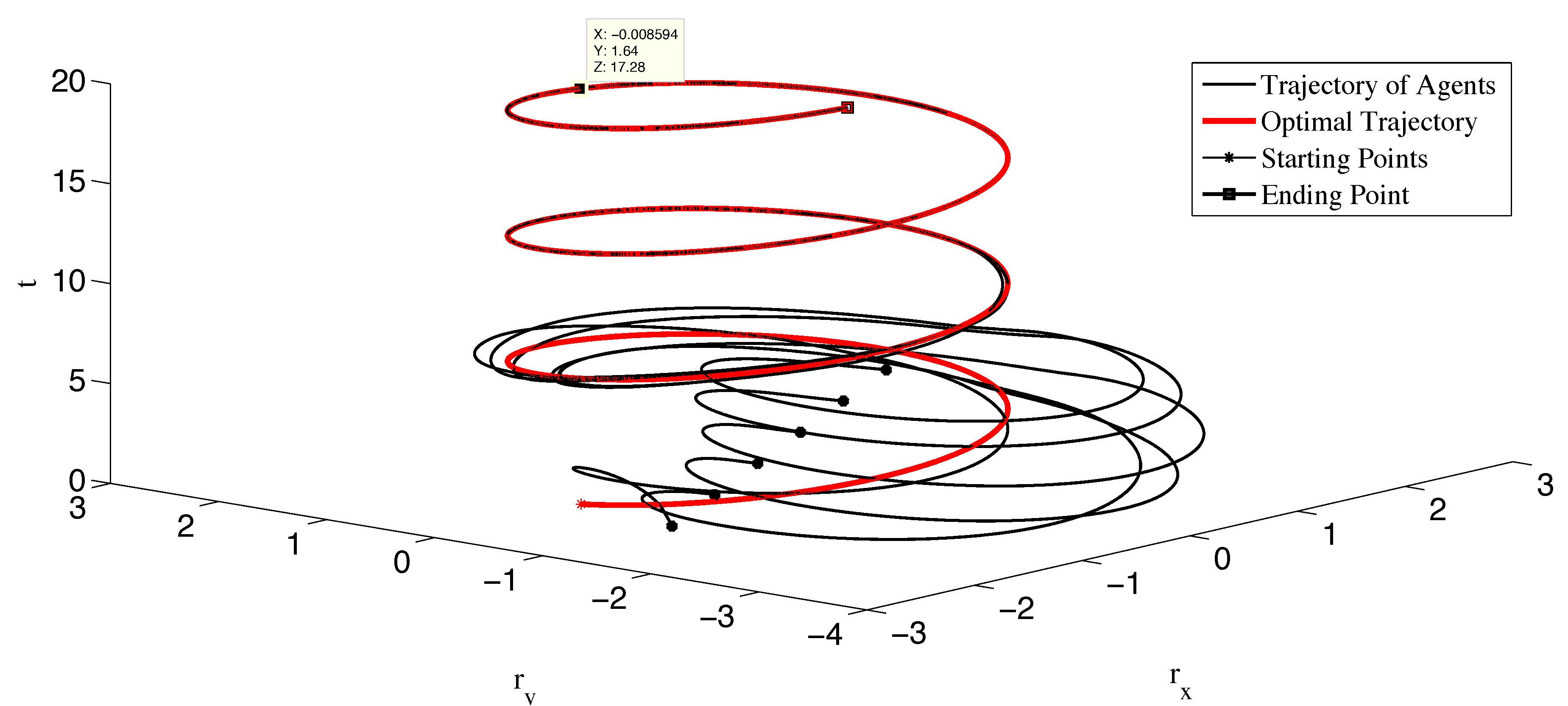}}}
\vspace{-0.2cm} \caption{Trajectories of all agents along with the optimal trajectory using the algorithm \eqref{koldbl1} -\eqref{contestdbl} for the local cost functions \eqref{costfunc2}} \vspace{-0.3cm}
\label{dblestimfig}\end{center}
\end{figure}

In our next example, the results in Subsection \ref{subsecdblsgninv} is illustrated, where the invariant approximation of the signum function is employed. Here, the algorithm \eqref{udbldissgnvary} with $h(\cdot)$ given by \eqref{hinv} is used to minimize the agents' team cost function for local cost functions defined as \eqref{costfunc1}. The coefficients are chosen as $\mu=5, \alpha=10, \gamma=5, \zeta=5, \epsilon=2$ and $\beta_{ij}(0)=\beta_{ji}(0)$ randomly within (0.1,2). Fig. \ref{dblsgninvfig} shows the agents' trajectories along with the optimal one. It is shown that the agents track the optimal trajectory with a bounded error.

\begin{figure}[t] 
\begin{center} \hspace*{-.3cm}
\vspace{0.1cm} {\scalebox{0.28}{\includegraphics*{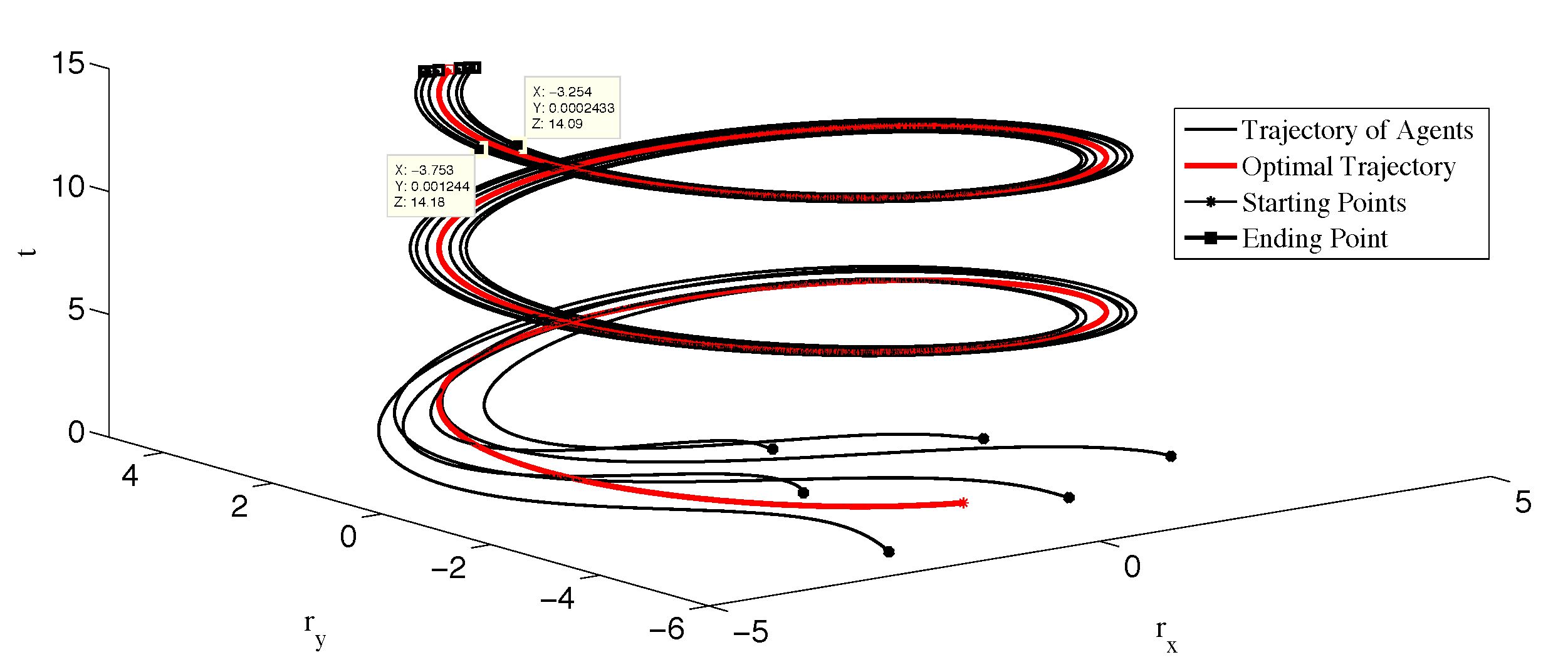}}}
\vspace{-.25cm} \caption{Trajectories of all agents using the algorithm \eqref{udbldissgnvary} and \eqref{hinv} for the local cost functions \eqref{costfunc1}} \vspace{-0.3cm}
\label{dblsgninvfig}\end{center}
\end{figure}

In our last illustration, the swarm tracking control algorithm \eqref{uswarm} is employed, where the local cost functions are defined as
\begin{equation} \label{costfunc3} \small
f_i(x_i,t)= (r_{x_i} +2i \frac{\text{sin}(0.5t)}{t+1})^2+ (r_{y_i} +i\text{sin}(0.1t))^2.
\end{equation} \normalsize
In this case, we let $R=5$. The parameter of \eqref{uswarm} is chosen as $\beta=20$. To guarantee the collision avoidance and connectivity maintenance, the potential function partial derivative is chosen as Eqs. (36) and (37) in  \cite{caoren12}, where $d_{ij}=0.5, \ \forall i,j$. Fig. \ref{dblswarmfig} shows that the center of the agents' positions tracks the optimal trajectory while the agents remain connected and avoid collisions.

\begin{figure}[t] 
\begin{center} \hspace*{-.4cm}
\vspace{0.1cm} {\scalebox{0.29}{\includegraphics*{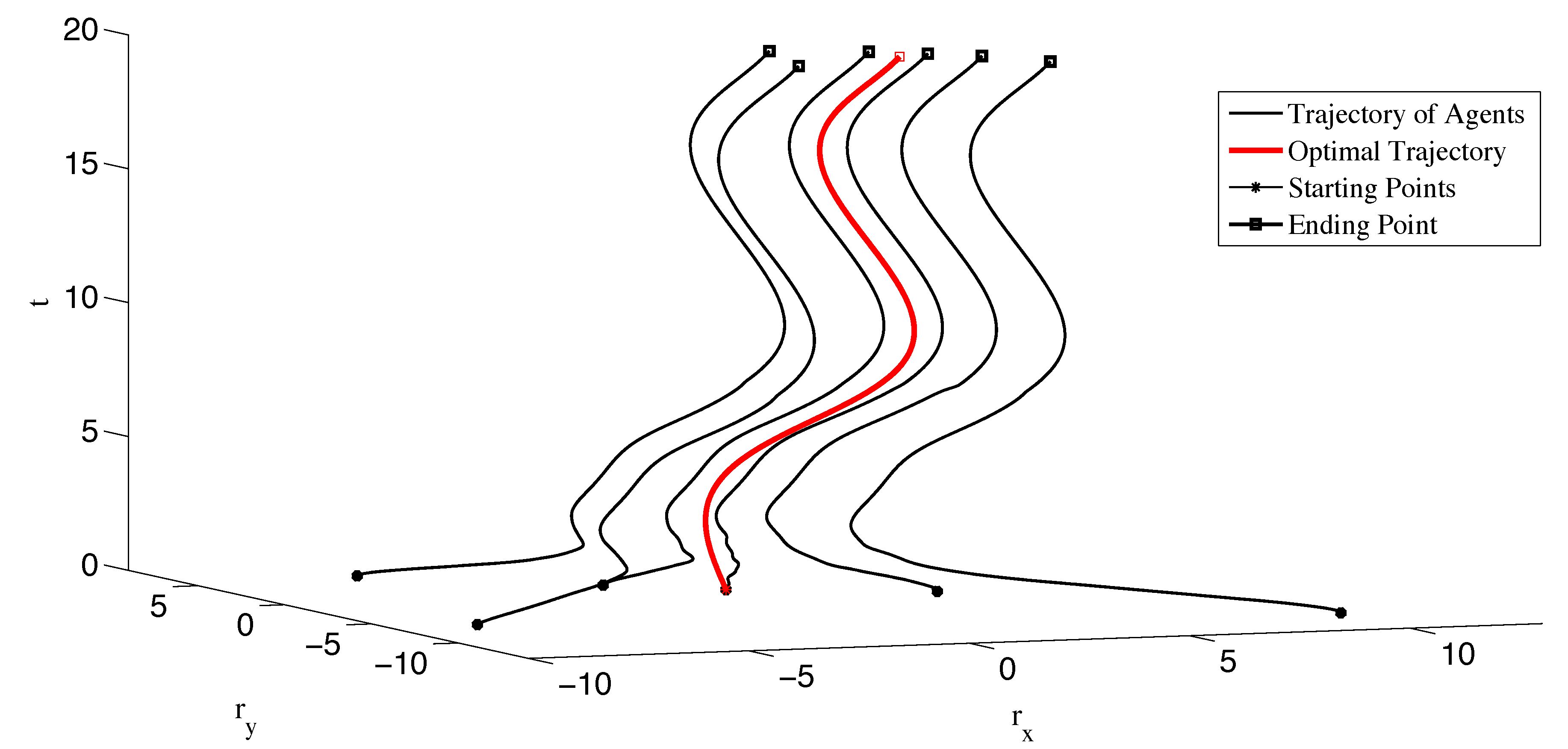}}}
\vspace{-0.28cm} \caption{Trajectories of all agents using the algorithm \eqref{uswarm} for local cost function \eqref{costfunc3}} \vspace{-0.3cm}
\label{dblswarmfig}\end{center}
\end{figure}


\section{Conclusions} \label{sec:conclusions}
In this paper, a time-varying distributed convex optimization problem was studied for continuous-time multi-agent systems, where the objective was to minimize the sum of the local time-varying cost functions.
Each local cost function was only known to an individual agent. Control algorithms have been designed for the cases of single-integrator and double-integrator dynamics. In both cases, as a first step, a centralized approach has been introduced to solve the optimization problem for convex time-varying cost functions. Then this problem has been solved in a distributed manner and a discontinuous algorithm with adaption gains has been proposed, where it was possible to rely on only local sensing.
To relax the restricted assumption imposed on the feasible cost functions, an estimator based algorithm has been proposed, where the agents used dynamic average tracking as a tool to estimate the centralized control input. However, the necessity of communication between neighbors was the drawback of the estimator based algorithm. Then in the case of double-integrator dynamics, we have focused on extending our proposed algorithms to improve them for real applications. Two continuous algorithms have been proposed which employed continuous approximations of the signum function. The first continuous algorithm used a time-varying approximation of the signum function, where we have shown that the team cost function was minimized and the agents reached consensus. In the second continuous algorithm, a time-invariant approximation of the signum function has been used which was easier to implement. The trade-off was that there existed a bounded error between the agents and the optimal trajectory. To add the inter-agent collision avoidance capability into our algorithms, two distributed convex optimization algorithms with swarm tracking behavior have been proposed for single-integrator and double-integrator dynamics. It has been shown that for both cases, the connectivity of the agents was maintained while the agents avoided inter-agent collisions and the center of the agents tracked the optimal trajectory.




\bibliographystyle{IEEEtran}
\bibliography{IEEEabrv,IEEE_TSG_00018_2010}

\begin{appendices}

\section{} \label{AppendixA}

In this appendix an explanation is given on how Assumptions  \ref{as-bound-single} and \ref{as-bound-double} can be, respectively, satisfied in Theorem  \ref{theoremsingledis1} and \ref{theorem1}. We focus on the more involved case in Theorem \ref{theorem1} while a similar argument holds in the case in Theorem \ref{theoremsingledis1}. Here we show that Assumption \ref{as-bound-double} holds if the cost functions with identical Hessians satisfy certain conditions, referred as Condition ($\star$) for convenience, such that the boundedness of $\norm{x_i(t)-x_j(t)}_2$ and $\norm{v_i(t)-v_j(t)}_2$ for all $t$ guarantees the boundedness of  $\norm{\nabla f_j(x_j,t) - \nabla f_i(x_i,t) }_2$, $\norm{\frac{d }{dt}\nabla  f_j(x_j,t)-\frac{d }{dt}\nabla  f_i(x_i,t)}_2$ and $\norm{\frac{\partial^2  }{\partial t^2}\nabla  f_j(x_j,t)-\frac{\partial^2 }{\partial t^2}\nabla  f_i(x_i,t)}_2$ for all $t$. As a result, if Condition ($\star$) is satisfied, then Assumption 4.2 holds.

%
%
In particular, we will show that under Condition ($\star$), there always exists a finite $\bar{\phi}$ which can be determined at time $t=0$. With $\bar{\phi}$ determined, using the algorithm \eqref{udbldis1}, $\|x_i(t)-x_j(t)\|$ and $\|v_i(t)-v_j(t)\|, \forall t$ and $\forall i,j \in {\mathcal I},$ will remain bounded for all $t \geq 0$, which implies that Assumption \ref{as-bound-double} holds. We show the argument in four steps.

 \begin{enumerate}
 \item With identical Hessians, Assumption \ref{as-bound-double} holds if $\norm{\nabla f_j(x_j,t) - \nabla f_i(x_i,t) }_2$, $\norm{\frac{d }{dt}\nabla  f_j(x_j,t)-\frac{d }{dt}\nabla  f_i(x_i,t)}_2$ and $\norm{\frac{\partial^2  }{\partial t^2}\nabla  f_j(x_j,t)-\frac{\partial^2 }{\partial t^2} \nabla  f_i(x_i,t)}_2, \forall t$ and $\forall i,j \in {\mathcal I},$ are bounded. Assume that the boundedness of $\norm{x_i(t)-x_j(t)}_2$ and $\norm{v_i(t)-v_j(t)}_2, \forall t$ guarantees the boundedness of  $\norm{\nabla f_j(x_j,t) - \nabla f_i(x_i,t) }_2$, $\norm{\frac{d }{dt}\nabla  f_j(x_j,t)-\frac{d}{dt}\nabla  f_i(x_i,t)}_2$ and $\norm{\frac{\partial^2  }{\partial t^2}\nabla  f_j(x_j,t)-\frac{\partial^2  }{\partial t^2}\nabla  f_i(x_i,t)}_2, \ \forall t$ and $\forall i,j \in {\mathcal I}$. This is ensured by Condition ($\star$). Denote the upper bounds on $\norm{x_i(t)-x_j(t)}_2$ and $\norm{v_i(t)-v_j(t)}_2, \forall t$ and $\forall i,j \in {\mathcal I},$ as, respectively, $\beta_x$ and $\beta_v$.
It is easy to see that if there exist constant $\beta_x$ and $\beta_v$, there exists a constant $\bar{\phi}$, which in turn guarantees the existence of bounded $\bar{\beta}$, where $\bar{\beta}  >\frac{ (N-1)\bar{\phi}}{2}$.
%

\item Our proof will be completed if we can show that there exist constant $\beta_x$ and $\beta_v$. In the remaining, we will show that not only there exist constant $\beta_x$ and $\beta_v$, but also it is sufficient to determine these constants using the agents' initial states.
Two conservative $\beta_x$ and $\beta_v$ are selected using the initial states as
\begin{equation}\label{bxbv} \small
\begin{split}
\beta_x=&2\sqrt{\frac{m\lambda_\text{max}[P]}{N\lambda_\text{min}[P]}}\big(\max_{i} \norm{\sum_{j=1}^N(x_i(0) - x_j(0))}_\infty\\
&+\max_{i} \norm{\sum_{j=1}^N(v_i(0) -v_j(0))}_\infty\big)+\gamma\\
\beta_v=&2\sqrt{\frac{m\lambda_\text{max}[P]}{N\lambda_\text{min}[P]}}\big(\max_{i} \norm{\sum_{j=1}^N(x_i(0) - x_j(0))}_\infty\\&+\max_{i} \norm{\sum_{j=1}^N(v_i(0) -v_j(0))}_\infty\big)+\gamma, 
\end{split}
\end{equation}\normalsize where $\gamma$ is a positive constant, $\lambda_\text{min}[P]>0,$ and $\lambda_\text{max}[P]>0$ with $P$ defined in \eqref{lyapdbldis}.
Now, we will show that $\norm{x_i(t)-x_j(t)}_2 \leq \beta_x, \norm{v_i(t)-v_j(t)}_2 \leq \beta_v, \ \forall i,j \in {\mathcal I}$ and $\forall t$.

\item We know that for $W$ defined in \eqref{lyapdbldis} and for $\bar{\beta}$ selected based on the introduced constants $\beta_x$ and $\beta_v$, we have $\dot{W}_{|t=0} < 0$. 
We will use a contradiction approach to show that such $\bar{\beta}$ ensures $\dot{W} \leq 0, \forall t$. Assume that there exists a time $t=t_{\epsilon}$ at which $\dot{W}$ becomes positive, i.e, $\dot{W}_{|t=t_{\epsilon}} > 0$. 
By recalling the selected conservative $\bar{\beta}$, this is only possible if one or more of the constants $\beta_x,$ and $\beta_v$ do not exist. This means that there exist two agents $k$ and $l$ such that $\norm{x_k(t_{\epsilon})-x_l(t_{\epsilon})}_2> \beta_x$
or $\norm{v_k(t_{\epsilon})-v_l(t_{\epsilon})}_2 >\beta_v$.
Let us first suppose that $\norm{x_k(t_{\epsilon})-x_l(t_{\epsilon})}_2> \beta_x$. Note that that $\dot{W}(t) \leq 0, \ \forall t \in [0,t_{\epsilon}),$ which means that $W(t) \leq W(0), \ \forall t \in [0,t_{\epsilon})$. Using \eqref{lyapdbldis}, it is easy to see that $\forall t \in [0,t_{\epsilon})$ we have $\sqrt{\frac{\lambda_\text{max}[P]}{\lambda_\text{min}[P]}} \norm{\left( {\begin{array}{cc} e_X(0) \\ e_V(0) \\ \end{array} } \right)}_2 \geq \norm{\left( {\begin{array}{cc} e_X(t) \\ e_V(t) \\ \end{array} } \right)}_2.$
Now, using the graph connectivity and the properties of the norms, it is easy to show that
\begin{equation*}\small
\begin{split}
&\beta_x-\gamma= 2\sqrt{\frac{m\lambda_\text{max}[P]}{N\lambda_\text{min}[P]}}\big(\max_{i} \norm{\sum_{j=1}(x_i(0) - x_j(0))}_\infty\\
&+\max_{i} \norm{\sum_{j=1}(v_i(0) -v_j(0))}_\infty\big)=2 \sqrt{\frac{Nm\lambda_\text{max}[P]}{\lambda_\text{min}[P]}}(\norm{ e_X(0)}_\infty\\
&+\norm{ e_V(0)}_\infty)\geq 2 \sqrt{\frac{Nm\lambda_\text{max}[P]}{\lambda_\text{min}[P]}}\norm{\left( {\begin{array}{cc} e_X(0) \\ e_V(0) \\ \end{array} } \right)}_\infty 
\end{split}
\end{equation*} 
\begin{equation*}\small
\begin{split}
& \geq 2 \sqrt{\frac{\lambda_\text{max}[P]}{\lambda_\text{min}[P]}} \norm{\left( {\begin{array}{cc} e_X(0) \\ e_V(0) \\ \end{array} } \right)}_2 \geq 2 \norm{\left( {\begin{array}{cc} e_X(t) \\ e_V(t) \\ \end{array} } \right)}_2\\
&\geq 2 \norm{e_X(t)}_2\geq
2 \max_{i} \norm{x_i(t)-\frac{1}{N}\sum_{j=1}x_j(t)}_\infty\\&\geq \norm{x_k(t)-x_l(t)}, \forall k,l \in  {\mathcal I}, \text{and} \ \forall \ t \in [0,t_{\epsilon}).
\end{split}
\end{equation*} \normalsize
Now, under the assumption of $\norm{x_k(t_{\epsilon})-x_l(t_{\epsilon})}_2> \beta_x$ and using the selected $\beta_x$ in \eqref{bxbv}, we have $\norm{x_k(t_{\epsilon})\\-x_l(t_{\epsilon})}_2- \lim_{t \to t_{\epsilon}^-} \norm{x_k(t)-x_l(t)}_2> \gamma$. However, $\lim_{t \to t_{\epsilon}^-} \norm{x_k(t)-x_l(t)}_2 \neq \norm{x_k(t_{\epsilon})-x_l(t_{\epsilon})}_2$, contradicts with the continuity of the agents' positions as mentioned in Remark \ref{fillipov}.
Therefore, we have $\norm{x_i(t)-x_j(t)}_2 \leq \beta_x, \ \forall i,j \in {\mathcal I}$ and $\forall t$. The same argument can be made for showing $\norm{v_i(t)-v_j(t)}_2 \leq \beta_v, \ \forall i,j \in {\mathcal I}$ and $\forall t$, which is omitted here. We thus conclude that there exists no time $t=t_{\epsilon}$, where $\dot{W}_{|t=t_{\epsilon}} > 0$.
\end{enumerate}

For example, to satisfy Assumption \ref{as-bound-double} for the local cost function $f_i(x_i,t) = (a x_i + g_i(t))^2$ defined in Remark \ref{Remarkboundsingle}, it is only required to satisfy Condition ($\star$).  It is easy to see that  Condition ($\star$) boils down to the boundedness of $\norm{g_i(t)-g_j(t)}_2, \norm{\dot{g}_i(t)-\dot{g}_j(t)}_2$, and $\norm{\ddot{g}_i(t)-\ddot{g}_j(t)}_2, \forall t$ and $\forall i,j \in {\mathcal I}$. Hence the boundedness of $\norm{g_i(t)-g_j(t)}_2, \norm{\dot{g}_i(t)-\dot{g}_j(t)}_2,$ and  $\norm{\ddot{g}_i(t)-\ddot{g}_j(t)}_2, \forall t$ and $\forall i,j \in {\mathcal I},$  is sufficient to ensure that Assumption \ref{as-bound-double} holds.

A similar argument can be done for satisfying Assumption \ref{as-bound-single} in Theorem \ref{theoremsingledis1}. Here for cost functions with identical Hessians, Condition ($\star$) is such that the boundedness of $\norm{x_i(t)-x_j(t)}_2$ for all $t$ guarantees the boundedness of $\norm{\nabla f_j(x_j,t) - \nabla f_i(x_i,t)}_2$ and $\norm{\frac{\partial }{\partial t}\nabla  f_j(x_j,t)-\frac{\partial }{\partial t}\nabla  f_i(x_i,t)}_2, \forall i,j \in {\mathcal I}, $ for all $t$. As mentioned in Remark \ref{Remarkboundsingle}, for the local cost function $f_i(x_i,t) = (a x_i + g_i(t))^2$, Condition ($\star$) boils down to the boundedness of $\norm{g_i(t) -g_j(t)}_2$ and $\norm{\dot{g}_i(t) -\dot{g}_j(t)}_2, \forall t$ and $\forall i,j \in {\mathcal I}$. 
 Hence the boundedness of  $\norm{g_i(t) -g_j(t)}_2$ and $\norm{\dot{g}_i(t) -\dot{g}_j(t)}_2$ is sufficient to ensure that Assumption  \ref{as-bound-single} holds.

\section{} \label{AppendixbB}

In this appendix, we clarify how the boundedness of $\phi_i, \forall i  \in {\mathcal I}$ and $\forall t$, in Theorem \ref{theoremswarm}, can be satisfied. In particular, we show that for bounded $\norm{g_i(t)}_2$ and $\norm{\dot{g}_i(t)}_2, \forall t$ and $\forall i \in \mathcal{I}$, there exists a constant $\beta$ such that $\beta > \norm{\phi_i}_1, \ \forall i \in {\mathcal I}$ and $\forall t$. The constant $\beta$ can be determined at time $t=0$ using the agents' initial states and the upper bounds on $\norm{g_i(t)}_2$ and $\norm{\dot{g}_i(t)}_2, \forall t$ and $\forall i \in \mathcal{I}$.

Denote the upper bounds on $\norm{x_i(t)}_2, \norm{g_i(t)}_2$ and $\norm{\dot{g}_i(t)}_2, \forall t$ and $\forall i \in \mathcal{I},$ as, respectively,  $\beta_x, \bar{g}$ and $\bar{\dot{g}}$. It is easy to see that if there exist constant $\beta_x, \bar{g}$ and $\bar{\dot{g}}$, there exists a constant $\beta$, where $\beta > \norm{\phi_i}_1,  \forall i \in {\mathcal I}$ and  $\forall t$. We show the argument in three steps.

\begin{enumerate}

\item It is assumed that $\norm{g_i(t)}_2$ and $\norm{\dot{g}_i(t)}_2, \forall t$ and $\forall i \in \mathcal{I},$ are bounded. Hence, our proof will be completed if we can show that there exists a constant $\beta_x$. In the remaining, we will show that not only there exists a constant $\beta_x$, but also it is sufficient to determine this constant using the agents' initial states.
Define $\beta_x$ as
\begin{equation}\small \label{betax}
\beta_x=\frac{1}{N} \norm{\sum_{i=1}^N x_i(0)}_2+\frac{2}{\sigma}\bar{g}+(N-1)R+\gamma,
\end{equation} 
where $\gamma$ is a positive constant and $R$ is defined in Definition \ref{defswarm}.
We know that for $W$ defined in \eqref{Lyapswarm} and for $\beta$ selected based on the introduced constants $\beta_x, \bar{g}$ and $\bar{\dot{g}}$, we have $\dot{W}_{|t=0} < 0$. 
We will use a contradiction approach to show that such $\beta$ ensures $\dot{W} \leq 0, \forall t$. Assume that there exists a time $t=t_{\epsilon}$ at which $\dot{W}$ becomes positive, i.e, $\dot{W}_{|t=t_{\epsilon}} > 0$. 
By recalling the selected conservative $\beta$, this is only possible if there exists an agent $i$ such that $\norm{x_i(t_{\epsilon})}_2> \beta_x$.
Note that that $\dot{W}(t) \leq 0, \ \forall t \in [0,t_{\epsilon}),$ which means that $V_{ij}(t)$ is bounded, $\forall t \in [0,t_{\epsilon})$, which in turn implies that the agents remain connected. Hence, it is easy to see that $\forall t \in [0,t_{\epsilon})$ we have 
\begin{align} \label{bound-local-sw}
\norm{x_i(t)-x_j(t)}_2 <(N-1)R.
\end{align}

\item Using $W$ defined in \eqref{Wsingledis} and similar to Theorem \ref{theoremsingledis1}, we have $\dot{W}(t)< 0$ (no matter consensus is reached or not), which in turn implies that $\norm{\sum_{j=1}^N \nabla f_j(x_j,t)}_2$ is decreasing. Now, for $\nabla f_i(x_i,t)=\sigma x_i+g_i(t)$, defined in Theorem \ref{theoremswarm}, and using the properties of the norms, it is easy to show that
\begin{equation*}\small
\begin{split}
\sigma &\norm{\sum_{i=1}^N x_i(t)}_2-\norm{\sum_{i=1}^N g_i(t)}_2\leq\norm{\sum_{i=1}^N \nabla f_i(x_i(t),t)}_2\\
\leq &\norm{\sum_{i=1}^N \nabla f_i(x_i(0),0)}_2\leq \sigma\norm{\sum_{i=1}^N x_i(0)}_2+\norm{\sum_{i=1}^N g_i(0)}_2.
\end{split}
\end{equation*}
Using the upper bound $\bar{g}$, we have $\forall t$
\begin{equation} \label{bound-sw-x}
\norm{\sum_{i=1}^N x_i(t)}_2\leq \norm{\sum_{i=1}^N x_i(0)}_2+\frac{2N}{\sigma}\bar{g}.
\end{equation}
\item Now, using \eqref{bound-local-sw} and \eqref{bound-sw-x}, it is easy to see that 
\begin{equation}
\norm{x_i(t)}_2\leq\frac{1}{N} \norm{\sum_{i=1}^N x_i(0)}_2+\frac{2}{\sigma}\bar{g}+(N-1)R,
\end{equation}
$ \forall i \in \mathcal{I}$ and $\forall  t \in  [0,t_{\epsilon}).$ Now, under the assumption of $\norm{x_i(t_{\epsilon})}_2> \beta_x$ and using the selected $\beta_x$ in \eqref{betax}, we have $\norm{x_i(t_{\epsilon})}_2- \lim_{t \to t_{\epsilon}^-} \norm{x_i(t)}_2> \gamma$. However, $\norm{x_i(t_{\epsilon})}_2 \neq \lim_{t \to t_{\epsilon}^-} \norm{x_i(t)}_2$ contradicts with the continuity of the agents' positions as mentioned in Remark \ref{fillipov}.
Therefore, we have $\norm{x_i(t)}_2 \leq \beta_x, \ \forall i \in {\mathcal I}$ and $\forall t$. We thus conclude that there exists no time $t=t_{\epsilon}$, where $\dot{W}_{|t=t_{\epsilon}} > 0$.

\end{enumerate}

%

\end{appendices}
 
\begin{IEEEbiography}
[{\includegraphics[width=1in,height=1.25in,clip,keepaspectratio]{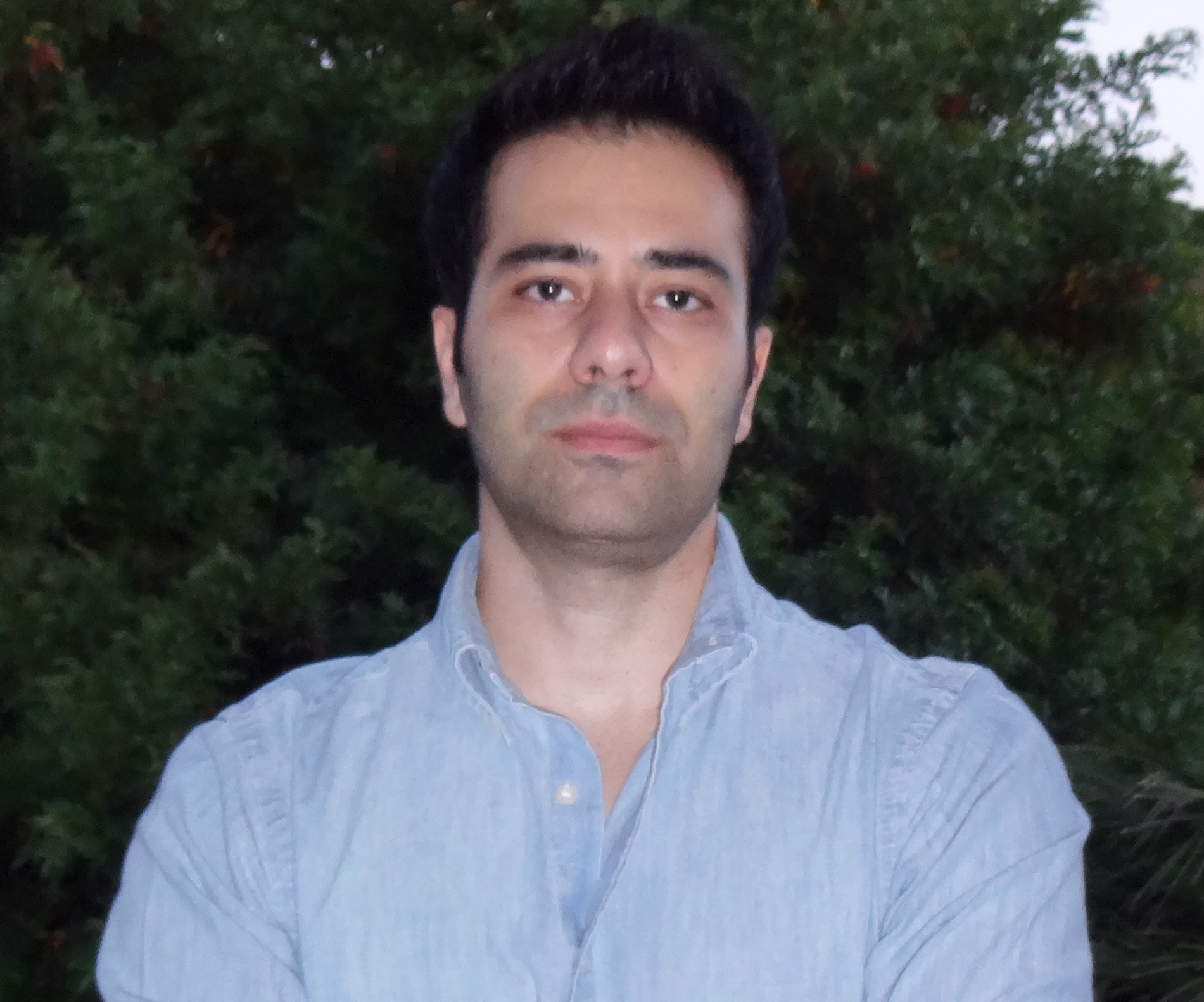}}]
{Salar Rahili}received the B.S. and the M.Sc. degrees in Electrical Engineering from Isfahan University of Technology, Isfahan, Iran, in 2009 and 2012, respectively. He is currently pursuing his Ph.D. degree in Electrical Engineering at the University of California, Riverside. His research focuses on distributed control of multi-agent systems, distributed optimization and game theory.
\end{IEEEbiography}
\begin{IEEEbiography}
[{\includegraphics[width=1in,height=1.25in,clip,keepaspectratio]{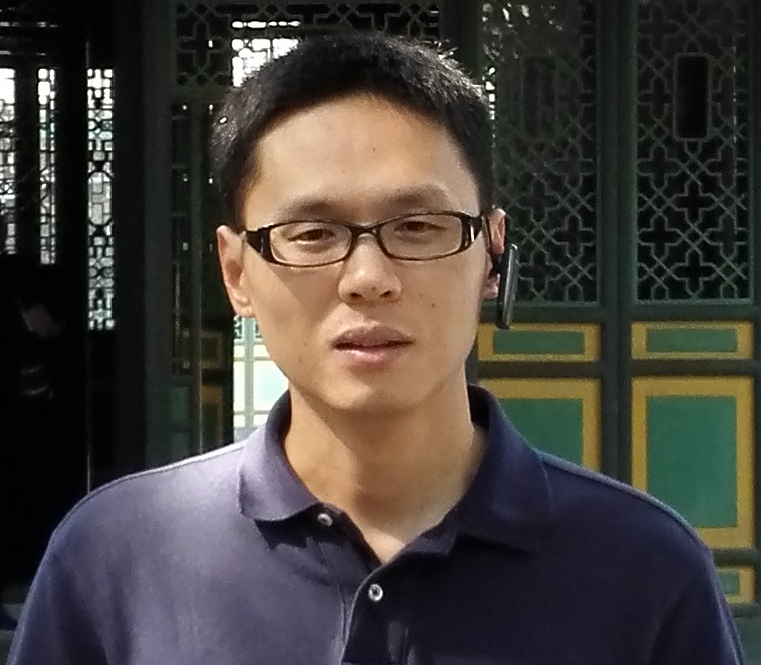}}]{Wei Ren}
is currently a Professor with the Department of Electrical and Computer Engineering, University of California, Riverside. He received the Ph.D. degree in Electrical Engineering from Brigham Young University, Provo, UT, in 2004. From 2004 to 2005, he was a Postdoctoral Research Associate with the Department of Aerospace Engineering, University of Maryland, College Park. He was an Assistant Professor (2005-2010) and an Associate Professor (2010-2011) with the Department of Electrical and Computer Engineering, Utah State University. His research focuses on distributed control of multi-agent systems and autonomous control of unmanned vehicles. Dr. Ren is an author of two books Distributed Coordination of Multi-agent Networks (Springer-Verlag, 2011) and Distributed Consensus in Multi-vehicle Cooperative Control (Springer-Verlag, 2008). He was a recipient of the National Science Foundation CAREER Award in 2008. He is currently an Associate Editor for Automatica, Systems and Control Letters, and IEEE Transactions on Control of Network Systems.
\end{IEEEbiography}

\end{document}